\theoremstyle{plain}
\newtheorem{thm}{Theorem}[section]
\newtheorem{cor}[thm]{Corollary}
\newtheorem{lem}[thm]{Lemma}
\newtheorem{pro}[thm]{Proposition}
\newtheorem{problem}[thm]{Problem}
\theoremstyle{definition}
\newtheorem{examp}[thm]{Example}
\newtheorem{defi}{Definition}
\newtheorem{remark}[thm]{Remark}
\def\ad{\text{\rm ad}}
\def\dim{\text{\rm dim}}
\author{Tsiu-Kwen Lee}
\title{Lie ideals and derivations of exceptional prime rings}
\date{}
\begin{document}

\maketitle

\centerline {Department of Mathematics, National Taiwan
University}

\centerline {Taipei, Taiwan}

\centerline {tklee@math.ntu.edu.tw}

\begin{abstract}\vskip6pt
\noindent A prime ring $R$ with extended centroid $C$ is said to be exceptional if both $\text{\rm char}\,R=2$ and $\dim_CRC=4$.
Herstein characterized additive subgroups $A$ of a  nonexceptional simple ring $R$ satisfying $\big[A, [R, R]\big]\subseteq A$.
In 1972 Lanski and Montgomery extended Herstein's theorem to nonexceptional prime rings.
In the paper we first extend Herstein's theorem to arbitrary simple rings.
For the prime case, let $R$ be an exceptional prime ring with center $Z(R)$.
It is proved that if $A$ is a noncentral additive subgroup of $R$  satisfying $\big[A, L\big]\subseteq A$
for some nonabelian Lie ideal $L$ of $R$, then
$\beta Z(R)\subseteq A$ for some nonzero $\beta\in Z(R)$, and either $AC=Ca+C$ for some $a\in A\setminus Z(R)$ with $a^2\in Z(R)$ or $[RC, RC]\subseteq AC$.
Secondly, we study certain generalized linear identities satisfied by Lie ideals and then completely characterize derivations $\delta, d$ of $R$ satisfying $\delta d(L)\subseteq Z(R)$ for $L$ a Lie ideal of $R$.
\end{abstract}

{ \hfill\break \noindent 2020 {\it Mathematics Subject Classification.}\ 16N60, 16W10.


\noindent {\it Key words and phrases:}\  (Exceptional) prime ring, simple ring, Lie ideal, derivation, extended centroid, differential identity.
\vskip6pt

\section{Introduction}
Throughout the paper, $R$ always denotes an associative ring, not necessarily with unity, with center $Z(R)$. For $a, b\in R$, let $[a, b]:=ab-ba$, the {\it additive commutator} of $a$ and $b$.
For additive subgroups $A, B$ of $R$, we let $AB$ (resp. $[A, B]$) stand for the additive subgroup of $R$ generated by all elements $ab$ (resp. $[a, b]$) for $a\in A$ and $b\in B$.
Every ring $R$ naturally associates a Lie ring structure with the {\it Lie product} $[x, y]$ of two elements $x, y\in R$.
An additive subgroup $L$ of $R$ is said to be a {\it Lie ideal} of $R$ if $[L, R]\subseteq L$. A Lie ideal $L$ of $R$ is called {\it abelian} if $[L, L]=0$. Otherwise, it is called {\it nonabelian}. Clearly, every additive subgroup $A$ of $R$ is a Lie ideal of $R$ if either $A\subseteq Z(R)$ or $[R, R]\subseteq A$.
A ring $R$ is called {\it simple} if $R^2\ne 0$ and the only ideals of $R$ are $\{0\}$ and itself.  A famous theorem due to Herstein characterizes Lie ideals of simple rings as follows (see \cite[Theorem 1.5]{herstein1969}).

\begin{thm} (Herstein)\label{thm10}
Let $R$ be a simple ring. Then, given a Lie ideal $L$ of $R$, either $[R, R]\subseteq L$ or $L\subseteq Z(R)$ unless $\text{\rm char}\,R=2$ and $\dim_{Z(R)}R=4$.
 \end{thm}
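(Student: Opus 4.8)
\medskip
\noindent\textbf{Proof sketch.}\ The plan is to show that a Lie ideal $L$ with $L\not\subseteq Z:=Z(R)$ must contain the whole commutator subgroup $[R,R]$ unless $\mathrm{char}\,R=2$ and $\dim_ZR=4$, using only the relation $[L,R]\subseteq L$ together with the primeness of $R$ (simple rings being prime). Fix $a\in L\setminus Z$; primeness is what will make the cancellations below legitimate. When $R$ has an identity it is moreover primitive, hence by the Jacobson density theorem dense on a vector space $V$ over a division ring $D$, with $R\cong M_n(D)$ when $\dim_DV=n<\infty$; I return to this matrix picture for the finite-dimensional endgame.

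\smallskip
\noindent The engine of the argument is a short list of elements forced to lie in $L$. Applying $[L,R]\subseteq L$ once or twice, for every $r\in R$ one gets $[a,r^2]\in L$, $\big[[a,r],r\big]\in L$, $[a,r^3]\in L$, $\big[[a,r],r^2\big]\in L$, and — writing $a\circ r:=ar+ra$ and observing $[a^2,r]=[a,\,a\circ r]$ — also $[a^2,r]\in L$, so that $a^2\in T(L):=\{x\in R:[x,R]\subseteq L\}$, itself a Lie ideal containing $L$. Now $[a,r^2]=[a,r]r+r[a,r]$, while $[a,r^2]\pm\big[[a,r],r\big]$ equal $2[a,r]r$ and $2r[a,r]$ respectively, and in characteristic $2$ one also has $[a,r^3]+\big[[a,r],r^2\big]=r[a,r]r$. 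Hence, for all $r,s\in R$: in characteristic $\ne 2$, $[a,r]r\in L$ and $r[a,r]\in L$, and (linearising $r\mapsto r+s$) $[a,r]s+[a,s]r\in L$ and $r[a,s]+s[a,r]\in L$; in characteristic $2$, $[a,r]r+r[a,r]\in L$ and $r[a,r]r\in L$.

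\smallskip
\noindent In characteristic $\ne 2$ one runs the standard chain of substitutions in these memberships (as in the treatments of Herstein and of Lanski–Montgomery), cancelling by primeness, to produce a nonzero two-sided ideal $I$ of $R$ with $[I,R]\subseteq L$; since $R$ is simple, $I=R$, so $[R,R]\subseteq L$. In the matrix picture $R\cong M_n(D)$ this is the routine check that a noncentral Lie ideal of $M_n(D)$ contains $[M_n(D),M_n(D)]$, and the case $\dim_DV=\infty$ reduces to it by passing to corners $eRe\cong M_{\mathrm{rk}\,e}(D)$: one uses that $eLe$ is a Lie ideal of $eRe$, picks an idempotent $e\in R$ of rank $\ge 3$ with $eae$ noncentral in $eRe$ (possible since $a$ does not act as a scalar), and sums $[eRe,eRe]\subseteq eLe\subseteq L$ — or else argues directly from the commutator identities when $R$ is too poor in idempotents.

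\smallskip
\noindent In characteristic $2$ the identities carrying a factor of $2$ are vacuous, so the argument must be rebuilt from $[a,r]r+r[a,r]\in L$, $r[a,r]r\in L$ and $a^2\in T(L)$; it still yields a nonzero ideal $I$ with $[I,R]\subseteq L$, hence $[R,R]\subseteq L$, unless the cancellations break down. By the structure theory of prime PI rings — the obstruction is exactly that $R$ satisfies a polynomial identity of degree $4$ — this breakdown occurs precisely when $R\cong M_2(F)$ for a field $F$ of characteristic $2$, or $R$ is a four-dimensional central division algebra of characteristic $2$, i.e.\ precisely when $\mathrm{char}\,R=2$ and $\dim_ZR=4$. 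This case is a genuine exception: for $R=M_2(F)$ with $\mathrm{char}\,F=2$, the additive subgroup $FI+Fe_{12}$ (with $e_{12}$ a matrix unit) is a Lie ideal satisfying $Z\subsetneq FI+Fe_{12}\subsetneq[R,R]$, and the division-algebra case admits an analogous intermediate Lie ideal. I expect the characteristic-$2$ analysis to be the main obstacle: the convenient ``$2$ is a unit'' manipulations disappear, the substitutions needed to extract information from $r[a,r]r\in L$ and $a^2\in T(L)$ become markedly more delicate, and one must verify by hand that the breakdown is confined to dimension $4$, not spreading to (say) $M_2$ over larger fields or to matrix rings of dimension $9$ in characteristic $2$.
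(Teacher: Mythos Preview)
The paper does not prove this statement: Theorem~\ref{thm10} is quoted as a classical result of Herstein, with a bare citation to \cite[Theorem 1.5]{herstein1969} and no accompanying argument. There is therefore nothing in the paper to compare your sketch against.

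As to the sketch itself, the opening moves are in the spirit of Herstein's original argument (generating elements like $[a,r]r$, $r[a,r]$, $r[a,r]r$ inside $L$ and then pushing toward a two-sided ideal), and the identity $[a,r^3]+[[a,r],r^2]=r[a,r]r$ in characteristic~$2$ is correct. But two features are worth flagging. First, the detour through Jacobson density, matrix corners $eRe$, and ``passing to rank~$\ge 3$ idempotents'' is not how Herstein's proof for simple rings runs and is not needed; a simple ring need not have nontrivial idempotents at all, and the reduction you describe is more in the style of primitive-ring arguments than the purely ring-theoretic one in \cite{herstein1969}. Second, the characteristic~$2$ case is where all the work lies, and your account there is essentially a promissory note: ``it still yields a nonzero ideal $I$ with $[I,R]\subseteq L$\ldots unless the cancellations break down'' together with an appeal to PI structure theory to localise the breakdown to $\dim_ZR=4$. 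Herstein's actual argument does not invoke PI theory; it proceeds by explicit commutator manipulations that isolate the $4$-dimensional obstruction directly. If you intend to reconstruct the proof rather than cite it, that is the part that needs to be written out.
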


 The following generalization of Theorem \ref{thm10} was also proved by Herstein (see the proof below \cite[Theorem 1.13]{herstein1969}).

\begin{thm} (Herstein)\label{thm20}
Let $R$ be a simple ring. If $A$ is an additive subgroup of $R$ satisfying $\big[A, [R, R]\big]\subseteq A$, then either $A\subseteq Z(R)$ or $[R, R]\subseteq A$ unless $\text{\rm char}\,R=2$ and $\dim_{Z(R)}R=4$.
 \end{thm}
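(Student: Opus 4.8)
The plan is to reduce the assertion, by a brief commutator computation, to the classical structure theory of the Lie ring $[R,R]$ for non-exceptional simple rings. Throughout set $U:=[R,R]$; by the Jacobi identity $U$ is a Lie ideal of $R$. Since $R$ is simple, hence prime, the condition $[R,R]\subseteq Z(R)$ would force $R$ to be commutative, so either $R$ is commutative --- in which case $A\subseteq R=Z(R)$ and the first alternative holds --- or $U$ is a \emph{non-central} Lie ideal of $R$; assume the latter.

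First I would dispose of the case $[A,U]\subseteq Z(R)$: then every $a\in A$ satisfies $[a,U]\subseteq Z(R)$ for the non-central Lie ideal $U$, and the standard fact that this already forces $a\in Z(R)$ (a fact that genuinely uses the non-exceptionality hypothesis) gives $A\subseteq Z(R)$, again the first alternative. So assume $[A,U]\not\subseteq Z(R)$ and put $B:=A\cap U$. Then $[B,U]\subseteq[A,U]\subseteq A$ and $[B,U]\subseteq[U,U]\subseteq U$, hence $[B,U]\subseteq A\cap U=B$: thus $B$ is a Lie ideal of the Lie ring $U$. Moreover $[A,U]\subseteq A$ and $[A,U]\subseteq[R,U]\subseteq U$, so $[A,U]\subseteq B$, and therefore $B\not\subseteq Z(R)$. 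So $B$ is a \emph{non-central} Lie ideal of the Lie ring $U$.

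It then remains to prove that a non-central Lie ideal $B$ of the Lie ring $U=[R,R]$ must be all of $U$; applied to the $B$ just constructed this gives $[R,R]=U=B\subseteq A$, the second alternative, completing the proof. For this I would invoke Herstein's structure theory for non-exceptional simple rings, by which a Lie ideal of the Lie ring $[R,R]$ is either contained in $Z(R)$ or equal to $[R,R]$. Concretely: $U/(U\cap Z(R))$ is a simple Lie ring, so the image of $B$ there --- a non-zero Lie ideal of a simple Lie ring --- is everything, i.e.\ $B+(U\cap Z(R))=U$; as $U\cap Z(R)$ is central, $[U,U]=[B,B]\subseteq B$; and since $[R,R]$ is a perfect Lie ring in the non-exceptional case, $[[R,R],[R,R]]=[R,R]$, we conclude $U=[U,U]\subseteq B$, whence $B=U$.

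\textbf{The main obstacle}, and the only place where non-exceptionality is truly needed, is this last step, and it is concentrated in the need to control the central part $U\cap Z(R)$. This part can be non-zero --- for $R=M_n(\GF(p))$ with $p\mid n$ one has $I\in[R,R]$ --- and the two ingredients used, the Lie-simplicity of $U/(U\cap Z(R))$ and the perfectness $[[R,R],[R,R]]=[R,R]$, both fail exactly when $R$ is exceptional: for $R=M_2(\GF(2))$ the Lie ring $[R,R]/([R,R]\cap Z(R))$ is two-dimensional abelian, and $A=\GF(2)E_{12}+\GF(2)I$ (with $E_{12}^2=0\in Z(R)$) is a genuine counterexample. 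The subsidiary facts used above --- that $[R,R]\not\subseteq Z(R)$ for noncommutative prime $R$, and that $[a,U]\subseteq Z(R)$ forces $a\in Z(R)$ --- likewise rest on primeness and non-exceptionality, but are routine consequences of the standard Lie-ideal machinery; the heart of the matter is the Lie-simplicity and perfectness of $[R,R]$ modulo its centre.
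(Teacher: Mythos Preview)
The paper does not supply its own proof of this statement; it attributes the result to Herstein and points the reader to the argument below \cite[Theorem~1.13]{herstein1969}. Your proposal is correct and is essentially that argument: the reduction to $B=A\cap[R,R]$ is exactly how one passes from Herstein's Theorem~1.13 (which treats Lie ideals of the Lie ring $[R,R]$, i.e.\ the case $A\subseteq[R,R]$) to arbitrary additive subgroups $A\subseteq R$, and the two structural inputs you invoke---Lie simplicity of $[R,R]/([R,R]\cap Z(R))$ and perfectness $[[R,R],[R,R]]=[R,R]$---are precisely what Herstein establishes in the non-exceptional case on the way to Theorem~1.13. (Perfectness, incidentally, follows from Theorem~\ref{thm10} of the present paper applied to the Lie ideal $[[R,R],[R,R]]$ together with Lemma~\ref{lem20}, and your ``standard fact'' that $[a,[R,R]]\subseteq Z(R)$ forces $a\in Z(R)$ follows by the same route via $[a,[[R,R],[R,R]]]=0$.) So your approach coincides with the one the paper cites.
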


 In \cite{lee2025} the author and Lin characterized Lie ideals of arbitrary simple rings.

\begin{thm} (\cite[Theorem 4.6]{lee2025})\label{thm14}
Let $R$ be a simple ring, and let $L$ be a Lie ideal of $R$. Then
$L\subseteq Z(R)$, $L=Z(R)a+Z(R)$ for some $a\in L\setminus Z(R)$, or $[R, R]\subseteq L$.
\end{thm}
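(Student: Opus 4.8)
The plan is to combine Herstein's Theorem~\ref{thm10} with a direct analysis of $2\times 2$ matrices over an algebraically closed field of characteristic~$2$. If $R$ is \emph{not} exceptional --- that is, if it is not the case that $\operatorname{char}R=2$ and $\dim_{Z(R)}R=4$ --- then Theorem~\ref{thm10} applies verbatim and yields $L\subseteq Z(R)$ or $[R,R]\subseteq L$, so there is nothing more to prove. Assume henceforth that $R$ is exceptional. Then $F:=Z(R)$ is a field of characteristic~$2$ and $R$, being $4$-dimensional over $F$, is a central simple $F$-algebra; in particular $R$ has an identity element. Fix an algebraic closure $K$ of $F$; since $R$ splits over $K$ there is a $K$-algebra isomorphism $R_K:=R\otimes_FK\cong M_2(K)$, under which $Z(R_K)=K\cdot 1$. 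If $L\subseteq Z(R)$ we are in the first alternative of the theorem, so assume from now on that $L\not\subseteq Z(R)$ and fix $a\in L\setminus Z(R)$.

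The heart of the argument is a base change to $R_K$ together with the following lemma, to be proved by a direct computation. \emph{Let $k$ be an algebraically closed field of characteristic~$2$ and let $M$ be a Lie ideal of $M_2(k)$ with $M\not\subseteq k\cdot 1$. Then either $[M_2(k),M_2(k)]\subseteq M$, or $\dim_kM=2$, $k\cdot 1\subseteq M$, $M=k\cdot 1+kx$ for some $x$ with $x^2\in k\cdot 1$, and every element of $M$ squares into $k\cdot 1$.} To apply it, note that $L_K:=L\otimes_FK$ is a Lie ideal of $R_K$ (extend $[L,R]\subseteq L$ by $K$-bilinearity), that $\dim_KL_K=\dim_FL$, and that $L_K\not\subseteq Z(R_K)$ because $a\in L_K$ while $a\notin Z(R_K)$ (as $R\cap Z(R_K)=Z(R)$). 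Applying the lemma to $M=L_K$ and using that for any $F$-subspace $V\subseteq R$ one has $(V\otimes_FK)\cap R=V$ and $[R_K,R_K]=[R,R]\otimes_FK$, the two cases descend to $R$: either $[R,R]=[R_K,R_K]\cap R\subseteq L_K\cap R=L$, giving the third alternative; or $1\in L_K\cap R=L$, so that $Z(R)=F\cdot 1\subseteq L$ and $\dim_FL=2$, whence $L=Z(R)+Fa=Z(R)a+Z(R)$, the second alternative, with moreover $a^2\in(K\cdot 1)\cap R=Z(R)$.

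It remains to prove the lemma, where essentially all the work lies. I would argue on $\dim_kM$, using throughout that for $b\in M_2(k)\setminus k\cdot 1$ the centralizer $C_{M_2(k)}(b)=k[b]$ is $2$-dimensional, so that $[b,M_2(k)]$ is $2$-dimensional and is contained in $M$ whenever $b\in M$. If $\dim_kM\le 1$, then $M=kb$ with $[b,M_2(k)]\subseteq kb$, forcing $b\in k\cdot 1$ and contradicting $M\not\subseteq k\cdot 1$. If $\dim_kM=2$, pick $b\in M\setminus k\cdot 1$; then $[b,M_2(k)]=M$. Over $k$ the element $b$ is either regular semisimple, in which case an eigenbasis of $b$ gives $M=ke_{12}+ke_{21}$, which is not Lie-stable because $[e_{12},e_{21}]=e_{11}-e_{22}=1$ in characteristic~$2$ and $1\notin M$, a contradiction; or $b=\lambda\cdot 1+n$ with $n\ne 0$ nilpotent, in which case $[b,M_2(k)]=[n,M_2(k)]=kn+k\cdot 1$, so $M=kn+k\cdot 1=kb+k\cdot 1\ni 1$ and $b^2=\lambda^2\cdot 1\in k\cdot 1$; a short computation then shows every element of such an $M$ squares into $k\cdot 1$. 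Finally, if $\dim_kM\ge 3$, then, since $[M_2(k),M_2(k)]=\mathfrak{sl}_2(k)$ is $3$-dimensional, either $\mathfrak{sl}_2(k)\subseteq M$ and we are done, or $M\ne\mathfrak{sl}_2(k)$, so that $M\cap\mathfrak{sl}_2(k)$ is a $2$-dimensional Lie ideal, hence contains $1$ by the previous case; choosing $y\in M$ with $\operatorname{tr}y\ne 0$ (possible since $M\not\subseteq\ker\operatorname{tr}=\mathfrak{sl}_2(k)$), the element $y$ is regular semisimple because its characteristic polynomial has discriminant $\operatorname{tr}(y)^2\ne 0$, yet $[y,M_2(k)]\subseteq M\cap\mathfrak{sl}_2(k)$ is itself $2$-dimensional, hence equals $M\cap\mathfrak{sl}_2(k)$ and contains $1$ --- contradicting that $[y,M_2(k)]$, spanned by root vectors in an eigenbasis of $y$, cannot contain $1$. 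So $\dim_kM\ge 3$ forces $[M_2(k),M_2(k)]\subseteq M$, which completes the lemma and with it the theorem.

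The step I expect to be the main obstacle is the lemma, and inside it the characteristic-$2$ accident $e_{11}-e_{22}=1\in\mathfrak{sl}_2(k)$: this single relation is what both creates the new $2$-dimensional Lie ideals $k\cdot 1+kn$ --- the source of the exceptional alternative $Z(R)a+Z(R)$ --- and rules out every naive $2$- or $3$-dimensional candidate from being a Lie ideal, so in each dimension one must keep careful track of which commutators $[e_{ij},e_{kl}]$ land back inside $M$. By comparison, Herstein's theorem, the splitting of a central simple algebra over an algebraic closure, and the descent of subspaces along a field extension are all routine.
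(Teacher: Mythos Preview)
This theorem is cited from \cite{lee2025} and not proved in the present paper, so there is no proof here to compare against directly. Your lemma on Lie ideals of $\M_2(k)$ for $k$ algebraically closed of characteristic~$2$ is correct and the case analysis is clean. The gap lies in the descent: forming $L_K=L\otimes_FK$ and asserting $\dim_KL_K=\dim_FL$ presupposes that $L$ is a $Z(R)$-subspace of $R$, but a Lie ideal is only an additive subgroup with $[L,R]\subseteq L$. For a concrete failure, take $R=\M_2(\GF(4))$ and $L=[R,R]+{\Bbb Z}_2e_{11}$; then $[L,R]\subseteq[R,R]\subseteq L$, so $L$ is a Lie ideal, yet $e_{11}\in L$ while $\alpha e_{11}\notin L$ for $\alpha\in\GF(4)\setminus{\Bbb Z}_2$ (since $\alpha e_{11}\in L$ would force $(\alpha+\epsilon)e_{11}\in[R,R]$ for some $\epsilon\in{\Bbb Z}_2$, impossible as $e_{11}$ has nonzero trace). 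Hence $L\otimes_FK$ is not even defined, and your dimension count collapses.

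The repair is to run your lemma on $[L,R]$ rather than on $L$: this \emph{is} an $F$-subspace (because $\alpha[\ell,r]=[\ell,\alpha r]$), is a Lie ideal, and is noncentral whenever $L$ is (Lemma~\ref{lem8}(ii)). If the lemma yields $[R,R]\subseteq[L,R]$ you get $[R,R]\subseteq L$ at once. If it yields $[L,R]=Fb+F$ with $b^2\in F$, then for each noncentral $\ell\in L$ the $2$-dimensional $F$-space $[\ell,R]$ must equal $Fb+F\ni 1$; this forces the reduced trace of $\ell$ to vanish (otherwise $\ell$ is regular semisimple over $K$ and $1\notin[\ell,R_K]$), whence $[\ell,R]=F\ell+F$ and $\ell\in Fb+F$. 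Combined with $Fb+F=[L,R]\subseteq L$ this gives $L=Fb+F=Z(R)b+Z(R)$. So the theorem does follow along your lines, but the passage from $[L,R]$ back up to $L$ is a genuine additional argument that your base-change shortcut elided.
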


 A ring $R$ is called {\it prime} if, given $a, b\in R$,
 $aRb=0$ implies that either $a=0$ or $b=0$. Given a  prime ring $R$, let $Q_s(R)$ be the Martindale symmetric ring of quotients of $R$. It is known that $Q_s(R)$ is also a prime ring, and its center, denoted by $C$, is a field, which is called the {\it extended centroid} of $R$. The extended centroid $C$ will play an important role in our present study. See \cite{beidar1996} for details.\vskip6pt

\begin{defi}\label{defi1}
A prime ring $R$ is called {\it exceptional} if both $\text{\rm char}\,R=2$ and $\dim_CRC=4$.
Otherwise, it is called {\it nonexceptional}.
\end{defi}

A Lie ideal of a ring $R$ is called {\it proper} if it is of the form $[M, R]$ for some nonzero ideal $M$ of $R$.
Lanski and Montgomery characterized additive subgroups of nonexceptional prime rings, which are invariant under Lie ideals as follows (see \cite[Theorem 13]{lanski1972}).

\begin{thm} (Lanski and Montgomery 1972)\label{thm15}
Let $L$  be a Lie ideal of a nonexceptional prime ring $R$, and let $A$ be an additive subgroup
of $R$. If $[A, L]\subseteq A$, then $L\subseteq Z(R)$, $A\subseteq Z(R)$, or $A$ contains a proper Lie ideal of $R$.
\end{thm}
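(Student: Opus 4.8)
\emph{Proof proposal.} The three conclusions correspond to the cases $L\subseteq Z(R)$, $[A,L]\subseteq Z(R)$ and $[A,L]\not\subseteq Z(R)$. The plan is to first reduce the hypothesis on $L$ to that of a proper Lie ideal, then to observe that $[A,L]$ is itself a Lie ideal of $L$ lying inside $A$, and finally to invoke the structure theory of Lie ideals of (noncentral) Lie ideals of nonexceptional prime rings. Note that the statement for simple $R$ is precisely Theorem \ref{thm20}.

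If $L\subseteq Z(R)$ there is nothing to prove, so assume $L\not\subseteq Z(R)$. Since $R$ is nonexceptional, the classical structure theorem for noncentral Lie ideals of prime rings (extending Theorem \ref{thm10} to the prime case) provides a nonzero ideal $M$ with $0\ne[M,R]\subseteq L$ and $[M,R]\not\subseteq Z(R)$; as $[A,[M,R]]\subseteq[A,L]\subseteq A$, we may replace $L$ by $[M,R]$, so from now on $L=[M,R]$ is a proper Lie ideal.

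Put $V:=[A,L]$. Then $V\subseteq A$, and $V=[A,L]\subseteq[R,L]\subseteq L$, so $V\subseteq A\cap L$; moreover $V$ is a Lie ideal of $L$, i.e.\ $[V,L]\subseteq V$. This is a Jacobi computation: for $a\in A$, $l,l'\in L$ one has $[[a,l],l']=[[a,l'],l]+[a,[l,l']]$, where $[a,l']\in[A,L]\subseteq A$ gives $[[a,l'],l]\in[A,L]=V$ and $[l,l']\in[L,L]\subseteq L$ gives $[a,[l,l']]\in[A,L]=V$. If $V\subseteq Z(R)$, I would deduce $A\subseteq Z(R)$ thus: the set $S:=\{x\in R:[x,L]\subseteq Z(R)\}$ is an additive subgroup containing $Z(R)$ and $A$, and it is a Lie ideal of $R$, because if $[x,L]\subseteq Z(R)$ then for all $l\in L$, $r\in R$, $[[x,r],l]=[[x,l],r]+[x,[r,l]]=[x,[r,l]]\in Z(R)$ (using $[x,l]\in Z(R)$ and $[r,l]\in L$), so $[x,r]\in S$. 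If $S$ were noncentral it would contain a proper Lie ideal $[N,R]$, whence $[[N,R],[M,R]]\subseteq Z(R)$ for two noncentral Lie ideals of a nonexceptional prime ring; a routine substitution argument (replace an argument $r$ by $rt$ and compare) forces one of them to be central, which is impossible. Hence $S\subseteq Z(R)$ and $A\subseteq Z(R)$.

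It remains to handle $V=[A,L]\not\subseteq Z(R)$. Here $V$ is a \emph{noncentral Lie ideal of the noncentral Lie ideal} $L=[M,R]$, and the crux---the step I expect to be the main obstacle---is to show that such a $V$ must contain a proper Lie ideal $[N,R]$ of $R$; granting this, $A\supseteq V\supseteq[N,R]$ and we are done. Nonexceptionality is essential here: it is what ensures (via Theorem \ref{thm10} and its companions) that noncentral Lie ideals are nonabelian, hence that the ``Lie ideals of Lie ideals'' machinery is available. Concretely, one first checks $[V,V]\ne0$---otherwise $\ad_a^2$ annihilates $[M,R]$ for some noncentral $a\in V$, forcing $a\in Z(R)$, a contradiction---and then runs the Herstein--Lanski commutator calculus relative to $M$: one expands brackets such as $[v,[m,rm']]$ and $[v,[m,m'r]]$ for $v\in V$, $m,m'\in M$, $r\in R$, all of which lie in $V$, and plays them against one another to extract a nonzero ideal $N$ with $[N,R]\subseteq V$. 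Assembling the three cases yields the theorem.
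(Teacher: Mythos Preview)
The paper does not prove Theorem~\ref{thm15}; it is quoted as a known result with the citation \cite[Theorem 13]{lanski1972}. So there is no in-paper proof to compare against, and your proposal should be read as an attempted reconstruction of the Lanski--Montgomery argument.

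Your overall architecture is sound and matches the original: reduce to $L=[M,R]$, set $V=[A,L]$, show $V$ is a Lie ideal of $L$, dispose of the case $V\subseteq Z(R)$ via the ``$[K,L]\subseteq Z(R)\Rightarrow K$ or $L$ central'' lemma (this is exactly Lemma~\ref{lem8}(v) here), and in the remaining case show that a noncentral Lie ideal of $L$ contains a proper Lie ideal of $R$. Two points deserve tightening. First, your argument that $[V,V]\ne 0$ via ``$\ad_a^2$ annihilates $[M,R]$, forcing $a\in Z(R)$'' is fine in characteristic $\ne 2$, but in characteristic $2$ one only gets $[a^2,[M,R]]=0$, hence $a^2\in Z(R)$; concluding $a\in Z(R)$ genuinely requires nonexceptionality and a separate lemma (this is precisely where Lanski--Montgomery spend effort, and where the exceptional case breaks). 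Second, the ``commutator calculus relative to $M$'' that extracts $[N,R]\subseteq V$ from $[V,V]\ne 0$ is the substantive part of \cite{lanski1972}; your description (``expand $[v,[m,rm']]$, $[v,[m,m'r]]$ and play them against one another'') is the right shape but is not yet a proof---one needs the intermediate facts that $V$ contains $[J,L]$ for a nonzero ideal $J$ of the subring generated by $L$, and then that this forces a nonzero two-sided ideal of $R$ into the picture. As written, your final paragraph is an outline of the Lanski--Montgomery machinery rather than an independent argument.
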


Due to Theorem \ref{thm15}, we focus on the structure of Lie ideals of exceptional prime rings.
The first purpose of the paper is to study the following.

\begin{problem}\label{problem3}
Let $R$ be an exceptional prime ring, and let $L$ be a nonabelian Lie ideal of $R$. Characterize additive subgroups $A$ of $R$
satisfying $\big[A, L\big]\subseteq A$.
\end{problem}

 The following characterizes Lie ideals of exceptional prime rings.

\begin{lem}\label{lem19}
Let $L$ be a noncentral Lie ideal of an exceptional prime ring $R$.

(i)\ If $L$ is abelian, then $LC=[a, RC]=Ca+C$ for any $a\in L\setminus Z(R)$.

(ii)\ If  $L$ is nonabelian, then $L$ contains a proper Lie ideal of $R$. In this case, $LC$ is equal to either $[RC, RC]$ or $RC$.
\end{lem}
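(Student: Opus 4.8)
The plan is to push everything up to the simple algebra $RC=Q_s(R)$ and invoke Theorem \ref{thm14}. First I record the structure of $RC$: since $\dim_C RC=4<\infty$, $RC$ is a central simple $C$-algebra, and in fact $RC=Q_s(R)$. Base change to an algebraic closure $\overline{C}$ identifies $RC\otimes_C\overline{C}$ with $M_2(\overline{C})$; since $[M_2(\overline{C}),M_2(\overline{C})]=\mathfrak{sl}_2(\overline{C})$ (and in characteristic $2$ this is the $3$-dimensional trace-zero subspace, which contains non-commuting matrices such as $e_{12},e_{21}$ with $[e_{12},e_{21}]\ne 0$), we get $\dim_C[RC,RC]=3$ and $[RC,RC]$ is a \emph{nonabelian} Lie ideal of $RC$. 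Likewise, for any $a\in RC\setminus C$ the element $a\otimes 1$ is a non-scalar $2\times 2$ matrix, so $\dim_C C_{RC}(a)=2$; since $C+Ca\subseteq C_{RC}(a)$ and $\dim_C(C+Ca)=2$ we conclude $C_{RC}(a)=C+Ca$ and $\dim_C[a,RC]=4-\dim_C C_{RC}(a)=2$.

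Next, because $[L,R]\subseteq L$ and $C$ is central in $RC$, we have $[LC,RC]=[L,R]C\subseteq LC$, so $LC$ is a Lie ideal of the simple ring $RC$; and $LC$ is noncentral, since $Z(R)=R\cap C$ and $L\not\subseteq Z(R)$ force $LC\not\subseteq C$. Applying Theorem \ref{thm14} to $RC$, either $LC=Cb+C$ for some $b\in LC\setminus C$, or $[RC,RC]\subseteq LC$.

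For (i), if $L$ is abelian then $LC$ is abelian, so $[RC,RC]\subseteq LC$ is impossible (the former is nonabelian) and hence $LC=Cb+C$. Fix any $a\in L\setminus Z(R)$. From $[a,R]\subseteq L$ we get $[a,RC]=[a,R]C\subseteq LC$; since $a\in LC$ and $LC$ is abelian, $[a,[a,RC]]=0$, i.e. $[a,RC]\subseteq C_{RC}(a)=C+Ca$. Comparing dimensions, $\dim_C[a,RC]=2=\dim_C(C+Ca)$, so $[a,RC]=C+Ca\subseteq LC$. Conversely, for $x\in LC\setminus C$ we have $[a,x]\in[LC,LC]=0$, hence $x\in C_{RC}(a)=C+Ca$; therefore $LC\subseteq C+Ca$. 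Combining, $LC=[a,RC]=Ca+C$, as asserted.

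For (ii), if $L$ is nonabelian then $[LC,LC]=[L,L]C\ne 0$, so $LC$ is nonabelian; thus $LC\ne Cb+C$, which forces $[RC,RC]\subseteq LC\subseteq RC$, and since $3=\dim_C[RC,RC]\le\dim_C LC\le\dim_C RC=4$, there is no intermediate value available and $LC$ equals $[RC,RC]$ or $RC$. It then remains to produce a nonzero ideal $M$ of $R$ with $[M,R]\subseteq L$, i.e. a proper Lie ideal contained in $L$. The plan here is the usual commutator calculus for Lie ideals of prime rings: pick $u,v\in L$ with $w:=[u,v]\ne 0$, use the derivation identity $\mathrm{ad}_w(rs)=\mathrm{ad}_w(r)s+r\,\mathrm{ad}_w(s)$ together with $[w,R]\subseteq L$ to force products such as $w[w,R]$ and $[w,R]w$ (and their iterates) into $L$, and then invoke primeness of $R$ — with the case $w\in Z(R)$, where $wR$ is already a nonzero ideal, treated separately — to manufacture $M$. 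This is the same (characteristic-free) mechanism underlying the corresponding assertions in \cite{lanski1972} and \cite{lee2025}. I expect this last step — descending the relation $[RC,RC]\subseteq LC$ to an honest ideal of $R$ while controlling extended-centroid coefficients and keeping everything inside $L$ rather than only inside $LC$ — to be the main technical obstacle; by contrast, the trichotomy for $LC$ is immediate once Theorem \ref{thm14} and the dimension counts of the first paragraph are available.
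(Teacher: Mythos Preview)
Your argument for (i), and for the trichotomy $LC\in\{[RC,RC],\,RC\}$ in (ii), is correct; passing to the simple algebra $RC$, invoking Theorem~\ref{thm14}, and counting dimensions via the centralizer is a clean self-contained alternative to the paper's citations of \cite{lee2025}.

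The genuine gap is the ``$L$ contains a proper Lie ideal'' clause of (ii), which you leave as a sketch and, more seriously, misdiagnose. You frame it as a descent problem---pushing $[RC,RC]\subseteq LC$ back down to $R$ while ``controlling extended-centroid coefficients''---but no descent is involved and $C$ plays no role here. The paper's argument (Lemma~\ref{lem8}\,(iv)) lives entirely in $R$: since $L$ is nonabelian, $I:=R[L,L]R$ is a nonzero ideal of $R$, and the standard identity $R[L,L]R\subseteq L+L^2$ (cited to \cite{lee2022}) gives $I\subseteq\overline{L}$; then, because $[J,\overline{A}]=[J,A]$ for any ideal $J$ and additive subgroup $A$ (Lemma~\ref{lem8}\,(iii)), one gets $[I,R]\subseteq[\overline{L},R]=[L,R]\subseteq L$. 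That is the whole proof.

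Your proposed mechanism---fix a single $w=[u,v]$, use $[w,R]\subseteq L$ and the Leibniz rule to force $w[w,R]$, $[w,R]w$, etc.\ into $L$, then invoke primeness---does not assemble into an ideal without further input, and the separate case $w\in Z(R)$ (where $[w,R]=0$) gives nothing at all. The right move is to take the whole ideal $R[L,L]R$ at once rather than build up from one commutator; once you do that, what you flagged as ``the main technical obstacle'' becomes a one-line citation.
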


\begin{proof}
(i)\ It follows directly from \cite[Lemmas 6.1 and 4.3]{lee2025}.

(ii)\ By Lemma \ref{lem8} (iv) in the next section, $[I, R]\subseteq L$, where $I:=R[L, L]R$, a nonzero ideal of $R$. Thus $L$ contains a proper Lie ideal of $R$. Since $IC=RC$ and $\dim_CRC=4$, it follows that
either $LC=[RC, RC]$ or $LC=RC$.
\end{proof}

\begin{defi}\label{defi2}
Let $R$ be an exceptional prime ring. We say that a nonabelian Lie ideal $L$ of $R$ is of Type I (resp. Type II) if $LC=[RC, RC]$ (resp. $LC=RC$).
\end{defi}

Due to Lemma \ref{lem19}, Problem \ref{problem3} is solved in Theorem \ref{thm16} for exceptional simple rings, Theorem \ref{thm19} for $L$ nonabelian of Type I and Theorem \ref{thm23}  for $L$ nonabelian of Type II.
In particular, the following are
two main theorems, i.e., Theorems \ref{thm16} and \ref{thm19}.
We will use ``iff" for ``if and only" in the text.

\vskip6pt
\noindent{\bf Theorem A.}
{\it Let $R$ be an exceptional simple ring, and let $A$ be a noncentral additive subgroup of $R$.
Then $\big[A, [R, R]\big]\subseteq A$ iff either $Z(R)\subseteq A\subseteq [R, R]$ or $[R, R]\subseteq A$.}
\vskip6pt

\vskip6pt
\noindent{\bf Theorem B.}
{\it Let $R$ be an exceptional prime ring, and let $A$ be a noncentral additive subgroup of $R$. Suppose that $\big[A, L\big]\subseteq A$, where $L$ is a nonabelian Lie ideal of $R$. Then the following hold:

(i)\ $\beta Z(R)\subseteq A$ for some nonzero $\beta\in Z(R)$;

(ii)\ Either $AC=Ca+C$ for some $a\in A\setminus Z(R)$ with $a^2\in Z(R)$ or $[RC, RC]\subseteq AC$.}
\vskip6pt

In Theorem B (ii), if $[RC, RC]\subseteq AC$, it is natural to ask whether $A$ contains a proper Lie ideal of $R$. This is generally not true even when $AC=[RC, RC]$
(see Example \ref{examp3}).

The study of Lie ideals of prime rings is also rather related to that of linear identities satisfied by Lie ideals (acted by derivations).
By a {\it derivation} of a ring $R$ we mean an additive map $d\colon R\to R$ satisfying $d(xy)=xd(y)+d(x)y$ for all $x, y\in R$.
Given an element $a\in R$, the map $x\mapsto [a, x]$ for $x\in R$ is a derivation, which is called the {\it inner} derivation induced by the element $a$. We denote this derivation by $\text{\rm ad}_a$, that is,
$\text{\rm ad}_a(x)=[a, x]$ for all $x\in R$.
In a prime ring $R$ of characteristic not two, Posner proved: (i) If $\delta$ and $d$ are derivations of $R$ such that $\delta d$ is also a derivation of $R$, then one of $\delta$ and $d$ is zero (\cite[Theorem 1]{posner1957}); (ii) If $d$ is a nonzero derivation of $R$ such that $[d(x), x]\in Z(R)$ for all $x\in R$, then $R$ is commutative (\cite[Theorem 2]{posner1957}). For the case of Lie ideals, we refer the reader to \cite{bergen1981, lee1981, lee1983, ke1985}. In particular, we mention the following.

\begin{thm}\label{thm26}
Let $R$ be a prime ring with nonzero derivations $\delta$ and $d$, and let $L$ be a Lie ideal of $R$. Suppose that $\delta d(L)\subseteq Z(R)$.

(i) If $\text{\rm char}\,R\ne2$, then $L\subseteq Z(R)$ (\cite[Theorem 4]{lee1983});

(ii) If $\text{\rm char}\,R=2$ and $\dim_CRC>4$, then either $L\subseteq Z(R)$ or $\delta=\lambda d$ some some $\lambda\in C$ (\cite[Main Theorem, p.274]{ke1985}).
\end{thm}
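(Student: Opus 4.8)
Both parts are already in the literature --- (i) is \cite[Theorem~4]{lee1983} and (ii) is \cite[Main Theorem]{ke1985} --- but let me outline the route I would take to recover them uniformly. The plan is to push the hypothesis into the symmetric Martindale quotient ring $Q:=Q_s(R)$ and then apply the theory of differential and generalized polynomial identities (see \cite{beidar1996}). Suppose $L\not\subseteq Z(R)$. First I would use the structure of Lie ideals in nonexceptional prime rings --- the characteristic is $\ne2$ in (i), and $\dim_CRC>4$ in (ii), so in either case $R$ is nonexceptional --- to find a nonzero ideal $I$ of $R$ with $0\ne[I,R]\subseteq L$; cf.\ Lemma \ref{lem8}. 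In particular $R$, hence $Q$, is noncommutative. Extending $\delta$ and $d$ uniquely to derivations of $Q$ and using the density of $I$, the hypothesis $\delta d(L)\subseteq Z(R)$ turns into the identity
\[
\delta d([x,y])=[\delta d(x),y]+[d(x),\delta(y)]+[\delta(x),d(y)]+[x,\delta d(y)]\in C\qquad(x,y\in Q).
\]

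The second step is to show, via Kharchenko's theorem, that $\delta$ and $d$ are both $C$-inner. If not, then after substituting for the images $d(x),\delta(x),\delta d(x)$ (and their analogues in $y$) the independent indeterminates permitted by Kharchenko's theorem, and then specialising all but two of them to $0$, the displayed relation degenerates either to $[u,v]\in C$ for independent $u,v\in Q$ or to the assertion that a nonzero inner derivation has central image on the noncentral Lie ideal $[Q,Q]$ --- both impossible over a noncommutative prime ring. Hence $\delta=\mathrm{ad}_a$ and $d=\mathrm{ad}_b$ for some $a,b\in Q\setminus C$, and the hypothesis becomes the generalized polynomial identity
\[
[a,[b,[x,y]]]\in C\qquad(x,y\in Q).
\]

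The third and hardest step is to analyse this identity. By Martindale's theorem $Q$ is a primitive ring with nonzero socle and finite-dimensional commuting division ring, so after passing to a suitable matrix ring (extending $C$ to a splitting field if necessary) one can compute directly. I would aim to prove that $[a,[b,[x,y]]]\in C$ for all $x,y$ forces $a\in Cb+C$; separating the nontrivial generalized polynomial identities from the trivial ones, and coping with the degenerate behaviour of $[RC,RC]$ in characteristic two (and with small characteristics dividing the matrix size), is the main obstacle, and it is precisely the analysis of the ``generalized linear identities satisfied by Lie ideals'' advertised in the abstract. Granting this, write $a=\lambda b+\mu$ with $\lambda,\mu\in C$; then $\delta=\lambda d$, and $\lambda\ne0$ because $\delta\ne0$, so the identity reduces to $\mathrm{ad}_b^2([x,y])\in C$ for all $x,y$. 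If $\mathrm{char}\,R\ne2$, a short matrix computation shows that $\mathrm{ad}_b^2$ having central image on $[Q,Q]$ forces $b\in C$, contradicting $d\ne0$; hence $L\subseteq Z(R)$, which is (i). If $\mathrm{char}\,R=2$, then $\mathrm{ad}_b^2=\mathrm{ad}_{b^2}$, so $\mathrm{ad}_{b^2}([Q,Q])\subseteq C$ forces $b^2\in C$ (a nonzero derivation cannot have central image on a noncentral Lie ideal of a nonexceptional prime ring), whence $\mathrm{ad}_b^2=0$, $\delta d=0$, and $\delta=\lambda d$ --- the alternative in (ii).
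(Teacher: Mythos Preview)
The paper does not supply its own proof of this theorem; it is quoted from \cite{lee1983} and \cite{ke1985} as known background, so there is no in-paper argument to compare against.

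Your sketch has a genuine gap at the Kharchenko step. You assert that if not both of $\delta,d$ are X-inner then the identity degenerates to an impossibility, whence both are X-inner and you can pass to the GPI $[a,[b,[x,y]]]\in C$. But Kharchenko's theorem lets you treat $\delta(y)$ and $d(y)$ as independent variables only when $\delta$ and $d$ are $C$-independent modulo X-inner derivations. When $\delta-\lambda d$ is X-inner for some $\lambda\in C$ while $d$ itself is X-outer, no such separation is available and no contradiction follows. Concretely, for (ii) take $R=\text{\rm M}_n(F[t])$ with $n\ge 3$ and $\text{\rm char}\,F=2$, let $d$ be entrywise $d/dt$ (X-outer with $d^2=0$), and set $\delta=d$; then $\delta d=0$, the hypothesis holds for every Lie ideal $L$, yet neither derivation is X-inner. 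This is precisely the alternative $\delta=\lambda d$ in (ii), so your argument must arrive at it rather than exclude it. The honest Kharchenko reduction gives only that $\delta$ and $d$ are $C$-dependent modulo X-inner; writing $\delta=\lambda d+\ad_c$, one still has to analyse the residual identity $\lambda d^2([x,y])+[c,d([x,y])]\in C$ (and iterate on $d^2=\mu d+\ad_h$) to force $c\in C$, after which the characteristic $\ne 2$ and characteristic $2$ cases finish separately. Your third step is therefore only half of the story.
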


In Theorem \ref{thm26}, if both $\delta$ and $d$ are inner derivations, i.e., $\delta=\text{\rm ad}_a$ and $d=\text{\rm ad}_b$ for some $a, b\in R$, then
$\delta d(L)\subseteq Z(R)$ iff the central linear identity $\big[a, [b, x]\big]\in Z(R)$ holds for all $x\in L$.

\begin{problem}\label{problem2}
Let $R$ be an exceptional prime ring, and let $L$ be a Lie ideal of $R$. Characterize elements $a_1,\ldots,a_n\in R$ satisfying
$\big[a_1, [a_2, [\cdots, [a_n, L]\cdots]]\big]\subseteq Z(R)$.
\end{problem}

In view of Lemma \ref{lem19}, Problem \ref{problem2} is solved as follows, i.e., Theorem \ref{thm36}.
\vskip6pt

\noindent{\bf Theorem C.}
{\it Let $R$ be an exceptional prime ring, and let $L$ be a noncentral Lie ideal of $R$, and $a_i\in R$, $1\leq i\leq n$.

(i)\ If $L$ is either abelian or nonabelian of Type I, then $\big[a_1, a_2,\cdots,a_n, L\big]\subseteq Z(R)$ iff one of $a_1,\ldots,a_n$ lies in $[RC, RC]$.

(ii)\ If $L$ is nonabelian of Type II, then $\big[a_1, a_2,\cdots,a_n, L\big]\subseteq Z(R)$ iff either $a_n\in Z(R)$ or one of $a_1,\ldots,a_{n-1}$ lies in $[RC, RC]$.}
\vskip6pt

In \cite{lanski2010} Lanski described an interesting class of expressions,
based on commutators of these Engel-type polynomials that force any $R$ satisfying them to satisfy $S_4$, the standard identity of degree $4$.

\begin{defi}\label{defi3}
Let $E_1(x) = x$ and for $k\geq 1$
$$
E_{k+1}(x_1,\ldots,x_{k+1}):=\big[E_{k}(x_1,\ldots,x_{k}), x_{k+1}\big]
$$
for $x, x_1,\ldots, x_{k+1}\in R$.
\end{defi}

Let $R$ be a prime ring with $L$ a nonabelian Lie ideal. Lanski proved that if
\begin{eqnarray}
\big[E_m(x_1,\ldots,x_m), E_k(y_1,\ldots,y_k)\big]=0
\label{eq:123}
\end{eqnarray}
for all $x_i, y_j\in L$, then $R$ is exceptional (see \cite[Theorem 2]{lanski2010}).
We let $E_m(L)^+$ be the additive subgroup of $R$
generated by all elements $E_m(x_1,\ldots,x_m)$ for all $x_i\in L$.
Thus
Eq.\eqref{eq:123} holds
for all $x_i, y_j\in L$ iff $\big[E_m(L)^+, E_k(L)^+\big]=0$.
We completely characterize Eq.\eqref{eq:123} as follows (i.e., Theorem \ref{thm37}).\vskip6pt

\noindent{\bf Theorem D.}
{\it Let $R$ be a prime ring, and let $L$ be a noncentral Lie ideal of $R$, and either $m>1$ or $ k>1$. Then $\big[E_m(L)^+, E_k(L)^+\big]=0$ iff $R$ is exceptional and $L\subseteq [RC, RC]$.}
\vskip6pt

We come back to Theorem \ref{thm26}. Let $R$ be a prime ring. It is well-known that every derivation $d$ of $R$ can be uniquely extended to a derivation, denoted by $d$ also, of $Q_s(R)$ such that $d(C)\subseteq C$. Thus the derivation $d$ of $R$ is also extended to a derivation of $RC$.
As the second purpose of the paper, we study Theorem \ref{thm26} in the context of exceptional prime rings. \vskip6pt

\begin{problem}\label{problem1}
Let $R$ be an exceptional prime ring, and let $L$ be a noncentral Lie ideal of $R$. Characterize derivations $\delta, d$ of $R$ satisfying $\delta d(L)\subseteq Z(R)$.
\end{problem}

We let $\Bbb Z_2[t]$ stand for the polynomial ring in the variable $t$ over $\Bbb Z_2$, the Galois field of two elements.

\begin{remark}\label{remark1}
Let $R$ be an exceptional prime ring with extended centroid $C$, and let $L$ be a noncentral Lie ideal of $R$.

(i)\ If $L$ is either abelian or nonabelian of Type I, then $LC$ and $L$  satisfy the same differential identities (with coefficients in $RC$).
Firstly, let $L$ be abelian. Then, by \cite[Lemma 4.1]{lee2025}, $LC=[a, RC]$ for some $a\in L\setminus Z(R)$. Then $[a, R]\subseteq L\subseteq [a, RC]=LC$. Since $R$ and $RC$
 satisfy the same differential identities (see \cite[Theorem 2]{lee1992}), it follows that  $LC$ and $L$  satisfy the same differential identities. Secondly, let $L$ is nonabelian of Type I. In view of Lemma \ref{lem8} (iv), $[I, R]\subseteq L\subseteq LC=[RC, RC]$. Since $I$ and $RC$
 satisfy the same differential identities (see \cite[Theorem 2]{lee1992}), it follows that  $LC$ and $L$  satisfy the same differential identities.

 (ii) When $L$ is nonabelian of Type II, $L$ and $LC$ do not in general satisfy the same differential identities. Indeed, let $R:=\text{\rm M}_2(\Bbb Z_2[t])$, and let $L:=[R, R]+\Bbb Z_2e_{11}$. In this case, $C=\Bbb Z_2(t)$, $RC=\text{\rm M}_2(C)$, and $L$ is a nonabelian Lie ideal of Type II. Let $d$ be an X-outer derivation of $R$ such that $d(e_{11})=0$. Then $d(L)\subseteq [R, R]$ and so
 $$
 \big[\big[d(x), [y_1, y_2]\big], z\big]=0
 $$
 for all $x\in L$ and $y_1, y_2, z\in R$. Since $d$ is X-outer, $d(\beta)\ne 0$ for some $\beta\in C$. Then
 $$
  \big[\big[d(\beta e_{11}), [e_{11}, e_{12}]\big], e_{11}\big]= \big[\big[d(\beta)e_{11}, [e_{11}, e_{12}]\big], e_{11}\big]=d(\beta)e_{12}\ne 0,
 $$
 as desired. Finally, we choose a derivation $d$ of $R$ satisfying the required conditions.
 Let $R:=\text{\rm M}_2(\Bbb Z_2[t])$.
 Given $f(t)\in C=\Bbb Z_2(t)$, let $f'(t)$ be the usual derivative of $f(t)$, and then
 $$
 d\big(\sum_{1\leq i,  j\leq 2}f_{ij}(t)e_{ij}\big):=\sum_{1\leq i,  j\leq 2}f'_{ij}(t)e_{ij}.
 $$
 Then $d$ is clearly an X-outer derivation of $R$, $d(e_{11})=0$, and $d(t)=1$.
\end{remark}

In view of Lemma \ref{lem19} and Remark \ref{remark1}, Problem \ref{problem1} is solved in Theorem \ref{thm34} for $L$ noncentral abelian, Theorem \ref{thm29} for $L$ nonabelian of Type I, and Theorem \ref{thm35} for $L$ nonabelian of Type II.
In particular, the following is Theorem \ref{thm29}.\vskip6pt

\noindent{\bf Theorem E.}
{\it Let $R$ be an exceptional prime ring with nonzero derivations $\delta, d$, and let $I$ be a nonzero ideal of $R$.

(i)\ If $d$ is X-inner and $\delta$ is X-outer, then $\delta d([I, I])\subseteq Z(R)$ iff $d([R, R])\subseteq Z(R)$;

(ii)\ If both $d$ and $\delta$ are X-inner, then $\delta d([I, I])\subseteq Z(R)$ iff either $d([R, R])\subseteq Z(R)$ or $\delta([R, R])\subseteq Z(R)$;

(iii)\ If $d$ is X-outer and $\delta$ is X-inner, then $\delta d([I, I])\subseteq Z(R)$ iff $\delta([R, R])\subseteq Z(R)$;

(iv)\ If both $\delta$ and $d$ are X-outer, then $\delta d([I, I])\subseteq Z(R)$ iff there exists $\beta\in Z(R)$ such that neither $d(\beta)=0$ nor $\delta(\beta)=0$, and
\begin{eqnarray*}
\delta(\beta)d+d(\beta)\delta=\text{\rm ad}_g\ \text{\rm and}\ \ d^2=\mu d+ \text{\rm ad}_h
\end{eqnarray*}
for some $g\in RC$, $h\in [RC, RC]$ and $\mu\in C$ such that $\mu=0$ iff $g\in [RC, RC]$, and if $\mu\ne 0$ then
$
g^2+\delta(\beta)\mu g\in C.
$}

\section{Preliminaries}
Let $A$ be a subset of a ring $R$. We denote by $\overline{A}$ the subring of $R$ generated by $A$.

\begin{lem}\label{lem8}
(i)\ Let $R$ be a a prime ring. If $\big[a, [I, I]\big]=0$ where $a\in R$ and $I$ is a nonzero ideal of $R$, then $a\in Z(R)$;

(ii)\ Let $R$ be a a prime ring. If $\big[a, R\big]\subseteq Z(R)$ where $a\in R$, then $a\in Z(R)$;

(iii)\ $[I, A]=[I, \overline A]$ for  any ideal $I$ of $R$ and any additive subgroup $A$ of $R$;

(iv)\ If $L$ is a Lie ideal of a ring $R$, then $I:=R[L, L]R\subseteq L+L^2$ and $[I, R]\subseteq L$;

(v)\ Let $R$ be a prime ring with Lie ideals $K, L$. If $[K, L]\subseteq Z(R)$, then one of $K$  and $L$ is central except when $R$ is exceptional (see \cite[Lemma 7]{lanski1972}).
\end{lem}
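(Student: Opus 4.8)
I would prove the five parts separately; part (v) is precisely \cite[Lemma 7]{lanski1972}, and in part (iv) there is nothing to do unless $L$ is nonabelian (otherwise $[L,L]=0$ and $I=0$), so the real work is (i)--(iv). For part (iii) the inclusion $[I,A]\subseteq[I,\overline A]$ is trivial; for the converse I would use the universal identity $[u,yz]=[uy,z]+[zu,y]$ and induct on the length of a product: writing a generator of $\overline A$ as $b\,a_n$ with $b=a_1\cdots a_{n-1}$ a product of $n-1$ elements of $A$ and taking $u\in I$, one has $[u,b\,a_n]=[ub,a_n]+[a_nu,b]$, where $ub,a_nu\in I$ because $I$ is an ideal, so the first term lies in $[I,A]$ and the second in $[I,A^{n-1}]\subseteq[I,A]$ by induction.

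Part (iv) is the heart of the lemma. Starting from $[L,R]\subseteq L$, the identity $[ab,r]=[a,br]+[b,ra]$ (for $a,b\in L$, $r\in R$) shows $[L^2,R]\subseteq L$, so $W:=L+L^2$ is again a Lie ideal. Then $a[x,y]=[ax,y]-[a,y]x$, with $[ax,y]\in[R,L]\subseteq L$ and $[a,y]x\in L^2$, gives $R[L,L]\subseteq W$, and $[x,y]s=[[x,y],s]+s[x,y]$ gives $[L,L]R\subseteq W$. To absorb the two-sided part $R[L,L]R$ I would iterate these: writing $c:=[x,y]$, we have $rcs=r[c,s]+(rs)c$ and $r[c,s]=[rc,s]-[r,s]c$, and here $(rs)c$ and $[r,s]c$ lie in $R[L,L]\subseteq W$ while $[rc,s]\in[W,R]\subseteq W$ since $rc\in R[L,L]\subseteq W$; hence $I=R[L,L]R\subseteq W=L+L^2$, and then $[I,R]\subseteq[W,R]=[L,R]+[L^2,R]\subseteq L$.

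For part (i), apply the derivation $\ad_a$ to $[xz,y]=x[z,y]+[x,y]z$ (with $x,y,z\in I$); since $\ad_a$ annihilates $[I,I]$, this gives $[a,x][z,y]+[x,y][a,z]=0$ for all $x,y,z\in I$. Putting $y=z$ yields $[x,z][a,z]=0$, and replacing $x$ by $rx$ ($r\in R$, so $rx\in I$) yields $[r,z]\,x\,[a,z]=0$ for all $r\in R$ and $x\in I$. Fixing $z$, primeness --- the ideals generated by $[R,z]$, by $I$, and by $[a,z]$ multiply to zero, and $I\ne 0$ --- forces $z\in Z(R)$ or $[a,z]=0$; since $I$ is then the union of the two additive subgroups $I\cap Z(R)$ and $\{z\in I:[a,z]=0\}$, it coincides with one of them. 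If $I\subseteq Z(R)$ then $R$ has a nonzero central ideal and so is commutative, whence $a\in Z(R)$; if $[a,I]=0$ then $I[a,r]=0$ for all $r$, forcing $[a,r]=0$ and $a\in Z(R)$. Part (ii) is then immediate: if $[a,R]\subseteq Z(R)$ then the Jacobi identity gives $[a,[r,s]]=[[a,r],s]+[r,[a,s]]=0$ for all $r,s$, so $[a,[R,R]]=0$ and (i) with $I=R$ gives $a\in Z(R)$.

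The step I expect to be most delicate is the bookkeeping in part (iv): each individual commutator identity is routine, but one must arrange the reduction of the two-sided ideal $R[L,L]R$ into $L+L^2$ so that no uncontrolled product $RL$ or $LR$ ever appears, keeping every intermediate term inside $L$, $L^2$, $[L,R]$ or $[L^2,R]$. A secondary subtlety is choosing the right specialization in part (i) --- first $y=z$, then left multiplication by an arbitrary element of $R$ --- that turns the quadratic relation into a form where primeness applies.
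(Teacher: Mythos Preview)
Your proposal is correct and follows essentially the same route as the paper, which largely dispatches the lemma by citation: (i) to Herstein, (iii) to \cite{eroglu2019}, the containment $R[L,L]R\subseteq L+L^2$ in (iv) to \cite{lee2022}, and (v) to \cite{lanski1972}; your direct arguments simply unpack these references using the same commutator identities. The one stylistic difference is that for the second conclusion in (iv) the paper invokes (iii) in the form $[I,R]\subseteq[L+L^2,R]\subseteq[\overline L,R]=[L,R]\subseteq L$, whereas you re-prove the special case $[L^2,R]\subseteq L$ by hand --- equivalent, and arguably more transparent.
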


For (i) we refer the reader to \cite[Lemma 1.5]{herstein1969} with the same proof.  For (ii), if $\big[a, R\big]\subseteq Z(R)$, then applying the Jacobi identity $\big[a, [x, y]\big]=\big[[a, x], y]\big]+\big[x, [a, y]\big]$ for $a, x, y\in R$, we have
$$
\big[a, [R, R]\big]\subseteq \big[[a, R], R\big]+\big[R, [a, R]\big]=0
$$
and so, by (i), $a\in Z(R)$. See \cite[Fact 2]{eroglu2019} for (iii).
For (iv), it follows from \cite[Lemma 2.1]{lee2022} that $I:=R[L, L]R\subseteq L+L^2$.
By (iii), we get
$$
[I, R]\subseteq [L+L^2, R]\subseteq [\overline L, R]=[L, R]\subseteq L,
$$
as desired.

\begin{lem} (\cite[Lemma 2.3]{lee2022})\label{lem20}
If $R$ is a prime ring, then $0\ne \big[[R, R], [R, R]\big]\subseteq Z(R)$ iff $R$ is exceptional.
\end{lem}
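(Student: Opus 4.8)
The plan is to prove the biconditional by treating the two implications separately, concentrating the computational content in the forward direction ``$R$ exceptional $\Rightarrow 0\ne\big[[R,R],[R,R]\big]\subseteq Z(R)$'' and obtaining the converse almost for free from Lemma~\ref{lem8}(v). For the converse, put $L:=[R,R]$, a Lie ideal of $R$; the hypothesis says precisely that $[L,L]=\big[[R,R],[R,R]\big]\subseteq Z(R)$ while $[L,L]\ne0$. Applying Lemma~\ref{lem8}(v) with $K=L$ forces $R$ to be exceptional unless $L=[R,R]\subseteq Z(R)$, and in the latter case $[L,L]\subseteq\big[Z(R),Z(R)\big]=0$, contradicting $[L,L]\ne0$. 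So this direction needs nothing more.

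For the forward implication, assume $R$ is exceptional. First I would recall the standard fact that, $R$ being prime with $\dim_CRC=4<\infty$, the central closure $RC$ is a $4$-dimensional central simple $C$-algebra; in particular it carries a reduced trace $\tau\colon RC\to C$, and each $x\in RC$ satisfies $x^2-\tau(x)x\in C$ because its reduced characteristic polynomial has degree $2$. The crux is then: if $\tau(x)=0$ then $x^2\in C$, so for any $x,y$ with $\tau(x)=\tau(y)=0$ — hence $\tau(x+y)=0$ — one has $xy+yx=(x+y)^2-x^2-y^2\in C$, and since $\text{\rm char}\,C=2$ this reads $[x,y]\in C$. Applying this to the trace-zero commutators $[a,b]$ and $[c,d]$ (which generate $\big[[RC,RC],[RC,RC]\big]$ additively) gives $\big[[RC,RC],[RC,RC]\big]\subseteq C$. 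To descend to $R$ I would use that $C$ is central in $RC$, so $[RC,RC]=[R,R]C$ and hence $\big[[RC,RC],[RC,RC]\big]=\big[[R,R],[R,R]\big]C$; combining this with $\big[[R,R],[R,R]\big]\subseteq R$ and the standard identity $R\cap C=Z(R)$ yields $\big[[R,R],[R,R]\big]\subseteq Z(R)$.

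It then remains to show $\big[[R,R],[R,R]\big]\ne0$, equivalently (by the identity just used) $\big[[RC,RC],[RC,RC]\big]\ne0$. Here I would pass to an algebraic closure $\overline C$ of $C$: since $RC$ is central simple of dimension $4$, $RC\otimes_C\overline C\cong\M_2(\overline C)$, and forming the iterated commutator subgroup $\big[[\,\cdot\,,\,\cdot\,],[\,\cdot\,,\,\cdot\,]\big]$ commutes with this scalar extension because it is built from $C$-bilinear commutators. In $\M_2(\overline C)$ one computes $[e_{11},e_{12}]=e_{12}$, $[e_{22},e_{21}]=e_{21}$ and $[e_{12},e_{21}]=e_{11}-e_{22}=e_{11}+e_{22}\ne0$ in characteristic $2$, so $\big[[e_{11},e_{12}],[e_{22},e_{21}]\big]=e_{11}+e_{22}\ne0$ lies in $\big[[\M_2(\overline C),\M_2(\overline C)],[\M_2(\overline C),\M_2(\overline C)]\big]$. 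Hence $\big[[RC,RC],[RC,RC]\big]\ne0$, and therefore $\big[[R,R],[R,R]\big]\ne0$, which finishes the forward implication.

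I expect the one genuinely delicate step to be this last characteristic-$2$ computation: the identity matrix of $\M_2$ is itself the commutator $[e_{12},e_{21}]$ and has trace $2=0$, which is exactly what both enlarges the commutator space and collapses its second derived piece onto the scalars; in characteristic different from $2$ one would instead obtain all trace-zero matrices, which are noncentral, and the statement would fail — precisely why the nonexceptional case must be excluded. Everything else — the structure of $RC$, the behaviour of commutators under scalar extension, and the descent from $RC$ to $R$ — is routine.
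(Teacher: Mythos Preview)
The paper does not prove this lemma; it is quoted verbatim from \cite[Lemma 2.3]{lee2022} with no argument supplied, so there is no in-paper proof to compare against. Your proposal is correct and self-contained: the converse via Lemma~\ref{lem8}(v) is immediate, and for the forward direction the reduced-trace argument in the $4$-dimensional central simple algebra $RC$, the descent $\big[[R,R],[R,R]\big]\subseteq R\cap C=Z(R)$, and the explicit $\M_2(\overline C)$ computation for nonvanishing all go through as you describe.
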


\begin{lem}\label{lem7}
Let $R$ be an algebra over a field $K$ satisfying $\dim_KR/[R, R]\leq 1$, and let $A$ be an additive subgroup of $R$.
If $\big[A, [R, R]\big]\subseteq A$, then either $A\subseteq [R, R]$ or $[A, R]\subseteq A$.
\end{lem}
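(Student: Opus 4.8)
The plan is to dichotomize on whether $A\subseteq[R,R]$. If $A\subseteq[R,R]$ we are already in the first alternative, so the entire content is the case $A\not\subseteq[R,R]$, where I will prove $[A,R]\subseteq A$ directly.

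So suppose $A\not\subseteq[R,R]$ and fix $a_0\in A\setminus[R,R]$ (in particular $R\ne[R,R]$). Here I use that $[R,R]$ is a $K$-subspace of $R$, since $\mu[x,y]=[\mu x,y]$; combined with $\dim_K R/[R,R]\le1$, the nonzero image of $a_0$ must span $R/[R,R]$, so $R=[R,R]\oplus Ka_0$ as $K$-spaces. Thus every $r\in R$ has a unique expression $r=w_r+\mu_r a_0$ with $w_r\in[R,R]$, $\mu_r\in K$, and likewise every $a\in A$ can be written $a=w_a+\mu_a a_0$ with $w_a\in[R,R]$.

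Now the main computation: for $a\in A$ and $r\in R$,
$$[a,r]=[a,w_r]+\mu_r[a,a_0],\qquad [a,a_0]=[w_a+\mu_a a_0,\,a_0]=[w_a,a_0],$$
so
$$[a,r]=[a,w_r]+\mu_r[w_a,a_0]=[a,w_r]-[a_0,\,\mu_r w_a].$$
The first term lies in $[A,[R,R]]\subseteq A$ because $w_r\in[R,R]$; the second lies in $[A,[R,R]]\subseteq A$ because $a_0\in A$ and $\mu_r w_a\in[R,R]$. Hence $[a,r]\in A$, i.e. $[A,R]\subseteq A$, which is the second alternative.

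The only genuine point — and the step I expect to be the crux rather than an obstacle — is that $A$ is merely an additive subgroup, not a $K$-subspace, so the naive split $[A,R]=[A,[R,R]]+K\!\cdot\![A,a_0]$ produces unwanted scalar multiples $\mu_r[a,a_0]$ that need not a priori lie in $A$. The fix is to push the scalar $\mu_r$ onto the $[R,R]$-side by rewriting $\mu_r[a,a_0]=-[a_0,\mu_r w_a]$: since $[R,R]$ is $K$-linear, $\mu_r w_a$ is still in $[R,R]$, and the hypothesis $[A,[R,R]]\subseteq A$ then closes the argument. The degenerate case $R=[R,R]$ needs no separate treatment, since then $A\subseteq[R,R]$ automatically.
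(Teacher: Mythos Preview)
Your proof is correct. The key computation $[a,r]=[a,w_r]-[a_0,\mu_r w_a]$ is valid, and both summands lie in $[A,[R,R]]\subseteq A$ exactly as you say; the move of pushing the scalar $\mu_r$ onto the $[R,R]$-slot is the right way to handle the fact that $A$ is only an additive subgroup.

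The paper argues differently. It observes that for \emph{each individual} $a\in A\setminus[R,R]$ one has $R=[R,R]+Ka$ (using $a$ itself, not a fixed $a_0$), whence $[a,R]=[a,[R,R]]\subseteq A$. This shows $A$ is the union of its two additive subgroups $A\cap[R,R]$ and $\{a\in A: [a,R]\subseteq A\}$, and then invokes the elementary fact that a group cannot be the union of two proper subgroups. So the paper never needs your scalar-shuffling trick, but it pays for this by appealing to the group-union lemma; conversely, your argument is entirely self-contained and handles all $a\in A$ uniformly once a single witness $a_0\in A\setminus[R,R]$ is fixed, at the cost of a slightly longer computation. Both approaches exploit $\dim_K R/[R,R]\le1$ in the same way; they diverge only in how they pass from ``some $a\notin[R,R]$'' to ``$[a,R]\subseteq A$ for every $a$''.
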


\begin{proof}
Let $a\in A$. If $a\notin [R, R]$, it follows from $\dim_FR/[R, R]\leq 1$ that $R= [R, R]+Ka$ and so
$$
[a, R]=\big[a, [R, R]+Ka\big]=\big[a, [R, R]\big]\subseteq A.
$$
Therefore, $A$ is the union of its two additive subgroups:\ $\{a\in A\mid a\in [R, R]\}$ and $\{a\in A\mid [a, R]\}\subseteq A$.
Since $A$ cannot be the union of its two proper subgroups, this implies that either $A\subseteq [R, R]$ or $[A, R]\subseteq A$.
\end{proof}

It is well-known that $\dim_{Z(R)}R/[R, R]=1$ for any finite-dimensional central simple algebra $R$.

\begin{lem}\label{lem6}
Let $R$ be an exceptional prime ring, and $a\in R$. Then $a\in [RC, RC]$ iff $a^2\in Z(R)$.
\end{lem}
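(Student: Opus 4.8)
The plan is to move the whole statement into $RC$, which is a $4$-dimensional central simple $C$-algebra (the central closure of $R$) with center $C$ and $\text{\rm char}\,RC=2$; thus $RC$ is itself an exceptional simple, hence prime, ring, and $Z(R)=R\cap C=R\cap Z(RC)$. Since $a\in R$ forces $a^2\in R$, the equivalence to be proved becomes $a\in[RC,RC]\iff a^2\in C$. I first record that $C\subseteq[RC,RC]$: applying Lemma \ref{lem20} to $RC$ produces a nonzero $c\in\big[[RC,RC],[RC,RC]\big]\cap C\subseteq[RC,RC]$, and since $[RC,RC]$ is a $C$-subspace this forces $C=Cc\subseteq[RC,RC]$. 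In particular $Z(R)\subseteq[RC,RC]$, so the equivalence holds trivially when $a\in Z(R)$.

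For $a\in[RC,RC]\Rightarrow a^2\in C$: given $b\in RC$, the commutator $[a,b]$ lies in $[RC,RC]$, so $\big[a,[a,b]\big]\in\big[[RC,RC],[RC,RC]\big]\subseteq Z(RC)=C$ by Lemma \ref{lem20}. In characteristic $2$ one has the identity $\big[a,[a,b]\big]=a^2b+ba^2=[a^2,b]$, hence $[a^2,RC]\subseteq C=Z(RC)$; since $RC$ is prime, Lemma \ref{lem8}(ii) gives $a^2\in Z(RC)=C$, so $a^2\in R\cap C=Z(R)$.

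For the converse, assume $a^2\in Z(R)\subseteq C$ and, by the first paragraph, $a\notin Z(R)$, hence $a\notin C$. From $a^2\in C=Z(RC)$ the same identity yields $\big[a,[a,b]\big]=[a^2,b]=0$ for all $b\in RC$, i.e.\ $\text{\rm ad}_a^2=0$ on $RC$; therefore $[a,RC]\subseteq\ker(\text{\rm ad}_a)=V$, where $V:=\{x\in RC\mid ax=xa\}$ is the centralizer of $a$ in $RC$. Since $a$ is noncentral in the $4$-dimensional central simple algebra $RC$, $\dim_C V=2$, while by the rank-nullity theorem $\dim_C[a,RC]=4-\dim_C V=2$; hence the inclusion $[a,RC]\subseteq V$ is an equality, and in particular $a\in V=[a,RC]\subseteq[RC,RC]$.

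The computations are light (the characteristic-$2$ identity $\big[a,[a,b]\big]=[a^2,b]$, together with Lemmas \ref{lem8}(ii) and \ref{lem20}); the one point that genuinely uses structure theory is the centralizer count $\dim_C V=2$ for $a$ noncentral in a $4$-dimensional central simple algebra, which rests on $RC$ being $M_2(C)$ or a quaternion division $C$-algebra (equivalently, can be obtained by extending scalars to an algebraic closure of $C$, where $RC$ becomes $M_2$ and $a$ a non-scalar matrix). This is the step I would treat most carefully.
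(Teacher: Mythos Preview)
Your proof is correct and takes a genuinely different route from the paper. The paper passes to the algebraic closure $F$ of $C$, uses an isomorphism $RC\otimes_C F\cong\text{M}_2(F)$, and reduces both directions to the elementary fact that in $\text{M}_2(F)$ with $\text{char}\,F=2$ one has $x^2\in F$ iff $\text{tr}(x)=0$ iff $x\in[\text{M}_2(F),\text{M}_2(F)]$. Your argument instead stays inside $RC$: the forward direction is handled by the characteristic-$2$ identity $[a,[a,b]]=[a^2,b]$ together with Lemmas~\ref{lem20} and~\ref{lem8}(ii), and the converse by the kernel--image analysis of $\text{ad}_a$. The paper's approach is shorter and uniform; yours is more intrinsic and shows exactly which ring-theoretic facts drive the result.

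One remark on your final step: you can avoid invoking the centralizer count $\dim_C V=2$ from structure theory altogether. You already have $[a,RC]\subseteq V$ from $\text{ad}_a^2=0$, and $\dim_C V\ge 2$ since $1,a\in V$ are $C$-independent. Now the paper's own Lemma~\ref{lem11} gives $\dim_C[a,RC]\ge 2$ whenever $a\notin C$; combined with rank--nullity $\dim_C V+\dim_C[a,RC]=4$, this forces $\dim_C V=\dim_C[a,RC]=2$ and hence $[a,RC]=V\ni a$. So the step you flagged as ``the one to treat most carefully'' in fact follows from tools already available in the paper, with no appeal to splitting fields or quaternion structure.
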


\begin{proof}
Let $F$ be the algebraic closure of $C$.
Then there exists an $F$-algebra isomorphism $\phi\colon RC\otimes_CF\to\text{\rm M}_2(F)$.
Let $\text{\rm tr}(x)$ denote the trace of $x\in \text{\rm M}_2(F)$. Clearly, given $x\in \text{\rm M}_2(F)$, $x^2\in F$ iff $\text{\rm tr}(x)=0$ as $\text{\rm char}\,F=2$.

Let $a\in [RC, RC]$. Then $\phi(a)\in [\phi(RC), \phi(RC)]$ and so $\text{\rm tr}(\phi(a))=0$. This implies that $\phi(a^2)=\phi(a)^2\in F$ and so
$a^2\in Z(R)$. Conversely, let $a^2\in Z(R)$. Then $\phi(a)^2\in F$ and so $\text{\rm tr}(\phi(a))=0$. Thus there exist finitely many $x_i, y_i\in \text{\rm M}_2(F)$ such that
$
\phi(a)=\sum_i[x_i, y_i]
$
and so $a=\sum_i[\phi^{-1}(x_i), \phi^{-1}(y_i)]$. This implies that $a\in [RC, RC]$, as desired.
\end{proof}

In Lemma \ref{lem6}, if $a^2\in Z(R)$, $a\in [R, R]$ is generally not true.

\begin{examp}\label{examp2}
Let $R:=\text{\rm M}_2(t{\Bbb Z}_2[t])$, and let
$
a:=\left (
      \begin{array}{cc}
    t        & t\\
     t   & t
      \end{array} \right)\in R$.
Then $C={\Bbb Z}_2(t)$, $RC=\text{\rm M}_2(C)$, $a^2=0$, $a\in [RC, RC]$ but it is clear that $a\notin [R, R]$.
\end{examp}

The following lemma almost has the same proof as that of
\cite[Lemma 4.3]{calugareanu2024}.

\begin{lem}
Let $R$ be a prime ring, and let $I$ be a nonzero ideal of $R$. If $a\in
RC\setminus C$, then $\dim_C\big[a, IC\big]>1$.
\label{lem11}
\end{lem}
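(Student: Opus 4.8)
The plan is to work inside the prime ring $S:=RC$, whose center is $C$, with the nonzero ideal $J:=IC$ of $S$; the assertion becomes that $\dim_C[a,J]>1$ for every $a\in S\setminus C$. I would argue by contradiction, assuming $\dim_C[a,J]\leq1$. If $[a,J]=0$, then $[a,[J,J]]=0$, and Lemma \ref{lem8}(i) applied to the prime ring $S$ and its nonzero ideal $J$ forces $a\in Z(S)=C$, contrary to hypothesis. So $[a,J]=Cb$ for some $0\ne b\in S$.

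Next I would extract the $C$-linear map $\lambda\colon J\to C$ determined by $[a,j]=\lambda(j)b$. Since $\lambda(J)$ is a nonzero $C$-submodule of the field $C$, we have $\lambda(J)=C$, so I may fix $j_0\in J$ with $[a,j_0]=b$ and set $K:=\ker\lambda=J\cap C_S(a)$. The decisive observation is that $K$ ``nearly annihilates'' $b$: for $k\in K$ we have $[a,k]=0$ and $kj_0,\,j_0k\in J$, whence $kb=[a,kj_0]\in Cb$ and $bk=[a,j_0k]\in Cb$; that is, $Kb+bK\subseteq Cb$. Since $J=K+Cj_0$, this yields
$$
JbJ\subseteq Cb+C(bj_0)+C(j_0b)+C(j_0bj_0),
$$
so $JbJ$ is a $C$-subspace of dimension at most $4$; on the other hand $JbJ$ is an ideal of $S$, and it is nonzero because $b\ne0$ and $J$ is a nonzero ideal of the prime ring $S$.

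The heart of the matter is then that a nonzero ideal of finite dimension over the center of a prime ring must be the whole ring. Indeed, $N:=JbJ$ is a nonzero ideal of the prime ring $S$, hence itself prime, and being finite-dimensional over the field $C$ it is a simple Artinian $C$-algebra; its identity $e$ satisfies $es=ese=se$ for all $s\in S$ (since $es,se\in N$ and $e$ is the two-sided identity of $N$), so $e$ centralizes $S$ and thus $e\in Z(S)=C$. Then $e$ is the identity of $S$, and the ideal $N$ containing it equals $S$; hence $\dim_C RC\leq4$. So $RC$ is a finite-dimensional central simple $C$-algebra, and as $a\notin C$ it is not $C$ itself; its $C$-dimension, a perfect square not exceeding $4$, therefore equals $4$, and $IC=RC$ by simplicity.

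Finally, I would extend scalars to the algebraic closure $\overline{C}$ of $C$: $RC\otimes_C\overline{C}\cong\text{\rm M}_2(\overline{C})$, and the image of $a\otimes1$ is a non-scalar matrix (because $a\notin C$), whose centralizer in $\text{\rm M}_2(\overline{C})$ is the two-dimensional subalgebra $\overline{C}[a\otimes1]$; hence $\dim_{\overline{C}}[a\otimes1,\text{\rm M}_2(\overline{C})]=2$, so $\dim_C[a,RC]=2>1$, contradicting $\dim_C[a,IC]\leq1$. This contradiction proves the lemma. The step I expect to be the main obstacle is the heart of the matter, namely producing the nonzero finite-dimensional ideal $JbJ$ and deducing from it that $RC$ is four-dimensional over $C$; once in the split situation $\text{\rm M}_2(\overline{C})$ the commutator-dimension count is elementary, and the preliminary reductions are routine.
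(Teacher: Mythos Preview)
Your argument is correct. The paper does not actually supply a proof of this lemma; it only remarks that the proof is essentially that of \cite[Lemma 4.3]{calugareanu2024}, so there is no in-text argument to compare against. All of your steps check out: the identities $kb=[a,kj_0]$ and $bk=[a,j_0k]$ give $Kb+bK\subseteq Cb$, whence $JbJ$ is a nonzero ideal of $S=RC$ of $C$-dimension at most~$4$; the standard fact that a nonzero ideal of a prime ring is itself prime, combined with finite $C$-dimension, makes $N=JbJ$ simple Artinian with identity $e$ satisfying $es=(es)e=ese=e(se)=se$ for all $s\in S$, so $e\in C$ and hence $e=1$, giving $S=N$ and $\dim_C RC\le 4$; and the final centralizer count in $\text{\rm M}_2(\overline C)$ is routine. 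One minor streamlining you could make, if you wish to avoid invoking the structure theory of $N$: once $N$ is a nonzero finite-dimensional ideal, primeness of $S$ already gives a faithful left action of $S$ on $N$, hence an embedding $S\hookrightarrow\text{End}_C(N)$ and $\dim_C RC<\infty$; this suffices, since for any non-scalar matrix $a$ in $\text{\rm M}_n(\overline C)$ one has $\dim_{\overline C}[a,\text{\rm M}_n(\overline C)]\ge 2$.
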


\noindent {\bf The Skolem-Noether Theorem.}\ {\it Let $R$ be a finite-dimensional central simple algebra. Then every derivation $d$ of $R$ satisfying $d(Z(R))=0$ is inner.}\vskip6pt

\section{Problem \ref{problem3}: Simple rings}
We are now ready to prove the first main theorem, i.e., Theorem A, in the paper.

\begin{thm}\label{thm16}
Let $R$ be an exceptional simple ring, and let $A$ be a noncentral additive subgroup of $R$.
Then $\big[A, [R, R]\big]\subseteq A$ iff either $Z(R)\subseteq A\subseteq [R, R]$ or $[R, R]\subseteq A$.
\end{thm}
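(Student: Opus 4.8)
The plan is to prove the ``iff'' in two directions, with the substance lying in the forward implication. For the easy direction, suppose $[R,R]\subseteq A$; then $\big[A,[R,R]\big]\subseteq[A,A]\subseteq A$ is immediate since $A$ is an additive subgroup closed under commutators with $[R,R]$ once $[R,R]\subseteq A$ — more directly, $[R,R]$ is a Lie ideal, so $\big[[R,R],[R,R]\big]\subseteq[R,R]\subseteq A$, and any additive subgroup containing $[R,R]$ absorbs $\big[[R,R],[R,R]\big]$. For the other part of the easy direction, if $Z(R)\subseteq A\subseteq[R,R]$, then $\big[A,[R,R]\big]\subseteq\big[[R,R],[R,R]\big]\subseteq Z(R)\subseteq A$ by Lemma \ref{lem20} (applied to the simple, hence prime, exceptional ring $R$). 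So both listed alternatives do force $\big[A,[R,R]\big]\subseteq A$.

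For the forward direction, assume $A$ is noncentral with $\big[A,[R,R]\big]\subseteq A$. Since $R$ is an exceptional simple ring, it is a $4$-dimensional central simple algebra over $Z(R)=C$, so $\dim_{Z(R)}R/[R,R]=1$ (as noted in the remark after Lemma \ref{lem7}, via the reduced-trace form in characteristic $2$; equivalently $[R,R]$ is the set of trace-zero elements and has codimension $1$). Hence Lemma \ref{lem7} applies with $K=Z(R)$: either $A\subseteq[R,R]$, or $[A,R]\subseteq A$, i.e. $A$ is a Lie ideal of $R$. Consider the second case first. Then $A$ is a Lie ideal of the simple ring $R$; invoking Theorem \ref{thm14}, either $A\subseteq Z(R)$ — excluded, since $A$ is noncentral — or $A=Z(R)a+Z(R)$ for some $a\in A\setminus Z(R)$, or $[R,R]\subseteq A$. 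The last possibility is one of the desired conclusions, so I only need to rule out $A=Z(R)a+Z(R)$ or absorb it. Here $a^2\in Z(R)$ by Lemma \ref{lem6} (since $A=Ca+C$ forces $a\in[RC,RC]=[R,R]$, because $R=RC$ is already $4$-dimensional over its center; more carefully, a one-dimensional-modulo-center Lie ideal lies in $[R,R]$ in the exceptional simple case — this is part of Lemma \ref{lem19}(i) read for $C=Z(R)$). So $A\subseteq[R,R]$ after all, merging this subcase into the first case.

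Thus it remains to treat $A\subseteq[R,R]$ and show $Z(R)\subseteq A$. This is the crux. Fix $a\in A\setminus Z(R)$ (possible since $A$ is noncentral). I want to produce every element of $Z(R)$ inside $A$ using the absorption hypothesis together with explicit commutator identities in the $4$-dimensional algebra. Passing to the algebraic closure $F$ of $C$ via the isomorphism $\phi\colon RC\otimes_CF\xrightarrow{\sim}\M_2(F)$ from the proof of Lemma \ref{lem6}, $a$ corresponds to a trace-zero $2\times 2$ matrix over $F$ that is not scalar, so $\phi(a)$ has minimal polynomial $X^2-\lambda$ for some $\lambda\in F$ (with $\lambda=0$ possible). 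The key computation is: for a trace-zero noncentral $a$ with $a^2=\lambda\in Z(R)$ and any $x\in R$, one checks in $\M_2$ that $\big[a,[a,x]\big]$ is again a specific trace-zero element and that, choosing $x$ so that $[a,x]$ ranges suitably, the iterated brackets $\big[a,[a,x]\big]$ generate enough of $[R,R]$ so that a further bracket lands on a central scalar; concretely, in $\M_2(F)$ with $a=e_{12}$ one has $\big[a,[a,e_{21}]\big]$ proportional to $a$, while mixing with a diagonal trace-zero element yields $\big[a,[\,\cdot\,,\cdot\,]\big]$ hitting $1\in F$ after the appropriate combination — i.e. some iterated commutator of $a$ with elements of $[R,R]$, all of which stay in $A$ by repeated use of $\big[A,[R,R]\big]\subseteq A$ (noting $[R,R]$ is itself closed under $[\,\cdot\,,[R,R]]$ so we never leave $A$), equals a nonzero element $\beta\in Z(R)$. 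Then $\beta\in A\cap Z(R)$ with $\beta\neq 0$; since $Z(R)$ is a field and $A$ is closed under left multiplication by... no — $A$ need not be an ideal — so instead I use that $A$ is a Lie ideal-like object only under $[R,R]$. The cleaner finish: having $A\subseteq[R,R]$ and $\big[A,[R,R]\big]\subseteq A$ makes $A$ an ``$[R,R]$-submodule'' of the Lie ring $[R,R]$; since $[R,R]/Z(R)$ is a simple Lie ring in the exceptional case (the $3$-dimensional simple Lie algebra $\mathfrak{sl}_2$ in characteristic $2$ modulo its center, which is simple as a Lie ring), the image of $A$ in $[R,R]/Z(R)$ is either $0$ or all of $[R,R]/Z(R)$; it is nonzero because $a\notin Z(R)$, hence $A+Z(R)=[R,R]$, and then the explicit scalar $\beta\in A\cap Z(R)\setminus\{0\}$ together with $Z(R)\beta^{-1}\subseteq\dots$ — rather, simply: $A+Z(R)=[R,R]$ and $A\ni\beta\neq0$ central forces, for any $z\in Z(R)$, writing $z\beta^{-1}w$ — this last step needs care. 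The honest plan is: show $A\cap Z(R)\neq0$ by the matrix computation above, then show $Z(R)\subseteq A$ by noting $\big[A,[R,R]\big]\subseteq A$ combined with simplicity of $[R,R]/Z(R)$ gives $A\supseteq[[R,R],[R,R]]=Z(R)$ directly (by Lemma \ref{lem20} the latter is a nonzero central — in fact one checks it is all of $Z(R)$ in the exceptional simple case, since $\big[[R,R],[R,R]\big]$ is a nonzero $Z(R)$-subspace of the $1$-dimensional space $Z(R)$). That is the clean route: $Z(R)=\big[[R,R],[R,R]\big]\subseteq\big[A',[R,R]\big]$ once we know $A$ surjects onto $[R,R]/Z(R)$, and surjectivity follows from simplicity of that Lie ring plus $a\notin Z(R)$.

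The main obstacle I anticipate is exactly this surjectivity/simplicity step: in characteristic $2$ the Lie algebra $\mathfrak{sl}_2$ does not behave as in characteristic $\neq2,3$ — its derived structure and ideal lattice must be checked by hand over $F=\overline C$ — so I would spend the bulk of the argument verifying that $[RC,RC]/C$ is a simple Lie ring (equivalently, that the only $[R,R]$-invariant additive subgroups of $[R,R]$ strictly between $Z(R)$ and $[R,R]$ do not exist), using the explicit $e_{ij}$ basis of $\M_2(F)$ and descending back to $R$ via primeness/simplicity and the scalar-extension isomorphism $\phi$ of Lemma \ref{lem6}. Everything else is either the cited theorems (Theorems \ref{thm14}, Lemmas \ref{lem6}, \ref{lem7}, \ref{lem20}) or routine $2\times2$ matrix arithmetic.
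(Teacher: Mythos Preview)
Your reverse direction and the initial splitting via Lemma~\ref{lem7} and Theorem~\ref{thm14} match the paper. The gap is in the crucial case $A\subseteq[R,R]$: your plan hinges on $[R,R]/Z(R)$ being a simple Lie ring (equivalently, on the adjoint $[R,R]$-action on $[R,R]/Z(R)$ having no proper invariant subgroups), and this is \emph{false} in the exceptional situation. Indeed Lemma~\ref{lem20} gives $\big[[R,R],[R,R]\big]\subseteq Z(R)$, so the Lie bracket on $[R,R]/Z(R)$ is identically zero and \emph{every} additive subgroup is invariant. Concretely, in $R=\M_2(F)$ with $\text{char}\,F=2$ take $A=Fe_{12}+F(e_{11}+e_{22})$: then $A$ is noncentral, $A\subseteq[R,R]$, and $\big[A,[R,R]\big]=F(e_{11}+e_{22})=Z(R)\subseteq A$, yet $A+Z(R)=A\subsetneq[R,R]$. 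So the surjectivity you need never holds in general, and the matrix verification you plan to carry out would simply fail.

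What you are missing is a one-line scaling trick that bypasses Lie structure entirely. From $A\nsubseteq Z(R)$ and Lemma~\ref{lem8}(i) one has $\big[A,[R,R]\big]\ne 0$, and from $A\subseteq[R,R]$ and Lemma~\ref{lem20} one has $\big[A,[R,R]\big]\subseteq Z(R)$. Now use that $Z(R)$ can be absorbed \emph{inside} the commutator: since $Z(R)R=R$,
\[
Z(R)\,\big[A,[R,R]\big]=\big[A,[Z(R)R,R]\big]=\big[A,[R,R]\big],
\]
so $\big[A,[R,R]\big]$ is a nonzero $Z(R)$-subspace of the one-dimensional $Z(R)$-space $Z(R)$, hence equals $Z(R)$. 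Thus $Z(R)=\big[A,[R,R]\big]\subseteq A$, which is exactly the conclusion you were trying to reach. This is the paper's argument; your parenthetical observation that $\big[[R,R],[R,R]\big]$ is a nonzero $Z(R)$-subspace of $Z(R)$ was pointing in the right direction---you just needed to apply that same reasoning to $\big[A,[R,R]\big]$ directly rather than routing through a (nonexistent) simplicity statement.
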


\begin{proof}
In this case, $C=Z(R)$, $RC=R$, and $\dim_{Z(R)}R/[R, R]=1$.

``$\Rightarrow$'':\ In view of Lemma \ref{lem7}, either $A\subseteq [R, R]$ or $[A, R]\subseteq A$.
The latter case implies that $A$ is a Lie ideal of $R$ and so, by Theorem \ref{thm14}, either $A=Z(R)a+Z(R)$ for some $a\in A\setminus Z(R)$ or $[R, R]\subseteq A$. However, if $A=Z(R)a+Z(R)$ for some $a\in A\setminus Z(R)$, then, by Lemma \ref{lem11},
$
A=[a, R]=Z(R)a+Z(R).
$
Hence $Z(R)\subseteq A\subseteq [R, R]$.

Assume next that $A\subseteq [R, R]$. Since $A$ is not central, it follows from Lemma \ref{lem8} (i) that $0\ne \big[A, [R, R]\big]\subseteq A\subseteq [R, R]$.
By Lemma \ref{lem20}, we get
$$
0\ne \big[A, [R, R]\big]\subseteq \big[[R, R], [R, R]\big]\subseteq Z(R).
$$
Since $Z(R)$ is a field and $RZ(R)=R$,
$$
Z(R)=Z(R)\big[A, [R, R]\big]=\big[A, [Z(R)R, R]\big]=\big[A, [R, R]\big]\subseteq A.
$$
Hence $Z(R)\subseteq A\subseteq [R, R]$, as desired.

``$\Leftarrow$'':\ If $[R, R]\subseteq A$, then $\big[A, [R, R]\big]\subseteq [R, R]\subseteq A$, as desired. Assume next that
$Z(R)\subseteq A\subseteq [R, R]$. Since $R$ is exceptional, by Lemma \ref{lem20} we get
$$
\big[A, [R, R]\big]\subseteq \big[[R, R], [R, R]\big]\subseteq Z(R)\subseteq A,
$$
as desired
\end{proof}

In Theorem \ref{thm16}, let $A:={\Bbb Z}_2a+Z(R)$, where $a\in [R, R]\setminus Z(R)$. Then $\big[A, [R, R]\big]\subseteq A$ but the additive subgroup $A$
is not a $Z(R)$-subspace of $R$ if $Z(R)\ne {\Bbb Z}_2$. The following is an immediate consequence of Theorem \ref{thm16}.

\begin{cor}\label{cor1}
Let $R$ be an exceptional simple ring, and let $A$ be a noncentral $Z(R)$-subspace of $R$.
Then $\big[A, [R, R]\big]\subseteq A$ iff $A$ is equal to $Z(R)+Z(R)w$ for some $w\in [R, R]\setminus Z(R)$, $[R, R]$, or $R$.
In this case, $A$ is a Lie ideal of $R$.
\end{cor}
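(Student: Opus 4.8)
The plan is to derive everything from Theorem~\ref{thm16} by an elementary dimension count. I will use that, for an exceptional simple ring, $Z(R)=C$ and $RC=R$, so $R$ is a $4$-dimensional central simple $Z(R)$-algebra; hence $\dim_{Z(R)}R=4$, $\dim_{Z(R)}R/[R,R]=1$, and therefore $\dim_{Z(R)}[R,R]=3$ while $\dim_{Z(R)}Z(R)=1$.

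For ``$\Rightarrow$'', assume $A$ is a noncentral $Z(R)$-subspace with $\big[A,[R,R]\big]\subseteq A$. By Theorem~\ref{thm16}, either $Z(R)\subseteq A\subseteq[R,R]$ or $[R,R]\subseteq A$. In the first alternative $A$ is a $Z(R)$-subspace with $Z(R)\subsetneq A$ (strict, since $A$ is noncentral) and $A\subseteq[R,R]$, so $\dim_{Z(R)}A\in\{2,3\}$: if it is $2$ then $A=Z(R)+Z(R)w$ for any chosen $w\in A\setminus Z(R)$, and such $w$ automatically lies in $[R,R]\setminus Z(R)$; if it is $3$ then $A=[R,R]$. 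In the second alternative $[R,R]\subseteq A\subseteq R$ forces $\dim_{Z(R)}A\in\{3,4\}$, that is, $A=[R,R]$ or $A=R$. For ``$\Leftarrow$'', each subspace in the list either satisfies $Z(R)\subseteq A\subseteq[R,R]$ or satisfies $[R,R]\subseteq A$, so the ``$\Leftarrow$'' direction of Theorem~\ref{thm16} applies directly.

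The final claim, that such an $A$ is always a Lie ideal, is immediate for $A=[R,R]$ and $A=R$. For $A=Z(R)+Z(R)w$ with $w\in[R,R]\setminus Z(R)$, I would show $[w,R]=Z(R)+Z(R)w$, whence $[A,R]=Z(R)[w,R]=A$. Indeed, by Lemma~\ref{lem6} we have $w^2\in Z(R)$, and since also $\text{\rm char}\,R=2$ a direct computation gives $\big[w,[w,x]\big]=0$ for all $x\in R$, so $[w,R]$ lies in the centralizer $C_R(w)$ of $w$ in $R$; also $Z(R)+Z(R)w\subseteq C_R(w)$ trivially. The rank-nullity identity for the $Z(R)$-linear map $\text{\rm ad}_w$ on $R$ gives $\dim_{Z(R)}[w,R]+\dim_{Z(R)}C_R(w)=4$, and Lemma~\ref{lem11} (with $I=R$) gives $\dim_{Z(R)}[w,R]>1$; hence $\dim_{Z(R)}C_R(w)\le 2$, which forces $C_R(w)=Z(R)+Z(R)w$ and then $[w,R]=Z(R)+Z(R)w$. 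I do not expect any genuine obstacle, since the statement is a corollary of Theorem~\ref{thm16}; the only step with its own short argument is this identification of $[w,R]$, resting on $w^2\in Z(R)$ in characteristic $2$ and on Lemma~\ref{lem11}.
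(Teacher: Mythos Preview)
Your proof is correct and follows exactly the route the paper has in mind: the ``iff'' is derived from Theorem~\ref{thm16} by the obvious dimension count over $Z(R)$, which is all the paper indicates by calling it an ``immediate consequence''. For the final Lie ideal claim in the $2$-dimensional case you give a clean self-contained argument via $w^2\in Z(R)$, rank--nullity for $\text{\rm ad}_w$, and Lemma~\ref{lem11}; the paper would presumably just invoke \cite[Theorem 4.4]{lee2025} (cf.\ Lemma~\ref{lem19}(i)) for the fact that $Cw+C$ is a Lie ideal, so your direct computation is a pleasant bonus rather than a deviation.
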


We also need the following well-known theorem due to Herstein (see the corollary of \cite[Theorem 1.5]{herstein1969}).

\begin{thm} (Herstein) \label{thm24}
If $R$ is a noncommutative simple ring, then $\overline{[R, R]}=R$.
\end{thm}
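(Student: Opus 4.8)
The plan is to realize $R$ as the ideal $R\big[[R,R],[R,R]\big]R$ and to note that this ideal is automatically contained in the subring $\overline{[R,R]}$; since the reverse inclusion $\overline{[R,R]}\subseteq R$ is trivial, this gives the theorem. So I would prove two things: (a) $\big[[R,R],[R,R]\big]\neq 0$; (b) $R\big[[R,R],[R,R]\big]R\subseteq\overline{[R,R]}$. Of these, (b) is essentially immediate from Lemma \ref{lem8}(iv), while (a) is where the work lies.

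For (b): $[R,R]$ is a Lie ideal of $R$, so taking $L=[R,R]$ in Lemma \ref{lem8}(iv) gives $R\big[[R,R],[R,R]\big]R=R[L,L]R\subseteq L+L^2=[R,R]+[R,R]^2\subseteq\overline{[R,R]}$, the last step because the subring $\overline{[R,R]}$ contains $[R,R]$ and hence every product of elements of $[R,R]$. Granting (a), $R\big[[R,R],[R,R]\big]R$ is a nonzero ideal of $R$, so by simplicity it equals $R$; combined with (b) this yields $R\subseteq\overline{[R,R]}$, and we are done.

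It remains to prove (a), which is the only real obstacle. Since a simple ring is prime, suppose for contradiction that $\big[[R,R],[R,R]\big]=0$; in particular $\big[[R,R],[R,R]\big]\subseteq Z(R)$. If $R$ is exceptional, Lemma \ref{lem20} already gives $\big[[R,R],[R,R]\big]\neq 0$, a contradiction; so $R$ is nonexceptional, and Lemma \ref{lem8}(v) with $K=L=[R,R]$ forces $[R,R]$ to be central. Then $[a,R]\subseteq[R,R]\subseteq Z(R)$ for every $a\in R$, so Lemma \ref{lem8}(ii) yields $a\in Z(R)$ for all $a$, i.e. $R$ is commutative, contrary to hypothesis. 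Thus (a) holds. The split into the exceptional and nonexceptional cases is forced by the fact that Lemma \ref{lem8}(v) is valid only ``except when $R$ is exceptional'', which is exactly the gap filled by Lemma \ref{lem20}; note also that $[R,R]\neq 0$ by noncommutativity, so the objects above are not vacuous.
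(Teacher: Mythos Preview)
Your proof is correct. The paper itself does not give a proof of Theorem~\ref{thm24}; it simply cites the corollary of \cite[Theorem~1.5]{herstein1969}. Your argument reconstructs the result from the toolkit already assembled in \S2: Lemma~\ref{lem8}(iv) gives the containment $R\big[[R,R],[R,R]\big]R\subseteq [R,R]+[R,R]^2\subseteq\overline{[R,R]}$, and your case split via Lemma~\ref{lem20} and Lemma~\ref{lem8}(v),(ii) correctly shows $\big[[R,R],[R,R]\big]\neq 0$, so simplicity finishes it.

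Two small remarks. First, you tacitly use that $\big[[R,R],[R,R]\big]\neq 0$ forces $R\big[[R,R],[R,R]\big]R\neq 0$; this holds because a simple ring is prime and in a prime ring $RaR=0$ implies $a=0$ (the paper uses this freely, e.g.\ in Lemma~\ref{lem19}). Second, there is a mild circularity risk at the level of the \emph{cited literature}: Lemma~\ref{lem8}(v) is taken from Lanski--Montgomery \cite{lanski1972}, which postdates and builds on Herstein's work, so one would want to check that \cite[Lemma~7]{lanski1972} does not itself invoke $\overline{[R,R]}=R$. Within the paper's own logical framework, however, where Lemma~\ref{lem8} and Lemma~\ref{lem20} are treated as established inputs, your argument is sound.
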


\begin{lem}\label{lem3}
Let $R$ be an exceptional simple ring, and let $A$ be a noncentral subring of $R$. Then $Z(R)\subseteq A\subseteq [R, R]$ iff $A=Z(R)a+Z(R)$ for some $a\in A\setminus Z(R)$ with $a^2\in Z(R)$.
\end{lem}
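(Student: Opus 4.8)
The plan is to prove the two implications separately; the heart of the forward direction is to show that a noncentral subring contained in $[R,R]$ must be commutative. Throughout, $C=Z(R)$ and $RC=R$ since $R$ is simple. For ``$\Leftarrow$'': if $A=Z(R)a+Z(R)$ with $a\in A\setminus Z(R)$ and $a^2\in Z(R)$, then expanding a product $(\lam a+\mu)(\nu a+\rho)$ and using $a^2\in Z(R)$ shows $A$ is closed under multiplication, hence a subring; it contains $Z(R)$ and is noncentral because $a\notin Z(R)$. By Lemma \ref{lem6}, $a\in[RC,RC]=[R,R]$, and since $[R,R]$ is a $Z(R)$-submodule of $R$, $Z(R)a\subseteq[R,R]$. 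Also $Z(R)\subseteq[R,R]$: by Lemma \ref{lem20} the set $\big[[R,R],[R,R]\big]$ is a nonzero $Z(R)$-submodule of the field $Z(R)$, hence equals $Z(R)$, so $Z(R)=\big[[R,R],[R,R]\big]\subseteq[R,R]$. Thus $A=Z(R)a+Z(R)\subseteq[R,R]$.

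For ``$\Rightarrow$'', assume $Z(R)\subseteq A\subseteq[R,R]$ with $A$ a noncentral subring. The first and main step is to prove $A$ commutative. Since $A$ is a subring inside $[R,R]$, Lemma \ref{lem6} gives $x^2\in Z(R)$ for every $x\in A$ and likewise $(x+y)^2\in Z(R)$ and $(xy)^2\in Z(R)$ for all $x,y\in A$. Put $\gam:=xy+yx$; then $(x+y)^2=x^2+xy+yx+y^2$ shows $\gam\in Z(R)$, and comparing $(xy)(yx)=xy^2x=x^2y^2$ with the substitution $yx=\gam+xy$ yields $\gam\,xy=x^2y^2+(xy)^2\in Z(R)$. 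If $\gam\ne0$, then $xy\in Z(R)$ because $Z(R)$ is a field; using that $x^2$ and $xy$ are central one then obtains, in turn, $x^2\gam=0$, hence $x^2=0$, hence $xyx=x^2y=0$, hence $(xy)^2=(yx)^2=0$ and so $\gam=0$ — a contradiction. So $\gam=0$, i.e. $xy=yx$, and $A$ is commutative. (Alternatively, one may transfer to $RC\otimes_CF\cong\M_2(F)$ as in the proof of Lemma \ref{lem6}: the images of elements of $A$ and of their products are trace-zero matrices, and the identity $XY+YX=\text{\rm tr}(XY)I$ for trace-zero $2\times2$ matrices forces these images, hence $A$, to commute.)

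Once $A$ is commutative, choose $a\in A\setminus Z(R)$ (possible as $A$ is noncentral); then $A\subseteq C_R(a)$. Lemma \ref{lem11} applied with $I=R$ gives $\dim_{Z(R)}[a,R]>1$, so rank--nullity applied to the $Z(R)$-linear map $\ad_a\colon R\to R$ (whose image is $[a,R]$ and whose kernel is $C_R(a)$) gives $\dim_{Z(R)}C_R(a)\le4-2=2$. On the other hand $Z(R)a+Z(R)\subseteq C_R(a)$ is already $2$-dimensional since $a\notin Z(R)$, so $C_R(a)=Z(R)a+Z(R)$. Therefore $A\subseteq C_R(a)=Z(R)a+Z(R)\subseteq A$, the last inclusion holding because $A$ is a subring containing $Z(R)$ and $a$; hence $A=Z(R)a+Z(R)$, and $a^2\in Z(R)$ by Lemma \ref{lem6}. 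The only real obstacle is the commutativity step: the Lie-ideal structure theorems quoted earlier are not directly applicable here, so commutativity has to be extracted by hand from the centrality of all the relevant squares together with the absence of nonzero nilpotents in the field $Z(R)$ (or, equivalently, by descending to $\M_2$ of the algebraic closure).
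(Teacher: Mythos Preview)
Your proof is correct, but the forward direction takes a genuinely different route from the paper's. The paper argues structurally: since $Z(R)\subseteq A\subseteq[R,R]$, Theorem~\ref{thm16} gives $\big[A,[R,R]\big]\subseteq A$, and since $A$ is a subring containing $Z(R)$ it is a $Z(R)$-subspace, so Corollary~\ref{cor1} forces $A$ to be one of $Z(R)+Z(R)w$, $[R,R]$, or $R$; then Herstein's theorem $\overline{[R,R]}=R$ (Theorem~\ref{thm24}) rules out $A=[R,R]$. You instead prove directly that any subring of $[R,R]$ is commutative, exploiting only Lemma~\ref{lem6} (every element of $[R,R]$ has central square) and the field structure of $Z(R)$, and then finish with a centralizer dimension count via Lemma~\ref{lem11}. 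Your argument is thus independent of Theorem~\ref{thm16}, Corollary~\ref{cor1}, and Theorem~\ref{thm24}; in exchange it uses Lemma~\ref{lem11}, which the paper's proof does not. The payoff is a self-contained and elementary argument that does not lean on the section's main theorem, and your observation that every subring of $[R,R]$ in an exceptional simple ring is commutative is a clean standalone fact.

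One small remark on presentation: the step ``$x^2\gam=0$'' is correct but compressed. The quickest justification is that once $xy\in Z(R)$ you get $x(xy)=(xy)x$, i.e.\ $x^2y=xyx$, whence $x\gam=x^2y+xyx=0$ in characteristic~$2$; then $x^2\gam=x\cdot(x\gam)=0$. (In fact $x\gam=0$ already gives $x=0$ and hence $\gam=0$, short-circuiting the rest of the chain.) Your alternative $\M_2(F)$ argument is also valid: for trace-zero $X,Y$ one has $XY+YX=\mathrm{tr}(XY)\,I$, and since $A$ is a subring the product $xy$ again has trace-zero image, forcing $XY=YX$.
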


\begin{proof}
``$\Rightarrow$'': Assume that $Z(R)\subseteq A\subseteq [R, R]$. In view of Theorem \ref{thm16}, $\big[A, [R, R]\big]\subseteq A$ and $AZ(R)=A$ as $A$ is a subring of $R$.  By Corollary \ref{cor1}, $A$ is equal to either $Z(R)+Z(R)w$ for some $w\in [R, R]\setminus Z(R)$ or $[R, R]$.
 If $A=[R, R]$, then, by Theorem \ref{thm24}, $R=\overline {[R, R]}\subseteq A$, a contradiction. Thus $A=Z(R)a+Z(R)$ for some $a\in A\setminus Z(R)$. Since $A\subseteq [R, R]$, it follows that $a^2\in Z(R)$ (see Lemma \ref{lem6}).

``$\Leftarrow$'':\ Assume that $A=Z(R)a+Z(R)$ for some $a\in A\setminus Z(R)$ with $a^2\in Z(R)$. Since $R$ is an exceptional simple ring, it is clear that
$a\in [R, R]$ as $a^2\in Z(R)$ (see Lemma \ref{lem6}). Therefore, $Z(R)\subseteq A\subseteq [R, R]$.
\end{proof}

\begin{cor}\label{cor6}
Let $R$ be an exceptional simple ring, and let $A$ be a noncentral subring of $R$.
Then $\big[A, [R, R]\big]\subseteq A$ iff either $A=R$ or $A=Z(R)a+Z(R)$ for some $a\in A\setminus Z(R)$ with $a^2\in Z(R)$.
\end{cor}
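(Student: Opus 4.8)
The plan is to assemble this corollary directly from results already in hand, using only that a subring is in particular an additive subgroup, plus Herstein's Theorem~\ref{thm24}. Throughout, recall that for a simple ring $RC=R$ and $C=Z(R)$, so Lemma~\ref{lem6} reads: $a\in[R,R]$ iff $a^2\in Z(R)$.

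For the direction ``$\Leftarrow$'', I would split into the two listed possibilities. If $A=R$, then $\big[A,[R,R]\big]=\big[R,[R,R]\big]\subseteq[R,R]\subseteq A$, so there is nothing to do. If instead $A=Z(R)a+Z(R)$ for some $a\in A\setminus Z(R)$ with $a^2\in Z(R)$, then Lemma~\ref{lem6} gives $a\in[R,R]$, hence $Z(R)\subseteq A\subseteq[R,R]$; since $a\notin Z(R)$ the subgroup $A$ is noncentral, so the ``$\Leftarrow$'' half of Theorem~\ref{thm16} yields $\big[A,[R,R]\big]\subseteq A$. (Alternatively one could quote the ``$\Leftarrow$'' half of Lemma~\ref{lem3} followed by Theorem~\ref{thm16}, but going through Lemma~\ref{lem6} is the shortest route.)

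For the direction ``$\Rightarrow$'', assume $\big[A,[R,R]\big]\subseteq A$. Since $A$ is a noncentral additive subgroup of the exceptional simple ring $R$, Theorem~\ref{thm16} gives exactly two cases. In the case $Z(R)\subseteq A\subseteq[R,R]$, the fact that $A$ is a \emph{noncentral subring} lets me invoke the ``$\Rightarrow$'' half of Lemma~\ref{lem3}, which produces $a\in A\setminus Z(R)$ with $a^2\in Z(R)$ and $A=Z(R)a+Z(R)$. In the case $[R,R]\subseteq A$, since $A$ is closed under multiplication it contains $\overline{[R,R]}$; because $R$ is exceptional it is noncommutative (as $\dim_C RC=4>1$), so Herstein's Theorem~\ref{thm24} gives $\overline{[R,R]}=R$, forcing $A=R$. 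This exhausts both cases.

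I do not expect a genuine obstacle here: the only place the ``subring'' hypothesis is used beyond what Theorem~\ref{thm16} already supplies is (a) the appeal to Lemma~\ref{lem3} in the first case, and (b) the passage $[R,R]\subseteq A\Rightarrow \overline{[R,R]}\subseteq A\Rightarrow A=R$ in the second case; the latter is the one spot where care is needed, since it relies both on $A$ being multiplicatively closed and on $R$ being noncommutative so that Theorem~\ref{thm24} applies. Everything else is a straight citation, so the write-up should be short.
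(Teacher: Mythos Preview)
Your proposal is correct and follows essentially the same route as the paper's own proof: both directions invoke Theorem~\ref{thm16} to reduce to the two cases $Z(R)\subseteq A\subseteq[R,R]$ and $[R,R]\subseteq A$, then handle the first via Lemma~\ref{lem3} (and Lemma~\ref{lem6}) and the second via Theorem~\ref{thm24}. The only cosmetic difference is that you spell out the trivial case $A=R$ in ``$\Leftarrow$'' explicitly, while the paper omits it.
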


\begin{proof}
``$\Rightarrow$'':\ Since $R$ is exceptional and $\big[A, [R, R]\big]\subseteq A$, it follows from Theorem \ref{thm16} that either $Z(R)\subseteq A\subseteq [R, R]$ or $[R, R]\subseteq A$. The latter case implies that $R=\overline{[R, R]}\subseteq A$ and so $A=R$ (see Theorem \ref{thm24}). Assume next that $Z(R)\subseteq A\subseteq [R, R]$.
By Lemma \ref{lem3}, $A=Z(R)a+Z(R)$ for some $a\in A\setminus Z(R)$ with $a^2\in Z(R)$.

``$\Leftarrow$'':\ Assume that $A=Z(R)a+Z(R)$ for some $a\in A\setminus Z(R)$ with $a^2\in Z(R)$. Since $a^2\in Z(R)$ and $A$ is noncentral, we get $a\in [R, R]\setminus Z(R)$ (see Lemma \ref{lem6}). Thus $Z(R)\subseteq A\subseteq [R, R]$ and so, by Theorem \ref{thm16},
$
\big[A, [R, R]\big]\subseteq A,
$
as desired.
\end{proof}

\section{Problem \ref{problem3}: Prime rings}

In this section we turn to the proof of Theorem B.

\begin{pro}\label{pro1}
Let $R$ be an exceptional prime ring, and let $A$ be a noncentral additive subgroup of $R$. Suppose that $\big[A, [I, I]\big]\subseteq A$, where $I$ is a nonzero ideal of $R$. Then Then the following hold:

(i)\ $\beta Z(R)\subseteq A$ for some nonzero $\beta\in Z(R)$;

 (ii)\ Either $AC=Ca+C$ for some $a\in A\setminus Z(R)$ with $a^2\in Z(R)$ or $[RC, RC]\subseteq AC$.
\end{pro}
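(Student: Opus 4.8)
The plan is to reduce Proposition~\ref{pro1} to the simple-ring case (Theorem~\ref{thm16}, equivalently Theorem~A) by passing to the ring of quotients and exploiting the fact that, for an exceptional prime ring, $RC$ is a $4$-dimensional central simple $C$-algebra with $IC = RC$ for every nonzero ideal $I$. First I would record the structural facts: $C$ is a field, $RC$ is central simple over $C$ with $\dim_C RC = 4$, $\operatorname{char} RC = 2$, $\dim_C RC/[RC,RC] = 1$, and $[RC,RC]$ is precisely $\{x \in RC : x^2 \in C\}$ by Lemma~\ref{lem6}. Since $I$ is a nonzero ideal, $IC = RC$, so $[I,I]$ spans $[RC,RC]$ over $C$; more carefully, $[RC,RC] = C[I,I]$, hence $\big[AC, [RC,RC]\big] = C\big[A,[I,I]\big] \subseteq CA = AC$ (using that $[A,[I,I]]\subseteq A$ and $C$ is central). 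Thus $AC$ is a noncentral $C$-subspace of the exceptional simple ring $RC$ satisfying $\big[AC, [RC,RC]\big] \subseteq AC$, and Corollary~\ref{cor1} applies: $AC$ equals $C + Cw$ for some $w \in [RC,RC]\setminus C$, or $[RC,RC]$, or $RC$.

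For part~(ii), in the first case I set $w \in [RC,RC]$, so $w^2 \in C$ by Lemma~\ref{lem6}; I must upgrade $w \in RC$ to an element $a \in A$ (not merely $A \setminus Z(R)$) with $AC = Ca + C$ and $a^2 \in Z(R)$. Since $AC = C + Cw$ and $A \not\subseteq Z(R)$, some element $a \in A \setminus Z(R)$ has the form $a = \gamma w + c$ with $\gamma \in C^\times$, $c \in C$; then $AC = Ca + C$ and $a \in [RC,RC]$ (as $[RC,RC]$ is a $C$-subspace containing $w$ and $C$ — wait, $C \not\subseteq [RC,RC]$ in general), so I instead argue directly: $a^2 = \gamma^2 w^2 + c^2 \in C$ (char $2$ kills the cross term), and then $a^2 \in RC \cap$ (something in $R$)\,; here one needs $a^2 \in Z(R)$, i.e. $a^2 \in R$ — but $a \in A \subseteq R$, so $a^2 \in R \cap C = Z(R)$. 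Good. In the remaining two cases $AC \supseteq [RC,RC]$ directly. This settles~(ii).

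Part~(i) is the genuinely new point and I expect it to be the main obstacle, since Theorem~\ref{thm16} only controls $AC$, not $A$ itself, whereas we want a nonzero central multiple of $Z(R)$ inside $A$. The idea: by Lemma~\ref{lem20} applied to $RC$ (or directly since $R$ is exceptional), $\big[[RC,RC],[RC,RC]\big]$ is a nonzero subset of $C$, in fact a nonzero $C$-subspace, hence equals $C$. Now $[I,I]$ spans $[RC,RC]$, so $\big[[I,I],[I,I]\big]$ spans $C$ over $C$ as well; concretely there exist finitely many commutators $u_j, v_j \in [I,I]$ with $\sum_j [u_j,v_j] =: \beta_0 \neq 0$ a nonzero element of $C$, and since $u_j, v_j \in R$ we get $\beta_0 \in R \cap C = Z(R)$. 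The key computation is then $\big[A, [u_j,v_j]\big]$: applying the Jacobi identity, $\big[a,[u_j,v_j]\big] = \big[[a,u_j],v_j\big] + \big[u_j,[a,v_j]\big]$, and since $u_j,v_j \in [I,I]$ we have $[a,u_j],[a,v_j] \in \big[A,[I,I]\big] \subseteq A$ — but then I need $[v_j, A] \subseteq A$ and $[u_j,A]\subseteq A$, which is exactly the hypothesis again, so $\big[a,[u_j,v_j]\big] \in \big[A,[I,I]\big]\subseteq A$. Hmm, that just reshuffles; the cleaner route is: since $\beta_0 \in Z(R) \subseteq C$ and $\big[A,[I,I]\big]\subseteq A$, for any $a \in A\setminus Z(R)$ and any $z \in Z(R)$ the element $z\beta_0 \cdot(\text{structure})$... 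Actually the right mechanism is to observe $Z(R)\big[A,[I,I]\big] = \big[A,[Z(R)I,I]\big] \subseteq \big[A,[I,I]\big] \subseteq A$ since $Z(R)I \subseteq I$, and then pick $a\in A\setminus Z(R)$; because $AC = Ca+C$ or $AC \supseteq [RC,RC]$, in either case $[a,[I,I]]$ is noncentral (by Lemma~\ref{lem8}(i), as $a\notin Z(R)$ and $[I,I]$ is a Lie ideal generating $I$), so $\big[A,[I,I]\big] \not\subseteq Z(R)$; combining with the previous display and Lemma~\ref{lem20}, $\big[\big[A,[I,I]\big],[I,I]\big]$ contains a nonzero element $\beta \in \big[[RC,RC],[RC,RC]\big]\cap R = Z(R)$, and $Z(R)\beta = \big[[A,[I,I]], [Z(R)I,I]\big] \subseteq \big[A,[I,I]\big] \subseteq A$ — wait, I need $\beta Z(R)\subseteq A$, and $\beta \in \big[[A,[I,I]],[I,I]\big]$ with the outer iterated bracket landing in $\big[A,[I,I]\big]\subseteq A$ after using Jacobi, so $\beta \in A$ and then $\beta Z(R) = \big[\text{(that bracket)}, Z(R)[I,I]\cdots\big]\subseteq A$. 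I would make this last chain precise by tracking that $A$ absorbs brackets with $[I,I]$ and that $Z(R)[I,I]\subseteq[I,I]$, yielding $\beta Z(R)\subseteq A$. I expect the bookkeeping of \emph{which} iterated bracket is being absorbed — and ensuring the resulting central element is genuinely nonzero (this is where exceptionality, via Lemma~\ref{lem20}, is essential) — to be the delicate part of the write-up.
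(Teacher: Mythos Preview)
Your overall strategy matches the paper's: for (ii) pass to the simple ring $RC$ and invoke Theorem~\ref{thm16}/Corollary~\ref{cor1}; for (i) look at the iterated commutator $S := \big[[I,I],[A,[I,I]]\big]$, which lands in $Z(R)\cap A$, and then absorb $Z(R)$ via $Z(R)I \subseteq I$. Part (ii) is fine as written. (Incidentally, your parenthetical worry ``$C \not\subseteq [RC,RC]$ in general'' is misplaced here: in characteristic~$2$ the identity matrix has trace zero, so in fact $C \subseteq [RC,RC]$ in the exceptional setting; but your direct computation $a^2 = \gamma^2 w^2 + c^2 \in C$ works regardless.)

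The genuine gap is in part (i). You assert that for $a \in A \setminus Z(R)$ the set $[a,[I,I]]$ is noncentral, citing Lemma~\ref{lem8}(i); but that lemma only yields $[a,[I,I]] \neq 0$, not that it escapes $Z(R)$. Your claim fails exactly when $AC = Ca + C$: then $a \in [RC,RC]$ (Lemma~\ref{lem6}, since $a^2 \in C$), so $[a,[I,I]] \subseteq \big[[RC,RC],[RC,RC]\big] \subseteq C$ by Lemma~\ref{lem20}, whence $[A,[I,I]]\subseteq Z(R)$ and $S = 0$. Thus your argument produces no nonzero $\beta$ in one of the two cases you yourself isolated in (ii). The paper fixes this by splitting on whether $S = 0$. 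If $S \neq 0$, your argument goes through. If $S = 0$, Lemma~\ref{lem8}(i) forces $[A,[I,I]] \subseteq Z(R)$, and this set is still nonzero (Lemma~\ref{lem8}(i) again, as $A \not\subseteq Z(R)$); take $\beta$ directly from $[A,[I,I]] \subseteq A \cap Z(R)$, and the same $Z(R)I \subseteq I$ absorption gives $\beta Z(R) \subseteq [A,[I,I]] \subseteq A$. You had all the ingredients; the error was only in ruling out this second case.
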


\begin{proof}
(i)\ By the fact that $RC$ is a central simple algebra over $C$, it follows that $IC=RC$. Since
$
\big[A, [I, I]\big]\subseteq A,
$
we have
$\big[AC, [IC, IC]\big]\subseteq AC$, that is,
$
\big[AC, [RC, RC]\big]\subseteq AC.
$
Then
$$
S:=\big[[I, I], \big[A, [I, I]\big]\big]\subseteq \big[[I, I], A\big]\subseteq A.
$$
It follows from Lemma \ref{lem20} that $S\subseteq Z(R)\cap A$.

Suppose first that $S\ne \{0\}$. Then there exists a nonzero $\beta\in S\cap A\subseteq Z(R)$ and so
\begin{eqnarray*}
\beta Z(R)&\subseteq& \big[[I, I], \big[A, [I, I]\big]\big]Z(R)\\
                   &\subseteq& \big[[I, I], \big[A, [I, IZ(R)]\big]\big]\\
                   &\subseteq& \big[[I, I], \big[A, [I, I]\big]\big]\\
                   &\subseteq& A.
\end{eqnarray*}

Suppose next that $S=\{0\}$. It follows from Lemma \ref{lem8} (i) that
$
\big[A, [I, I]\big]\subseteq A\cap Z(R).
$
Since $A$ is not central, $\big[A, [I, I]\big]\ne 0$ (see Lemma \ref{lem8} (i)) and so  there exists a nonzero $\beta\in \big[A, [I, I]\big]\subseteq A\cap Z(R)$ and hence $\beta Z(R)\subseteq A$, as desired.

(ii)\ Note that
$
\big[AC, [RC, RC]\big]=\big[AC, [IC, IC]\big]\subseteq AC.
$
By Theorem \ref{thm16}, it follows that either $AC\subseteq [RC, RC]$ or $[RC, RC]\subseteq AC$. That is, either $AC\subsetneq [RC, RC]$ or $[RC, RC]\subseteq AC$. If $AC\subsetneq [RC, RC]$, then $\dim_CAC=2$ and so $AC=Ca+C$ for some $a\in A\setminus Z(R)$. Since $a\in [RC, RC]$, we get $a^2\in Z(R)$ (see Lemma \ref{lem6}).
\end{proof}

We are now ready to prove the second main theorem, i.e., Theorem B.

\begin{thm}\label{thm19}
Let $R$ be an exceptional prime ring, and let $A$ be a noncentral additive subgroup of $R$. Suppose that $\big[A, L\big]\subseteq A$, where $L$ is a nonabelian Lie ideal of $R$. Then the following hold:

(i)\ $\beta Z(R)\subseteq A$ for some nonzero $\beta\in Z(R)$;

(ii)\ Either $AC=Ca+C$ for some $a\in A\setminus Z(R)$ with $a^2\in Z(R)$ or $[RC, RC]\subseteq AC$.
\end{thm}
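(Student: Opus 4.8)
The plan is to reduce Theorem \ref{thm19} to Proposition \ref{pro1} by exploiting Lemma \ref{lem19}(ii), which tells us that a nonabelian Lie ideal $L$ contains a proper Lie ideal. First I would set $I:=R[L,L]R$, a nonzero ideal of $R$ by primeness, and invoke Lemma \ref{lem8}(iv) to get $[I,R]\subseteq L$, hence in particular $[I,I]\subseteq[I,R]\subseteq L$. From the hypothesis $[A,L]\subseteq A$ it then follows immediately that $\big[A,[I,I]\big]\subseteq[A,L]\subseteq A$. At this point the hypotheses of Proposition \ref{pro1} are satisfied verbatim, and both conclusions (i) and (ii) transfer directly. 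So the entire proof is essentially a two-line deduction.

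The one subtlety worth checking is that Proposition \ref{pro1} genuinely applies: it requires $A$ noncentral (given) and $I$ a nonzero ideal (which holds because $L$ is nonabelian, so $[L,L]\ne 0$, so $R[L,L]R\ne 0$ by primeness). Since $R$ is exceptional by hypothesis, all the structural facts used inside Proposition \ref{pro1} — that $IC=RC$, that $RC$ is a $4$-dimensional central simple $C$-algebra, that $\dim_C RC/[RC,RC]=1$, and the application of Theorem \ref{thm16} — remain in force. No further work is needed.

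There is essentially no obstacle here: the heavy lifting was already done in Proposition \ref{pro1} (the $S\ne\{0\}$ versus $S=\{0\}$ case split and the invocation of Lemma \ref{lem20} and Theorem \ref{thm16}), and the only new ingredient is the reduction from a nonabelian Lie ideal to the ideal $I=R[L,L]R$ via Lemma \ref{lem8}(iv). I would write the proof as follows.

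\begin{proof}
Since $L$ is a nonabelian Lie ideal of $R$, we have $[L,L]\ne 0$ and so $I:=R[L,L]R$ is a nonzero ideal of $R$ by the primeness of $R$. By Lemma \ref{lem8} (iv), $[I,R]\subseteq L$ and hence $[I,I]\subseteq[I,R]\subseteq L$. Combining this with the hypothesis $[A,L]\subseteq A$ yields
$$
\big[A,[I,I]\big]\subseteq[A,L]\subseteq A.
$$
Thus $A$ is a noncentral additive subgroup of the exceptional prime ring $R$ satisfying $\big[A,[I,I]\big]\subseteq A$ for the nonzero ideal $I$ of $R$. Applying Proposition \ref{pro1}, we conclude that $\beta Z(R)\subseteq A$ for some nonzero $\beta\in Z(R)$, and that either $AC=Ca+C$ for some $a\in A\setminus Z(R)$ with $a^2\in Z(R)$ or $[RC,RC]\subseteq AC$, as desired.
\end{proof}
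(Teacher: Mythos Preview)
Your proposal is correct and follows essentially the same approach as the paper: set $I:=R[L,L]R$, use Lemma \ref{lem8}(iv) to obtain $[I,I]\subseteq L$, deduce $\big[A,[I,I]\big]\subseteq A$, and apply Proposition \ref{pro1}. The paper's proof is virtually identical, only slightly terser in justifying $[I,I]\subseteq L$.
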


\begin{proof}
It follows from Lemma \ref{lem8} (iv) that $I:=R[L, L]R\subseteq L+L^2$ and $[I, I]\subseteq L$. Thus
$
\big[A, [I, I]\big]\subseteq A.
$
We are done by Proposition \ref{pro1}.
\end{proof}

In Theorem \ref{thm19} (ii) we cannot conclude that $A$ contains a proper Lie ideal of $R$ even when $AC=[RC, RC]$.

\begin{examp}\label{examp3}
{\rm Let $R:=\text{\rm M}_2(\Bbb Z_2[t])$, and let
$$
A:=\Bbb Z_2[t](e_{11}+e_{22})+\Bbb Z_2e_{12}+\Bbb Z_2e_{21},
$$
which is an additive subgroup of $R$.
Then $C=\Bbb Z_2(t)$, $RC=\text{\rm M}_2(\Bbb Z_2(t))$, and
$$
[RC, RC]=\Bbb Z_2(t)(e_{11}+e_{22})+\Bbb Z_2(t)e_{12}+\Bbb Z_2(t)e_{21}=AC.
$$
Clearly, we have
$
\big[A, [R, R]\big]=\Bbb Z_2[t](e_{11}+e_{22})\subseteq A.
$
Since every ideal of $R$ is of the form $\text{\rm M}_2(g(t)\Bbb Z_2[t])$ for some $g(t)\in \Bbb Z_2[t]$, it follows that
$A$ contains no any proper Lie ideal of $R$.
}
\end{examp}

We next characterize additive subgroups $A$ of an exceptional prime ring $R$ satisfying $[A, I]\subseteq A$ for some nonzero ideal $I$ of $R$ (see Theorem \ref{thm23} below).

\begin{lem}(\cite[Lemma 2.2 (i)]{abdioglu2017})\label{lem2}
Let $R$ be a prime ring, $a, b\in R$.
Suppose that $\big[a, [b, R]\big]=0$. If $b\notin C$, then $a\in Cb+C$.
\end{lem}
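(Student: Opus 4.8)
The plan is to turn the hypothesis $[a,[b,x]]=0$ ($x\in R$) into a bilinear generalized identity with a free variable sitting in the middle, and then invoke the standard structure theory of such identities over a prime ring.

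First I would substitute $x\mapsto xy$ and use the Leibniz rule $[b,xy]=[b,x]y+x[b,y]$ together with the hypothesis (which kills $[a,[b,x]]$ and $[a,[b,y]]$) to obtain
$$[b,x][a,y]+[a,x][b,y]=0\qquad\text{for all }x,y\in R.$$
Substituting $y\mapsto yz$ in this identity, expanding $[a,yz]$ and $[b,yz]$ by Leibniz, and using the identity once more to cancel the two terms in which $z$ appears on the right, I would reach
$$[b,x]\,y\,[a,z]+[a,x]\,y\,[b,z]=0\qquad\text{for all }x,y,z\in R.$$

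Now, because $b\notin C$ we have $[b,R]\neq 0$, so I can fix $x_{0}\in R$ with $[b,x_{0}]\neq 0$. Reading the last display with $x=x_{0}$, it has the shape $p_{1}yq_{1}+p_{2}yq_{2}=0$ for all $y\in R$, with $p_{1}=[b,x_{0}]$, $p_{2}=[a,x_{0}]$, $q_{1}=[a,z]$, $q_{2}=[b,z]$. The key point is the classical fact (a special case of the Martindale generalized-identity machinery, cf. \cite{beidar1996}) that if $p_{1},p_{2}\in Q_s(R)$ are linearly independent over $C$, then such a relation forces $q_{1}=q_{2}=0$. Here that would give $[a,z]=0=[b,z]$ for every $z\in R$ and hence $b\in C$, a contradiction; so $[a,x_{0}]$ and $[b,x_{0}]$ must be $C$-dependent, i.e. $[a,x_{0}]=\lambda[b,x_{0}]$ for some $\lambda\in C$ (with $\lambda=0$ allowed, which also covers the case $[a,x_0]=0$).

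Finally, feeding $[a,x_{0}]=\lambda[b,x_{0}]$ back into the bilinear identity with $x=x_{0}$ gives $[b,x_{0}]\,y\,\big([a,z]+\lambda[b,z]\big)=0$ for all $y,z\in R$; since $[b,x_{0}]\neq 0$, primeness yields $[a,z]+\lambda[b,z]=0$, i.e. $[a-\lambda b,z]=0$, for all $z\in R$. Hence $a-\lambda b$ centralizes $R$, so $a-\lambda b\in C$ and $a\in Cb+C$, as desired. I expect the manipulations with the Leibniz rule to be routine bookkeeping; the one genuinely substantive step is the appeal to the Martindale-type lemma to force the $C$-dependence of $[a,x_{0}]$ and $[b,x_{0}]$, and all the structure-theoretic weight of the argument sits there.
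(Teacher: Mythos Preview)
Your argument is correct and is the standard linearization route to this lemma; the paper itself does not supply a proof but merely cites \cite[Lemma 2.2 (i)]{abdioglu2017}, so there is nothing to compare against. One cosmetic slip: from $[b,x_{0}]\,y\,\big([a,z]+\lambda[b,z]\big)=0$ you get $[a+\lambda b,z]=0$, not $[a-\lambda b,z]=0$; this does not affect the conclusion $a\in Cb+C$.
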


\begin{lem}\label{lem5}
Let $R$ be a prime ring with a noncentral subring $A$. Suppose that $\big[A, I\big]\subseteq A$, where $I$ is a nonzero ideal of $R$.
Then either $A$ contains a nonzero ideal of $R$ or $AC=Ca+C$ for some $a\in A\setminus Z(R)$ with $a^2\in Z(R)$.
\end{lem}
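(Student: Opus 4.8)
The plan is to reduce to the already-established structural results for additive subgroups invariant under inner derivations and then exploit the subring hypothesis. First I would observe that since $A$ is a subring, $A$ is closed under the Lie product $[\cdot,\cdot]$, so $[A,A]\subseteq A$; combined with $[A,I]\subseteq A$ this gives a good deal of control. The key move is to pass to the central closure: from $[A,I]\subseteq A$ and the primeness of $R$ one gets $[AC,IC]\subseteq AC$, and since $IC=RC$ (as $RC$ is a central simple algebra over $C$ when $R$ is prime — here I am in the exceptional situation so $\dim_C RC=4$), this reads $[AC,RC]\subseteq AC$. Hence $AC$ is a Lie ideal of the simple ring $RC$. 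By Theorem \ref{thm14} applied to $RC$, either $AC\subseteq C$, or $AC=Ca+C$ for some $a\in AC\setminus C$, or $[RC,RC]\subseteq AC$. The first case is excluded because $A$ is noncentral (so $AC\not\subseteq C$).

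In the middle case $AC=Ca+C$, I need $a$ to be chosen in $A\setminus Z(R)$ and with $a^2\in Z(R)$. That $a$ may be taken in $A$ itself follows by a standard argument: $AC = Ca' + C$ for some $a'\in A\setminus Z(R)$ after scaling (pick any element of $A$ not in $C$; modulo $C$ it spans the one-dimensional quotient $AC/C$, and scaling by $C$ keeps us inside $AC$, but to land inside $A$ one uses that $A$ is an additive subgroup already containing the relevant element up to a central scalar — more carefully, since $\dim_C AC/(AC\cap C) \le 1$, any $a\in A$ with $a\notin C$ works). Now I want $a^2\in Z(R)$. Here the subring hypothesis is essential: since $A$ is a subring and $a\in A$, we have $a^2\in A$, so $a^2\in AC = Ca+C$, say $a^2=\mu a+\nu$ with $\mu,\nu\in C$. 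On the other hand, since $AC$ is a Lie ideal of $RC$ with $AC\subsetneq [RC,RC]$ (because $[RC,RC]\not\subseteq AC$ in this case and $\dim_C[RC,RC]=3$ forces $\dim_C AC=2$), we get $AC\subseteq [RC,RC]$ by Theorem \ref{thm16} (or directly: a proper Lie ideal of $M_2$ in characteristic $2$ of the wrong dimension sits inside the trace-zero matrices). Then $a\in[RC,RC]$, so by Lemma \ref{lem6}, $a^2\in Z(R)$, as wanted. (This also forces $\mu a = a^2 - \nu \in C$, so $\mu=0$, consistent with $a\notin C$.)

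In the remaining case $[RC,RC]\subseteq AC$, I must produce a nonzero ideal of $R$ contained in $A$. The idea is to intersect with $R$: since $[RC,RC]\subseteq AC$ and $[R,R]\subseteq[RC,RC]$, elements of $[R,R]$ lie in $AC$, i.e. can be written as $C$-combinations of elements of $A$; to clear denominators one uses that there is a nonzero $\beta\in C$ (in fact in $Z(R)$ after multiplying by a suitable nonzero ideal element, since $C$ is the field of fractions of (a subring of) $Z(R)$-type elements) with $\beta[R,R]\subseteq A$. Then $\beta[R,R]$, or rather the ideal $R\beta[R,R]R$ it generates — note $\beta\in Z(R)$ makes $R\beta=\beta R$ an ideal — lies inside $A$ once we absorb products using $A^2\subseteq A$; more precisely $R[R,R]R$ is a nonzero ideal (as $R$ is noncommutative prime), and one shows $\beta\cdot R[R,R]R \subseteq \overline{\beta[R,R]}\subseteq A$ using $[I,A]=[I,\overline A]$-type absorption from Lemma \ref{lem8}(iii)–(iv) and the subring property.

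The main obstacle I anticipate is the bookkeeping in the last case: pinning down a genuinely two-sided ideal of $R$ (not merely of $RC$) inside $A$, which requires carefully locating a nonzero element $\beta$ of $Z(R)$ clearing the $C$-denominators and then checking that the ideal it generates is swallowed by $A$ using that $A$ is closed under multiplication. The exceptional hypothesis enters crucially through $\dim_C[RC,RC]=3$ (forcing the dimension count in the middle case) and through Lemma \ref{lem6}; without it the trichotomy from Theorem \ref{thm14} would be the cleaner dichotomy of Theorem \ref{thm15} and the statement would collapse to the Lanski–Montgomery conclusion.
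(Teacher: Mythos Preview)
Your argument has two genuine gaps. First, Lemma \ref{lem5} is stated for an arbitrary prime ring, but you immediately restrict to the exceptional case in order to use $IC=RC$, Theorem \ref{thm14}, and Lemma \ref{lem6}. None of these are available in general: $RC$ need not be simple, and $a\in[RC,RC]$ does not imply $a^2\in C$ outside the exceptional setting. Second, even granting the exceptional hypothesis, your final step is not sound: from $[RC,RC]\subseteq AC$ you try to descend to a nonzero ideal of $R$ inside $A$ by ``clearing denominators'' with a single $\beta\in Z(R)$, but $[R,R]$ is not finitely generated as an additive group, so no uniform $\beta$ need exist, and the claimed absorption $\beta R[R,R]R\subseteq A$ is not justified by Lemma \ref{lem8}(iii)--(iv), which controls only commutators, not two-sided multiplication by $R$.

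The paper bypasses all of this with a direct, characteristic-free argument that never passes to $RC$. It splits on whether $[A,A]=0$. If $[A,A]\ne 0$, the identity $[a,b]x=-b[a,x]+[a,bx]$ gives $[A,A]I\subseteq A[A,I]+[A,AI]\subseteq A$ (using $AI\subseteq I$ and that $A$ is a subring), whence $I[A,A]I$ is a nonzero two-sided ideal of $R$ contained in $A$. If $[A,A]=0$, then $[A,[A,I]]=0$, hence $[A,[A,R]]=0$ by GPI transfer from $I$ to $R$; Lemma \ref{lem2} then forces $A\subseteq Ca+C$ for any $a\in A\setminus Z(R)$, Lemma \ref{lem11} gives $AC=Ca+C$, and $[a,[a,I]]=0$ together with Posner's theorem yields $\text{\rm char}\,R=2$ and $a^2\in C$. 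The subring hypothesis is used only in the one-line ideal construction of the first case, not via any dimension count.
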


\begin{proof}
Case 1:\ $[A, A]=0$. Then $[A, [A, I]]=0$. In view of \cite[Theorem 2]{chuang1988}, $I$ and $R$ satisfy the same GPIs. Thus $[A, [A, R]]=0$.
Choose an element $a\in A\setminus Z(R)$. Then $\big[x, [a, R]\big]=0$ for all $x\in A$.
In view of Lemma \ref{lem2}, there exist $\alpha, \beta\in C$ such that $x=\alpha a+\beta$. Hence $A\subseteq Ca+C$.
On the other hand, $[a, IC]\subseteq AC$.
By Lemma \ref{lem11}, $\dim_C[a, IC]>1$ and hence $AC=Ca+C$.
In particular, $\big[a, [a, I]\big]=0$, implying that, by \cite[Theorem 1]{posner1957}, $\text{\rm char}\,R=2$ and $a^2\in C$, as desired.

Case 2:\ $[A, A]\ne 0$. For $a, b\in A$ and $x\in I$, we have $[a, b]x=-b[a, x]+[a, bx]$.
Thus $[A, A]I\subseteq A[A, I]+[A, AI]\subseteq A$. Similarly, $I[A, A]\subseteq A$ and hence
Let $\rho:=[A, A]I$ be a nonzero right ideal of $R$ contained in $A$.
Then
$$
0\ne I\rho\subseteq [\rho, I]+\rho I\subseteq [A, I]+\rho\subseteq A,
$$
as desired.
\end{proof}

\begin{lem}\label{lem10}
Let $R$ be a ring with ideals $I$ and $J$. If either $IJ\ne 0$ or $JI\ne 0$, then $[I, J]$ contains a proper Lie ideal of $R$.
\end{lem}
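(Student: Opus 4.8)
The plan is to exhibit explicitly a nonzero ideal $M$ of $R$ with $[M,R]\subseteq[I,J]$; then $[M,R]$ is the desired proper Lie ideal sitting inside $[I,J]$. By symmetry of the commutator, $[I,J]=[J,I]$ as additive subgroups, so we may assume without loss of generality that $IJ\neq 0$, and I would take $M:=IJ$. First I would check that $M$ is indeed a (nonzero) ideal of $R$: since $I$ is an ideal, $RM=(RI)J\subseteq IJ=M$, and since $J$ is an ideal, $MR=I(JR)\subseteq IJ=M$.

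The heart of the argument is a short identity. For $a\in I$, $b\in J$ and $r\in R$, note that $br\in J$ and $ra\in I$ because $I,J$ are ideals, and a direct expansion gives
\begin{eqnarray*}
[ab,r]=abr-rab=\big(abr-bra\big)-\big(rab-bra\big)=[a,br]-[ra,b].
\end{eqnarray*}
Hence $[ab,r]\in[I,J]$. Since $M=IJ$ is additively generated by the elements $ab$ with $a\in I$, $b\in J$, linearity yields $[M,R]\subseteq[I,J]$.

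It remains only to recall (as in the Jacobi-identity argument used elsewhere in the paper) that $[M,R]$ is a Lie ideal of $R$ whenever $M$ is an ideal, so that $[M,R]$ is, by definition, a proper Lie ideal of $R$ contained in $[I,J]$. If instead $JI\neq 0$, the same computation with the roles of $I$ and $J$ interchanged shows $[JI,R]\subseteq[J,I]=[I,J]$, completing the proof. No step presents a real obstacle here: the only point requiring care is that $IJ$ (resp. $JI$) genuinely be a two-sided ideal, which is exactly where both hypotheses ``$I$ an ideal'' and ``$J$ an ideal'' are used, and the commutator identity above is a routine manipulation.
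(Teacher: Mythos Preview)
Your proof is correct and follows essentially the same approach as the paper: both arguments set $M=IJ$ (or $JI$) and use a commutator identity to show $[M,R]\subseteq[I,J]$. The paper uses the equivalent form $[ij,r]=[i,jr]+[j,ri]$, and your version makes explicit that $IJ$ is a two-sided ideal, which the paper leaves implicit.
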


\begin{proof}
Let $i\in I$, $j\in J$ and $r\in R$. Then
$
[ij, r]=[i, jr]+[j, ri],
$
and so
$$
[IJ, R]\subseteq [I, JR]+[J, RI]\subseteq [I, J].
$$
Similarly,
$
[JI, R]\subseteq [I, J].
$
Since either $IJ\ne 0$ or $JI\ne 0$, this proves that $[I, J]$ contains a proper Lie ideal of $R$.
\end{proof}

\begin{thm}\label{thm23}
Let $R$ be an exceptional prime ring, and let $A$ be a noncentral additive subgroup of $R$. Suppose that $\big[A, I\big]\subseteq A$, where $I$ is a nonzero ideal of $R$.
Then either $A$ contains a proper Lie ideal of $R$ or $AC=Ca+C$ for some $a\in A\setminus Z(R)$ with $a^2\in Z(R)$.
\end{thm}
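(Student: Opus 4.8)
The statement to prove is Theorem \ref{thm23}: for an exceptional prime ring $R$ and a noncentral additive subgroup $A$ with $[A,I]\subseteq A$ for a nonzero ideal $I$, either $A$ contains a proper Lie ideal or $AC=Ca+C$ for some $a\in A\setminus Z(R)$ with $a^2\in Z(R)$. The natural strategy is to reduce the additive-subgroup hypothesis to a subring hypothesis so that Lemma \ref{lem5} applies, and then to upgrade ``$A$ contains a nonzero ideal'' to ``$A$ contains a proper Lie ideal'' using Lemma \ref{lem10}.

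\medskip

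First I would pass to $\overline A$, the subring of $R$ generated by $A$. The key point is that the hypothesis $[A,I]\subseteq A$ propagates: by Lemma \ref{lem8}(iii) we have $[I,\overline A]=[I,A]\subseteq A\subseteq \overline A$, so $[\overline A, I]\subseteq \overline A$. Also $\overline A$ is still noncentral (it contains $A$). Now apply Lemma \ref{lem5} to $\overline A$: either $\overline A$ contains a nonzero ideal $M$ of $R$, or $\overline A C=Cb+C$ for some $b\in\overline A\setminus Z(R)$ with $b^2\in Z(R)$. In the second case $\dim_C\overline A C=2$; since $A\subseteq\overline A$ and $A$ is noncentral, $\dim_C AC=2$ as well, hence $AC=\overline A C=Cb+C$. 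But I need $a$ to lie in $A$, not merely in $\overline A$: pick any $a\in A\setminus Z(R)$; then $AC=Ca+C$ (two-dimensional, containing the noncentral element $a$), and since $a\in AC=Cb+C$ and $b^2\in Z(R)$, Lemma \ref{lem6} gives $a^2\in Z(R)$. This settles the second alternative.

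\medskip

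In the first case, $\overline A$ contains a nonzero ideal $M$ of $R$. I want to produce a proper Lie ideal inside $A$ (not just inside $\overline A$). Here is where the hypothesis $[A,I]\subseteq A$ must be used again, now on the ideal $M$: we have $[A,MI]\subseteq A$ and, since $MI$ is a nonzero ideal (primeness), and $M\subseteq\overline A$, consider the right ideal $\rho:=MI$ or rather the commutator construction. The cleaner route mirrors Case 2 of Lemma \ref{lem5}: because $M\subseteq\overline A$ and $\overline A$ is generated by $A$, if $[A,A]\ne 0$ then $[A,A]I\subseteq A$ is a nonzero right ideal inside $A$, and absorbing $I$ on the left as in Lemma \ref{lem5} yields a nonzero (two-sided) ideal inside $A$; then Lemma \ref{lem10} applied to this ideal and $I$ (whose product is nonzero) gives a proper Lie ideal of $R$ sitting inside $[A,I]\subseteq A$. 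If instead $[A,A]=0$, then $A$ is commutative and we are exactly in Case 1 of Lemma \ref{lem5}, which forces $AC=Ca+C$ with $a^2\in Z(R)$ — the second alternative again. So the dichotomy $[A,A]=0$ versus $[A,A]\ne 0$ is what drives the argument, and in the noncommutative branch one lands inside a nonzero ideal and then invokes Lemma \ref{lem10}.

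\medskip

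The main obstacle I anticipate is the bookkeeping in the noncommutative branch: making sure that the ideal (or proper Lie ideal) one extracts actually lies in $A$ and not merely in $\overline A$, and that one uses only $[A,I]\subseteq A$ (the given hypothesis) rather than $[\overline A, I]\subseteq\overline A$ (which would only place things in $\overline A$). The trick, as in Lemma \ref{lem5}, is to build everything from commutators $[A,A]I$ and $I[A,A]$, which are genuinely contained in $A$ via the identities $[a,b]x=-b[a,x]+[a,bx]$ and its mirror, so that the resulting nonzero ideal $\rho$ with $I\rho\subseteq A$ is inside $A$; then $[\rho, I]\subseteq[A,I]\subseteq A$ together with Lemma \ref{lem10} (noting $\rho I\ne 0$ by primeness) delivers a proper Lie ideal of $R$ contained in $A$. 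A secondary check is the degenerate possibility that both alternatives of Lemma \ref{lem5} could a priori occur — but that is harmless, since either conclusion suffices.
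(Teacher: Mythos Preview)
Your overall architecture matches the paper's: pass to $\overline A$, invoke Lemma~\ref{lem5}, and treat the two alternatives separately. Your handling of the two-dimensional alternative is fine. The gap is in the ideal alternative.

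You claim that $[A,A]I\subseteq A$ for the \emph{additive subgroup} $A$, citing the identity $[a,b]x=-b[a,x]+[a,bx]$. But look at the two summands: $[a,bx]\in[A,I]\subseteq A$ is fine, while $b[a,x]$ is a product of two elements of $A$, and nothing in the hypothesis $[A,I]\subseteq A$ forces $A\cdot A\subseteq A$. This is precisely why Lemma~\ref{lem5} is stated for subrings, and it is why its Case~2 computation cannot be replayed verbatim on $A$. So your route to a nonzero ideal \emph{inside $A$} breaks down; you only get one inside $\overline A$, which is exactly the obstacle you flagged but did not overcome.

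The paper sidesteps this entirely, and the tool is the very lemma you already used once at the start. Having obtained a nonzero ideal $M\subseteq\overline A$ from Lemma~\ref{lem5}, the paper does not try to push $M$ into $A$; instead it pushes $[M,I]$ into $A$ via Lemma~\ref{lem8}(iii):
\[
[M,I]\subseteq[\overline A,I]=[A,I]\subseteq A.
\]
The equality $[\overline A,I]=[A,I]$ is the crucial transfer step (valid because $I$ is an ideal), and it costs nothing. Since $MI\ne 0$ by primeness, Lemma~\ref{lem10} gives a proper Lie ideal inside $[M,I]\subseteq A$, and you are done. No dichotomy on $[A,A]$ is needed in this branch.
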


\begin{proof}
Since $\big[A, I\big]\subseteq A$, we get $\big[\overline A, I\big]\subseteq \overline A$. In view of Lemma \ref{lem5}, either $\overline A$ contains a nonzero ideal, say $M$, of $R$ or $\overline AC=Ca+C$ for some $a\in A\setminus Z(R)$ with $a^2\in Z(R)$.

For the latter case, we get $AC\subseteq \overline AC=Ca+C$ and so $[a, IC]\subseteq AC$, that is, $[a, RC]\subseteq AC$ as $IC=RC$.
On the other hand, by Lemma \ref{lem11}, $\dim_C[a, RC]>1$ and hence $AC=Ca+C$. This implies that $[a, RC]\subseteq Ca+C$ and so $[a, RC]= Ca+C$.
Hence $\big[a, [a, RC]\big]=0$, implying $a^2\in C$.

Assume next that $\overline A$ contains a nonzero ideal $M$ of $R$. Then $ [\overline A, I]=[A, I]$ (see Lemma \ref{lem8} (iii)) and so
$$
[M, I]\subseteq [\overline A, I]=[A, I]\subseteq A,
$$
as desired. By the primeness of $R$, we get $MI\ne 0$. In view of Lemma \ref{lem10}, $[M, I]$ contains a proper Lie ideal of $R$, so does $A$.
\end{proof}

\begin{examp}\label{examp4} There exists a Lie ideal $L$ of an exceptional prime ring $R$ such that $L$ is nonabelian of Type II but contains no nonzero ideals of $R$. Indeed, let $R:=\text{\rm M}_2({\Bbb Z}_2[t])$, and let $L:=[R, R]+{\Bbb Z}_2e_{11}$.
In this case, $C={\Bbb Z}_2(t)$ and $RC=\text{\rm M}_2({\Bbb Z}_2(t))$.
Then $L$ is a Lie ideal of $R$ satisfying $LC=RC$. Thus $L$ is nonabelian of Type II.
We claim that $L$ contains no nonzero ideals of $R$. Otherwise, assume that $L$ contains a nonzero ideal $I$ of $R$.
Then there exists a nonzero polynomial $g(t)\in {\Bbb Z}_2[t]$ such that $I=\text{\rm M}_2(g(t){\Bbb Z}_2[t])$.
Then $g(t)e_{11}\in I\subseteq L$, implying that $g(t)e_{11}=z+e_{11}$ for some $z\in [R, R]$. Thus $g(t)=1$. This implies that
$I=R$ and so $L=R$, a contradiction.
\end{examp}

\section{Problem \ref{problem2}}
Let $R$ be a prime ring with extended centroid $C$.
A map $\phi\colon RC\to RC$ is called a {\it generalized linear map} on $RC$ if there exist finitely many $a_i, b_i\in RC$ such that
$
\phi(x)=\sum_ia_ixb_i
$
for $x\in RC$. Let
$
\mathfrak{L}(RC)
$
denote the set of all generalized linear maps on $RC$. Clearly, if $\phi, \eta\in \mathfrak{L}(RC)$, then $\phi+\eta, \phi\eta\in \mathfrak{L}(RC)$, where $\phi\eta$ is the composition of $\phi$ and $\eta$, that is, $\phi\eta(x):=\phi(\eta(x))$ for $x\in RC$.
Thus $\mathfrak{L}(RC)$ forms an algebra over $C$. In fact, applying Martindale's theorem (see \cite[Theorem 2]{martindale1969}) we have
$$
RC\otimes_C(RC)^{op}\cong \mathfrak{L}(RC)
$$
via the canonical map
$$
\sum_ia_i\otimes b_i\mapsto \phi,
$$
where $\phi(x):=\sum_ia_ixb_i$ for $x\in RC$. Note that, in $RC\otimes_C(RC)^{op}$, $\sum_ia_i\otimes b_i=0$ iff $\sum_ib_i\otimes a_i=0$.
Hence, given $\phi\in \mathfrak{L}(RC)$, where $\phi(x)=\sum_ia_ixb_i$ for $x\in RC$, we can define $\phi^*$ by
$$
\phi^*(x)=\sum_ib_ixa_i
$$
for $x\in RC$. Clearly, the map $*$ on $\mathfrak{L}(RC)$ is well-defined. Moreover, given $\phi, \eta\in \mathfrak{L}(RC)$, we have
$$
(\phi+\eta)^*=\phi^*+\eta^*, \ \ (\phi^*)^*=\phi, \ \text{\rm and}\ \ (\phi\eta)^*=\eta^*\phi^*.
$$
That is, $*$ is an involution on $\mathfrak{L}(RC)$.
For $a_1,\ldots,a_n, x\in RC$, let
$$
[a_1,\ldots,a_n, x]:=\ad_{a_1}\cdots\ad_{a_n}(x).
$$
Clearly,
$$
\ad_{a_1}^*=-\ad_{a_1}\ \text{\rm and}\ \ (\ad_{a_1}\cdots\ad_{a_n})^*=(-1)^n\ad_{a_n}\cdots\ad_{a_1}.
$$
Moreover, let $\delta$ be a derivation of $R$. Then
$$
\delta\,\ad_b=\ad_{\delta(b)}+\ad_b\delta
$$
for all $b\in RC$.
We need a preliminary result, which is a special case of \cite[Lemma 2]{kharchenko1978}.\vskip 4pt

\begin{pro} (Kharchenko) \label{pro3}
Let $R$ be a prime ring with derivations $\delta$ and $d$, and $\phi, \varphi, \eta\in \mathfrak{L}(RC)$.

(i)\ If $\delta$ is X-outer, then $\phi\delta+\varphi =0$ implies that $\phi=0$ and $\varphi=0$.

(ii)\ If $\delta$ and $d$ are $C$-independent modulo X-inner derivations, then $\phi\delta+\varphi d+\eta=0$ implies that
$\phi=0$, $\varphi=0$ and $\eta=0$.
\end{pro}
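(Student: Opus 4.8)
The plan is to read off both statements from Kharchenko's theory of differential identities in prime rings, whose governing principle is: in a prime ring a differential identity involving derivations that are $C$-independent modulo X-inner derivations remains valid after each image $\delta(x)$ is replaced by a fresh indeterminate, independent of $x$. I would work inside the centrally closed prime $C$-algebra $RC$ (whose extended centroid is again $C$), so that the isomorphism $RC\otimes_C(RC)^{op}\cong\mathfrak{L}(RC)$ recorded earlier is available; alternatively one may argue over $R$ and invoke \cite[Theorem 2]{lee1992} that $R$ and $RC$ satisfy the same differential identities. Two routine remarks are needed at the outset: the extension of $\delta$ (and of $d$) to $Q_s(R)\supseteq RC$ preserves X-outerness, and a single X-outer derivation is automatically $C$-independent modulo X-inner derivations, since if $\lambda\delta=\ad_q$ for some nonzero $\lambda\in C$ then $\delta=\ad_{\lambda^{-1}q}$ would be X-inner.

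First I would prove (i). Write $\phi(x)=\sum_i a_ixb_i$ and $\varphi(x)=\sum_j c_jxd_j$ with all coefficients in $RC$. The assumption $\phi\delta+\varphi=0$ unwinds to the differential identity
$$
\sum_i a_i\,\delta(x)\,b_i+\sum_j c_j x d_j=0\qquad(x\in RC).
$$
Since $\delta$ is X-outer, Kharchenko's theorem converts this into the generalized polynomial identity
$$
\sum_i a_i y b_i+\sum_j c_j x d_j=0\qquad(x,y\in RC),
$$
in which $x$ and $y$ are now unrelated. Putting $x=0$ yields $\sum_i a_i y b_i=0$ for all $y\in RC$, hence $\phi=0$ by the translation between vanishing of $\sum_i a_iyb_i$ on $RC$ and vanishing of $\sum_i a_i\otimes b_i$ in $RC\otimes_C(RC)^{op}$; putting $y=0$ yields $\varphi=0$.

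Part (ii) is the same argument with one more slot. Writing in addition $\eta(x)=\sum_k e_kxf_k$, the hypothesis $\phi\delta+\varphi d+\eta=0$ becomes
$$
\sum_i a_i\,\delta(x)\,b_i+\sum_j c_j\,d(x)\,d_j+\sum_k e_k x f_k=0\qquad(x\in RC).
$$
The assumption that $\delta$ and $d$ are $C$-independent modulo X-inner derivations is precisely the hypothesis of Kharchenko's theorem, which now produces the generalized polynomial identity
$$
\sum_i a_i y b_i+\sum_j c_j z d_j+\sum_k e_k x f_k=0\qquad(x,y,z\in RC)
$$
in three mutually independent indeterminates. Setting $x=z=0$, then $x=y=0$, then $y=z=0$ forces $\phi=0$, $\varphi=0$ and $\eta=0$ in turn.

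The substantive input is Kharchenko's linearization result \cite[Lemma 2]{kharchenko1978}; once it is granted, everything reduces to the formal bookkeeping above together with Martindale's correspondence already recorded in the paper. The one genuinely delicate point, and the step I expect to be the main obstacle to make airtight, is the passage from ``the relation holds for every $x$ in the prime ring'' to ``the relation holds for mutually independent indeterminates'': this must not be taken for granted, it is exactly what the cited lemma supplies, and one has to check that its hypotheses (X-outerness of $\delta$ in (i), $C$-independence of $\{\delta,d\}$ modulo X-inner derivations in (ii)) are intact after extending the derivations to $Q_s(R)$. A self-contained proof of (i) is also available --- substitute $x\mapsto xr$ in the original relation, expand via $\delta(xr)=\delta(x)r+x\delta(r)$, cancel the $r$-translates of the original relation, and use primeness to conclude that $\delta$ cannot be absorbed into the generalized linear coefficients --- but the route through Kharchenko's theorem is shorter and handles (i) and (ii) uniformly.
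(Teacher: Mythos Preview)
Your proposal is correct and follows the same route as the paper: the paper does not supply its own proof of Proposition~\ref{pro3} but simply records it as a special case of \cite[Lemma 2]{kharchenko1978}, and your argument spells out exactly how to extract the stated form from Kharchenko's linearization result. The bookkeeping you give (writing out the differential identity, replacing $\delta(x)$ and $d(x)$ by fresh indeterminates, then specializing) is the standard way to read off such consequences and matches the intended derivation.
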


\begin{thm}\label{thm32}
Let $R$ be a prime ring, $\phi\in \mathfrak{L}(RC)$.

(i)\
$
 \phi([x, y])=0
$
for all $x, y\in RC$ iff $\phi^*(y)\in C$ for all $y\in RC$.

(ii)\ If $d\phi=0$ for some X-derivation $d$, then $\phi=0$.
\end{thm}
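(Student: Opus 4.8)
The plan is to argue entirely inside the algebra $\mathfrak{L}(RC)$ (identified with $RC\otimes_C(RC)^{op}$ via Martindale's theorem), using the involution $*$ for part~(i) and Kharchenko's separation result, Proposition~\ref{pro3}, for part~(ii).

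For (i), the first step is to note that for each fixed $x\in RC$ the map $y\mapsto\phi([x,y])=\phi(xy)-\phi(yx)$ is again a generalized linear map on $RC$; in fact it is the composite $\phi\,\ad_x$ in $\mathfrak{L}(RC)$, where $\ad_x\in\mathfrak{L}(RC)$ sends $y$ to $[x,y]$. Hence $\phi([x,y])=0$ for all $x,y\in RC$ is equivalent to $\phi\,\ad_x=0$ in $\mathfrak{L}(RC)$ for every $x\in RC$. Now apply $*$: since $\ad_x^*=-\ad_x$ and $(\phi\,\ad_x)^*=\ad_x^*\,\phi^*=-\ad_x\,\phi^*$, and $*$ is a bijection of $\mathfrak{L}(RC)$, the previous condition holds iff $\ad_x\,\phi^*=0$ for every $x\in RC$, that is, $[x,\phi^*(y)]=0$ for all $x,y\in RC$. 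Since the centralizer of $R$ in $Q_s(R)$ equals $C$, we have $Z(RC)=C$, so this last condition says exactly that $\phi^*(y)\in C$ for all $y\in RC$. This settles both implications of (i) at once.

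For (ii), the idea is to split off a $\mathfrak{L}(RC)$-part using the Leibniz rule and then quote Proposition~\ref{pro3}(i). Write $\phi(x)=\sum_i a_ixb_i$ with $a_i,b_i\in RC$, and extend the X-outer derivation $d$ to a derivation of $RC$ with $d(C)\subseteq C$. Then for every $y\in RC$,
\[
d\big(\phi(y)\big)=\sum_i d(a_i)yb_i+\sum_i a_iyd(b_i)+\sum_i a_id(y)b_i=\eta(y)+\phi\big(d(y)\big),
\]
where $\eta(y):=\sum_i d(a_i)yb_i+\sum_i a_iyd(b_i)$ again lies in $\mathfrak{L}(RC)$. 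Thus the hypothesis $d\phi=0$ becomes $\phi d+\eta=0$ as maps on $RC$, and since $d$ is X-outer, Proposition~\ref{pro3}(i) forces $\phi=0$ (and $\eta=0$).

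None of the computations is deep. The only points needing a little care are that each object produced above ($\phi\,\ad_x$, $\ad_x\,\phi^*$, and the correction term $\eta$) genuinely belongs to $\mathfrak{L}(RC)$, so that vanishing as a map coincides with being the zero element, together with the identification $Z(RC)=C$. I do not expect a genuine obstacle: the entire content of the argument is the choice of framework---the involution for (i) and Kharchenko's theorem for (ii).
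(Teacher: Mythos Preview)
Your proposal is correct and follows essentially the same route as the paper: for (i) both arguments rewrite the hypothesis as $\phi\,\ad_x=0$ and apply the involution $*$ to obtain $\ad_x\,\phi^*=0$, hence $\phi^*(RC)\subseteq C$; for (ii) both expand $d\phi(x)$ by the Leibniz rule into a term $\phi(d(x))$ plus a generalized linear map in $x$, and then invoke Proposition~\ref{pro3}(i) to conclude $\phi=0$.
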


\begin{proof}
Since $\phi\in \mathfrak{L}(RC)$, there exist finitely many $a_i, b_i\in RC$ such that $\phi(x)=\sum_ia_ixb_i$ for $x\in RC$.

(i)\ We have
\begin{eqnarray*}
\phi([x, y])=0\ \forall x, y\in RC             &\Leftrightarrow& \sum_ia_i[x, y]b_i=0\ \forall x, y\in RC\\
                                                                      &\Leftrightarrow& \phi\,\ad_x=0\ \forall x\in RC \\
                                                                      &\Leftrightarrow&  (\phi\,\ad_x)^*=0\ \forall x\in RC \\
                                                                      &\Leftrightarrow&  -\ad_x\,\phi^*=0\ \forall x\in RC \\
                                                                      &\Leftrightarrow&  \sum_ib_iya_i\in C\ \ \forall y\in R\\
                                                                      &\Leftrightarrow&   \phi^*(y)\in C\ \ \forall y\in RC.
\end{eqnarray*}

(ii)\ Since $d\phi=0$, we have
$$
d\phi(x)=\sum_i d(a_i)xb_i+\sum_ia_i d(x)b_i+\sum_ia_ix d(b_i)=0
$$
for all $x\in RC$. In view of Proposition \ref{pro3} (i), we get $\sum_ia_iyb_i=0$ for all $y\in RC$. So $\phi=0$.
\end{proof}

As an application of Theorem \ref{thm32}, we get the following.

\begin{lem}\label{lem17}
Let $R$ be a prime ring, and $a_{jk}\in RC$, $1\leq j\leq m$, $1\leq k\leq n_j$. Then
$$
\sum_{j=1}^m\big[a_{j1},\ldots,a_{jn_j}, [x, y]\big]=0\ \forall x, y\in RC\ \text{\rm iff}\ \ \sum_{j=1}^m(-1)^{n_j}\big[a_{jn_j},\ldots,a_{j1}, z\big]\in C\ \forall z\in RC.
$$
\end{lem}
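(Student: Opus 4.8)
The plan is to package the left-hand side as a single generalized linear map and then invoke Theorem \ref{thm32}~(i). Set
\[
\phi:=\sum_{j=1}^m\ad_{a_{j1}}\cdots\ad_{a_{jn_j}}.
\]
First I would note that $\phi\in\mathfrak{L}(RC)$: each $\ad_{a}$ lies in $\mathfrak{L}(RC)$ since $\ad_a(x)=ax+(-1)xa$, and $\mathfrak{L}(RC)$ is closed under composition and finite sums, as recorded in Section 5. With this notation the hypothesis on the left is precisely the statement $\phi([x,y])=0$ for all $x,y\in RC$.

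Next I would apply Theorem \ref{thm32}~(i) directly to this $\phi$: the condition $\phi([x,y])=0$ for all $x,y\in RC$ is equivalent to $\phi^*(z)\in C$ for all $z\in RC$. It remains to identify $\phi^*$. Using that $*$ is additive on $\mathfrak{L}(RC)$ together with the formula $(\ad_{a_{j1}}\cdots\ad_{a_{jn_j}})^*=(-1)^{n_j}\ad_{a_{jn_j}}\cdots\ad_{a_{j1}}$ recorded just before Proposition \ref{pro3}, I obtain
\[
\phi^*=\sum_{j=1}^m(-1)^{n_j}\ad_{a_{jn_j}}\cdots\ad_{a_{j1}},
\]
so that $\phi^*(z)=\sum_{j=1}^m(-1)^{n_j}\big[a_{jn_j},\ldots,a_{j1}, z\big]$ for every $z\in RC$. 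Combining this with the equivalence from Theorem \ref{thm32}~(i) yields exactly the asserted iff statement.

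There is essentially no genuine obstacle here; the lemma is a formal consequence of the involution calculus on $\mathfrak{L}(RC)$ and of Theorem \ref{thm32}. The only points requiring a line of care are the verification that $\phi\in\mathfrak{L}(RC)$ (so that Theorem \ref{thm32} applies at all) and the bookkeeping of signs when distributing $*$ over the sum and reversing each composition of adjoint maps, both of which are immediate from the properties of $*$ listed in Section 5.
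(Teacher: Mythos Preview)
Your proposal is correct and mirrors the paper's own proof essentially line for line: define $\phi=\sum_{j}\ad_{a_{j1}}\cdots\ad_{a_{jn_j}}\in\mathfrak{L}(RC)$, invoke Theorem~\ref{thm32}~(i), and identify $\phi^*$ via the involution formula $(\ad_{a_{j1}}\cdots\ad_{a_{jn_j}})^*=(-1)^{n_j}\ad_{a_{jn_j}}\cdots\ad_{a_{j1}}$.
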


\begin{proof}
Let $\phi:=\sum_{j=1}^m\ad_{a_{j1}}\cdots\ad_{a_{jn_j}}$. Then $\phi^*=\sum_{j=1}^m(-1)^{n_j}\ad_{a_{jn_j}}\cdots\ad_{a_{j1}}$.
We have
\begin{eqnarray*}
\sum_{j=1}^m\big[a_{j1},\ldots,a_{jn_j}, [x, y]\big]=0\ \forall x, y\in RC&\Leftrightarrow& \phi([RC, RC])=0\\
                                                                     &\Leftrightarrow&  \phi^*(RC)\subseteq C\ (\text{\rm by Theorem \ref{thm32}})\\
                                                                     &\Leftrightarrow&\sum_{j=1}^m(-1)^{n_j}\ad_{a_{jn_j}}\cdots\ad_{a_{j1}}(RC)\subseteq C\\
                                                                     &\Leftrightarrow&\sum_{j=1}^m(-1)^{n_j}\big[a_{jn_j},\ldots,a_{j1}, z\big]\in C\ \forall z\in RC,
 \end{eqnarray*}
as desired.
\end{proof}

We next turn to the following theorem.

\begin{thm}\label{thm25}
Let $R$ be an exceptional prime ring, $a_1,\ldots,a_n\in R$, and let $I$ be a nonzero ideal of $R$.
Then $\big[a_1, a_2,\ldots,a_n, [I, I]\big]\subseteq Z(R)$ iff one of $a_1,\ldots,a_n$ lies in $[RC, RC]$.
\end{thm}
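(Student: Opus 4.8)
The plan is to reduce everything to the algebra $\mathfrak L(RC)$ of generalized linear maps and the involution $*$, using the machinery of Section~5. Since $RC$ is a $4$-dimensional central simple $C$-algebra, write $\overline C$ for the algebraic closure of $C$ and identify $RC\otimes_C\overline C\cong\text{\rm M}_2(\overline C)$; one works over $\overline C$ whenever a trace argument is convenient (as in Lemma~\ref{lem6}). First I would dispose of the easy direction. Suppose some $a_i\in[RC,RC]$, i.e.\ $a_i^2\in Z(R)$ by Lemma~\ref{lem6}. The key observation is that for $b\in[RC,RC]$ with $b^2\in C$ and any $u\in RC$, $\text{\rm ad}_b^2(u)=b^2u-2bub+ub^2=0$ since $\text{\rm char}\,R=2$; hence $\text{\rm ad}_{a_i}^2=0$. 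Moreover $\text{\rm ad}_{a_i}(RC)\subseteq[b,RC]$, and in $\text{\rm M}_2(\overline C)$ the image of $\text{\rm ad}_b$ for a nonzero trace-zero $b$ is a $2$-dimensional space on which $\text{\rm ad}_b$ acts nilpotently, so $\text{\rm ad}_{a_i}^2(RC)=0$ forces $\text{\rm ad}_{a_i}(\text{\rm ad}_{a_i}(RC))=0$; in particular the innermost application kills $[I,I]$ after one more bracket because $[RC,RC]$ is exactly the trace-zero part. More directly: $[a_i,[RC,RC]]\subseteq[[RC,RC],[RC,RC]]\subseteq Z(R)$ by Lemma~\ref{lem20}, and since $[a_{i+1},\ldots,a_n,[I,I]]\subseteq[RC,RC]$ after passing to $RC$ (each bracket with anything lands in $[RC,RC]$, this being an ideal of the Lie algebra $RC$), the whole expression $\big[a_1,\ldots,a_n,[I,I]\big]$ is contained in $\big[a_1,\ldots,a_i,[RC,RC]\big]\subseteq Z(R)$ once we reach the $a_i$. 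I would present this last, clean version.

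For the hard direction, assume $\big[a_1,\ldots,a_n,[I,I]\big]\subseteq Z(R)$ and suppose, for contradiction, that no $a_i$ lies in $[RC,RC]$, i.e.\ $\text{\rm tr}(a_i)\ne0$ in $\text{\rm M}_2(\overline C)$ for every $i$. By Lemma~\ref{lem8}(iii) and the fact that $I$ and $R$ satisfy the same GPIs (the Chuang-type result cited in Lemma~\ref{lem5}), $\big[a_1,\ldots,a_n,[RC,RC]\big]\subseteq Z(R)$; equivalently, for the map $\phi:=\text{\rm ad}_{a_1}\cdots\text{\rm ad}_{a_n}\in\mathfrak L(RC)$ we have $\phi([RC,RC])\subseteq C\cap\phi([RC,RC])$. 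Taking one more commutator, $\big[RC,\phi([RC,RC])\big]=0$, so $\phi([RC,RC])\subseteq C$, which by Theorem~\ref{thm32}(i) is equivalent to $\phi^*(RC)\subseteq C$, i.e.\ $(-1)^n\text{\rm ad}_{a_n}\cdots\text{\rm ad}_{a_1}(RC)\subseteq C$, so $\text{\rm ad}_{a_n}\cdots\text{\rm ad}_{a_1}(RC)=0$ (the image is a commutator, hence trace-zero, hence lands in $C$ only if it is $0$ — wait, $C$ itself need not be trace-zero, so instead: $\text{\rm ad}_{a_n}\cdots\text{\rm ad}_{a_1}(RC)\subseteq C$ means the $(n{-}1)$-fold composite is central, and then by Lemma~\ref{lem8}(ii) applied repeatedly, or directly, $\text{\rm ad}_{a_{n-1}}\cdots\text{\rm ad}_{a_1}(RC)$ is mapped into $C$ by $\text{\rm ad}_{a_n}$, forcing it into $Ca_n+C$ by Lemma~\ref{lem2} if $a_n\notin C$). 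This is the crux: I want to peel off the $a_i$ one at a time and derive at some stage that the remaining $a_i$ generate something central, contradicting $a_i\notin Z(R)$ (note $a_i\notin[RC,RC]$ does not yet say $a_i\notin Z(R)$ — but $Z(R)\subseteq[RC,RC]$ is false; however if $a_i\in Z(R)$ the expression trivially collapses and we'd be done differently, so WLOG each $a_i\notin Z(R)$).

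The main obstacle is organizing this peeling-off inductively in a way that genuinely uses $\text{\rm char}\,R=2$ and $\dim_CRC=4$ and does not secretly reprove something false. The right framework: work inside $\text{\rm M}_2(\overline C)$ and use that $\text{\rm ad}_a$ for $a$ with $\text{\rm tr}(a)\ne0$ is a semisimple operator with eigenvalues $0,0,\lambda,-\lambda$ where $\lambda^2=\text{\rm tr}(a)^2-4\det(a)$ (the discriminant), and when $\text{\rm char}=2$ this is $\text{\rm tr}(a)^2$, so $\lambda=\text{\rm tr}(a)\ne0$; thus $\text{\rm ad}_a$ is diagonalizable with a $2$-dimensional kernel (spanned by $1$ and $a$) and a $2$-dimensional $\lambda$-eigenspace — note $\lambda=-\lambda$ so there is no separate $-\lambda$ part. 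Consequently the composite $\text{\rm ad}_{a_n}\cdots\text{\rm ad}_{a_1}$ is a product of operators each of which is, up to the scalar $\lambda_i=\text{\rm tr}(a_i)$, a projection onto a $2$-dimensional subspace; for such a product to map all of $\text{\rm M}_2(\overline C)$ into the $2$-dimensional space $C+Ca_n$ (let alone into $C$) forces, after at most two steps, a collapse: two successive $2$-dimensional eigenspaces must coincide, giving an algebraic relation among $a_1,\dots,a_n$ that I can contradict using primeness (via $I$ and $R$ satisfying the same GPIs, Lemma~\ref{lem5}). I would carry this out by induction on $n$: the base cases $n=1$ ($[a_1,[I,I]]\subseteq Z(R)$, so $a_1\in Z(R)$ by Lemma~\ref{lem8}(i) — but then $a_1\in Z(R)\subseteq$? no, rather $a_1$ central contradicts $a_1\notin[RC,RC]$? again no) — so actually the cleanest base case uses Lemma~\ref{lem2} and Lemma~\ref{lem6} to show $n=1,2$ directly, and the inductive step applies the $*$-involution trick above to strip $a_n$, reducing to the statement for $a_1,\dots,a_{n-1}$ acting on $[RC,RC]$ with values in $C+Ca_n$, which by Lemma~\ref{lem2}-type reasoning and dimension count collapses to values in $C$, invoking the inductive hypothesis. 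I expect the delicate point to be the very first reduction — justifying $\phi([RC,RC])\subseteq C$ from $\phi([RC,RC])\subseteq Z(R)$ — and the final GPI contradiction, where one must be careful that the polynomial identity forced on $RC$ genuinely fails in $\text{\rm M}_2$.
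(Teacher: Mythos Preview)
Your easy direction is fine once you settle on the ``clean version'': since $[a_{i+1},\ldots,a_n,[I,I]]\subseteq[RC,RC]$ and $a_i\in[RC,RC]$, Lemma~\ref{lem20} gives $[a_i,[RC,RC]]\subseteq C$, and the outer brackets preserve centrality. This is exactly what the paper does.

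The hard direction, however, has a real gap and misses a much simpler argument. Your $*$-involution step does not work as written: Theorem~\ref{thm32}(i) converts $\phi([RC,RC])=0$ into $\phi^*(RC)\subseteq C$, but your hypothesis is only $\phi([RC,RC])\subseteq C$, not $=0$. If you compose with $\text{\rm ad}_z$ first to kill the central value and then apply $*$, you get back $\phi^*([RC,RC])\subseteq C$, which is the same statement with the $a_i$ in reversed order --- no progress. The eigenvalue analysis over $\overline C$ is a plausible alternative, but you leave it as a sketch with no clear termination. You also confuse yourself in the base case: in characteristic~$2$ the identity matrix has trace zero, so $C\subseteq[RC,RC]$, and therefore $a_1\in Z(R)$ \emph{does} give $a_1\in[RC,RC]$ --- there is no contradiction to hunt for there.

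The paper's argument is a five-line induction that you overlooked. The key observation is that $\dim_C RC/[RC,RC]=1$, so if $a_1\notin[RC,RC]$ then $RC=[RC,RC]+Ca_1$. Setting $W:=[a_2,\ldots,a_n,[RC,RC]]\subseteq[RC,RC]$, the hypothesis says $[a_1,W]\subseteq C$, while Lemma~\ref{lem20} gives $[[RC,RC],W]\subseteq C$ automatically. Adding these, $[RC,W]\subseteq C$, so $W\subseteq C$ by Lemma~\ref{lem8}(ii), and the inductive hypothesis applies to $a_2,\ldots,a_n$. No GPIs, no involution, no passage to $\overline C$.
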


\begin{proof}
Clearly, if one of $a_1,\ldots,a_n$ lies in $[RC, RC]$, it follows from Lemma \ref{lem20} that
$$
\big[a_1, a_2,\ldots,a_n, [I, I]\big]=\big[a_1, [a_2, [\cdots, [a_n, [I, I]]\cdots]]\big]\subseteq Z(R).
$$
Conversely, assume that
$
\big[a_1, a_2,\ldots,a_n, [I, I]\big]\subseteq Z(R).
$
Note that $IC=RC$. We get
$
\big[a_1, a_2,\ldots,a_n, [RC, RC]\big]\subseteq C.
$
We proceed the proof by induction $n$.
Suppose that $a_1\notin [RC, RC]$.
Then
$RC=[RC, RC]+Ca_1$.
By Lemma \ref{lem20}
$$
\big[[RC, RC], \big[a_2,\ldots,a_n, [RC, RC]\big]\big]\subseteq C,
$$
this implies that
 \begin{eqnarray*}
&&\big[RC, \big[a_2,\cdots,a_n, [RC, RC]\big]\big]\\
&=&\big[Ca_1+[RC, RC], \big[a_2,\ldots,a_n, [RC, RC]\big]\big]\\
&\subseteq&\big[Ca_1, \big[a_2,\ldots,a_n, [RC, RC]\big]\big]+\big[[RC, RC], \big[a_2,\ldots,a_n, [RC, RC]\big]\big]\\
&\subseteq &C.
\end{eqnarray*}
By Lemma \ref{lem8} (ii), we get
$
\big[a_2,\ldots,a_n, [RC, RC]\big]\subseteq C.
$
By induction on $n$, one of $a_2,\ldots,a_n$ lies in $[RC, RC]$.
\end{proof}

\begin{thm}\label{thm28}
Let $R$ be an exceptional prime ring, $a_1,\ldots,a_n\in R$ with $a_n\notin Z(R)$, $n\geq 2$ and let $I$ be a nonzero ideal of $R$.
Then $\big[a_1, a_2,\cdots,a_n, I\big]\subseteq Z(R)$ iff one of $a_1,\ldots,a_{n-1}$ lies in $[RC, RC]$.
\end{thm}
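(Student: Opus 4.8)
Theorem \ref{thm28} is a variant of Theorem \ref{thm25} where $[I,I]$ is replaced by the ideal $I$ itself, at the cost of requiring $a_n\notin Z(R)$. The plan is to pass to $RC$ via the fact that $I$ and $RC$ satisfy the same generalized polynomial identities (so $\big[a_1,\dots,a_n, I\big]\subseteq Z(R)$ becomes $\big[a_1,\dots,a_n, RC\big]\subseteq C$), and then to reduce to Theorem \ref{thm25} by absorbing the innermost commutator. Concretely, the ``if'' direction is immediate: if some $a_i\in[RC,RC]$ with $i\le n-1$, then $\big[a_i,\dots,a_n, RC\big]\subseteq [RC,RC]$ (since $[RC,RC]$ is a Lie ideal of $RC$), and by Lemma \ref{lem20}, $\big[a_{i-1},\dots,a_n,RC\big]$ lies inside $\big[[RC,RC],[RC,RC]\big]\subseteq C$, whence the whole iterated commutator is central.

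For the ``only if'' direction, I would first handle the innermost layer. Writing $\delta:=\ad_{a_1}\cdots\ad_{a_{n-1}}$, the hypothesis says $\delta\big([a_n, RC]\big)\subseteq C$, i.e. $\delta\,\ad_{a_n}(RC)\subseteq C$. Since $a_n\notin Z(R)$, Lemma \ref{lem11} (applied with $I=R$) gives $\dim_C[a_n,RC]>1$; more to the point, $[a_n,RC]$ is a noncentral Lie ideal of $RC$, hence a noncentral Lie ideal of the exceptional simple ring $RC$. By Lemma \ref{lem19}(i), since $[a_n,RC]=[a_n,RC]\cdot C$ is abelian exactly when $a_n^2\in Z(R)$ — I need to split into the two cases. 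If $a_n^2\notin Z(R)$, then $[a_n,RC]$ is a nonabelian Lie ideal of $RC$, so $[a_n,RC]\supseteq[RC,RC]$ (it equals $[RC,RC]$ or $RC$), giving $\big[a_1,\dots,a_{n-1},[RC,RC]\big]\subseteq C$ directly, and Theorem \ref{thm25} finishes the job. If $a_n^2\in Z(R)$, then by Lemma \ref{lem6} $a_n\in[RC,RC]$, and since $[a_n,RC]$ is then abelian we have $\overline{[a_n,RC]}=C a_n+C$ is a commutative subring; here I would instead argue that $[RC,RC]=[a_n,RC]+[b,RC]$ for a suitable $b$ (or use that $a_n\in[RC,RC]$ together with a dimension count in $\dim_C RC=4$) to reduce to the previous situation, and again invoke Theorem \ref{thm25}.

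A cleaner route for the ``only if'' direction that avoids the case split: note that for any $r\in RC$, the Jacobi identity gives $[a_n, r^2] = [a_n,r]r + r[a_n,r]$, but I think the efficient move is simply to observe that $RC = C\cdot 1 + [RC,RC]$ (since $\dim_C RC/[RC,RC]=1$), so $[a_n, RC]=[a_n,[RC,RC]]$, and therefore the hypothesis reads $\big[a_1,\dots,a_{n-1},\,[a_n,[RC,RC]]\big]\subseteq C$, i.e. $\big[a_1,\dots,a_{n-1},a_n,\,[RC,RC]\big]\subseteq C$ after reindexing — wait, that is $\big[a_1,\dots,a_n,[RC,RC]\big]\subseteq C$ with $[I,I]$ replaced by $[RC,RC]$. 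Then Theorem \ref{thm25} (with $I=R$) applies and yields that one of $a_1,\dots,a_n$ lies in $[RC,RC]$. The remaining task is to rule out $a_n\in[RC,RC]$: if $a_n\in[RC,RC]$ then, since $a_n\notin Z(R)$, $a_n\ne 0$, and $[a_n,RC]$ is a $2$-dimensional subspace; one then re-runs the argument on $\big[a_1,\dots,a_{n-1},[a_n,RC]\big]\subseteq C$ — since $[a_n,RC]$ spans together with $a_n$ and $1$ all of $RC$ (a dimension count: $Ca_n+[a_n,RC]+C = RC$ when $a_n\in[RC,RC]\setminus\{0\}$, because $\dim_C RC=4$ and these pieces are independent), and $[a_n,[a_n,RC]]\subseteq[a_n,RC]$ plus $[a_n,a_n]=0$ give $[a_1,\dots,a_{n-1},RC]\subseteq C$, so by Lemma \ref{lem8}(ii) $\big[a_2,\dots,a_{n-1},RC\big]$ is central and induction on $n$ forces one of $a_1,\dots,a_{n-1}$ into $[RC,RC]$. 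The base case $n=2$ is: $\big[a_1,[a_2,RC]\big]\subseteq C$ with $a_2\notin Z(R)$ forces $a_1\in[RC,RC]$, which follows from Lemma \ref{lem2} (so $a_1\in Ca_2+C$) together with the case analysis on $a_2^2$ — the delicate point being that when $a_2^2\notin Z(R)$ one must use $[a_2,RC]\supseteq[RC,RC]$ to conclude. The main obstacle I anticipate is exactly this bookkeeping around the exceptional case $a_n\in[RC,RC]\setminus Z(R)$, where the "spanning" dimension counts in the $4$-dimensional algebra $RC$ must be made carefully (using that $\{1, a_n\}$ together with $[a_n,RC]$ really do span, which relies on $a_n^2\in C$ and $\mathrm{char}\,RC=2$).
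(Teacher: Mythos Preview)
Your proposal has genuine gaps in the ``only if'' direction. In your first approach, the claim that $[a_n,RC]$ is a Lie ideal of $RC$ (let alone one containing $[RC,RC]$) when $a_n^2\notin Z(R)$ is false: take $RC=\text{M}_2(C)$, $a_n=e_{11}$; then $[e_{11},\text{M}_2(C)]=Ce_{12}+Ce_{21}$, which is not a Lie ideal and does not contain $[\text{M}_2(C),\text{M}_2(C)]=C\cdot 1+Ce_{12}+Ce_{21}$. In your ``cleaner route'', the reduction $[a_n,RC]=[a_n,[RC,RC]]$ and the application of Theorem~\ref{thm25} are correct, but the handling of the residual case $a_n\in[RC,RC]\setminus C$ breaks: you assert $Ca_n+[a_n,RC]+C=RC$ by a dimension count, yet Lemma~\ref{lem19}(i) gives $[a_n,RC]=Ca_n+C$ in exactly this situation, so $Ca_n+[a_n,RC]+C=Ca_n+C$ is only $2$-dimensional. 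Consequently, your deduction that $[a_1,\ldots,a_{n-1},RC]\subseteq C$ does not follow.

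The paper's proof avoids all of this by peeling from the \emph{outside} rather than the inside. Assuming none of $a_1,\ldots,a_{n-1}$ lies in $[RC,RC]$, one has $RC=Ca_1+[RC,RC]$. Since $n\ge 2$, the iterated commutator $[a_2,\ldots,a_n,I]$ already lies in $[RC,RC]$, so Lemma~\ref{lem20} gives $\big[[RC,RC],[a_2,\ldots,a_n,I]\big]\subseteq C$; combined with the hypothesis $\big[a_1,[a_2,\ldots,a_n,I]\big]\subseteq C$, this yields $\big[RC,[a_2,\ldots,a_n,I]\big]\subseteq C$, hence $[a_2,\ldots,a_n,I]\subseteq C$ by Lemma~\ref{lem8}(ii). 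Induction then forces $[a_n,I]\subseteq C$, i.e.\ $a_n\in Z(R)$, a contradiction. This outside-in direction is the missing idea: it uses the decomposition $RC=Ca_i+[RC,RC]$ for the outermost $a_i\notin[RC,RC]$, where the inner block is already known to sit inside $[RC,RC]$, rather than trying to enlarge $[a_n,RC]$ from the inside.
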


\begin{proof}
By Lemma \ref{lem20}, the part ``$\Leftarrow$" is clear.

``$\Rightarrow$"\ \ Assume that $\big[a_1, a_2,\cdots,a_n, I\big]\subseteq Z(R)$.
Suppose that none of $a_1,\ldots,a_{n-1}$ lies in $[RC, RC]$.
Then $RC=Ca_1+[RC, RC]$. Since $n\geq 2$, by Lemma \ref{lem20}  we get
$$
\big[[RC, RC],\big[a_2,\ldots,a_n, I\big]\big]\subseteq C
$$
and so
 \begin{eqnarray*}
&&\big[RC, \big[a_2,\ldots,a_n, I\big]\big]\\
&\subseteq&\big[Ca_1, \big[a_2,\ldots,a_n, I\big]\big]+\big[[RC, RC], \big[a_2,\ldots,a_n, I\big]\big]\subseteq C.\\
\end{eqnarray*}
In view of Lemma \ref{lem8} (ii), $\big[a_2,\cdots,a_n, I\big]\subseteq C$. By induction, we get
$[a_n, I]\subseteq C$ and so $a_n\in Z(R)$, a contradiction.
\end{proof}

\begin{thm}\label{thm31}
Let $R$ be an exceptional prime ring, $a_1,\ldots,a_n\in R$ with $a_1\notin Z(R)$, $n\geq 2$ and let $I$ be a nonzero ideal of $R$.
Then $\big[a_1, a_2,\ldots,a_n, [I, I]\big]=0$ iff one of $a_2,\ldots,a_{n}$ lies in $[RC, RC]$.
\end{thm}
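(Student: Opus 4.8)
The plan is to pull everything into $\M_2(F)$, where $F$ denotes the algebraic closure of $C$, and to analyse how a composition of inner derivations acts on the $3$-dimensional space $[\M_2(F),\M_2(F)]=\mathfrak{sl}_2(F)$. The key structural fact I would exploit is that in characteristic $2$ this Lie algebra is degenerate: since $\text{\rm tr}(1)=0$, the identity matrix lies in $\mathfrak{sl}_2(F)$ and spans its centre, so $\mathfrak{sl}_2(F)$ is (a copy of) the $3$-dimensional Heisenberg algebra with $1$-dimensional centre $F\cdot 1$.

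The direction ``$\Leftarrow$'' is the same kind of argument as in Theorems \ref{thm25}--\ref{thm28}. Suppose $a_j\in[RC,RC]$ with $2\le j\le n$. Because $[I,I]\subseteq[RC,RC]$ and every $\ad_{a_i}$ preserves the Lie ideal $[RC,RC]$ of $RC$, one gets $\big[a_{j+1},\ldots,a_n,[I,I]\big]\subseteq[RC,RC]$, whence $\big[a_j,\ldots,a_n,[I,I]\big]\subseteq\big[[RC,RC],[RC,RC]\big]\subseteq C$ by Lemma \ref{lem20}; applying the remaining $\ad_{a_{j-1}},\ldots,\ad_{a_1}$ (legitimate since $j-1\ge 1$) annihilates this, as $C$ is central in $RC$. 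Hence $\big[a_1,\ldots,a_n,[I,I]\big]=0$.

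For ``$\Rightarrow$'', I would first observe that $IC=RC$ gives $[IC,IC]=[I,I]C=[RC,RC]$, so by $C$-linearity the hypothesis upgrades to $\ad_{a_1}\cdots\ad_{a_n}\big([RC,RC]\big)=0$; tensoring with $F$ over $C$ and using an isomorphism $RC\otimes_CF\cong\M_2(F)$ turns this into $\ad_{a_1}\cdots\ad_{a_n}\big(\mathfrak{sl}_2(F)\big)=0$ inside $\M_2(F)$. Here $a_1$ is non-scalar, since $a_1\notin Z(R)=R\cap C$; and by Lemma \ref{lem6} (applied in $RC$) together with the trace criterion in its proof, for $2\le i\le n$ the assumption $a_i\notin[RC,RC]$ means $\text{\rm tr}(a_i)\ne 0$, which in characteristic $2$ forces $a_i$ to have distinct eigenvalues, hence to be non-scalar and diagonalizable. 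Computing in an eigenbasis of such an $a_i$ shows that $\ad_{a_i}$ restricted to $\mathfrak{sl}_2(F)$ has kernel exactly $F\cdot 1$ and image a $2$-dimensional complement of $F\cdot 1$. Assuming for contradiction that none of $a_2,\ldots,a_n$ lies in $[RC,RC]$, a short downward induction on the index then yields that $W:=\ad_{a_2}\cdots\ad_{a_n}\big(\mathfrak{sl}_2(F)\big)$ is a $2$-dimensional subspace with $W\cap F\cdot 1=0$ (each $\ad_{a_i}$ is injective on a $2$-dimensional subspace meeting $F\cdot 1$ trivially, hence carries it onto the $2$-dimensional $\text{\rm im}\,(\ad_{a_i}|_{\mathfrak{sl}_2(F)})$). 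It then remains to see that $\ad_{a_1}(W)\ne 0$. If $a_1\notin[RC,RC]$ then $\ker(\ad_{a_1}|_{\mathfrak{sl}_2(F)})=F\cdot 1$ meets $W$ trivially; if $a_1\in[RC,RC]$ then $\ker(\ad_{a_1}|_{\mathfrak{sl}_2(F)})=F[a_1]$, a $2$-dimensional space containing $F\cdot 1$, so it cannot equal $W$ (which meets $F\cdot 1$ trivially). In both cases $\ad_{a_1}(W)\ne 0$, contradicting $\ad_{a_1}\cdots\ad_{a_n}\big(\mathfrak{sl}_2(F)\big)=0$; hence some $a_j$ with $2\le j\le n$ lies in $[RC,RC]$.

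The main obstacle is exactly this characteristic-$2$ degeneration. In odd characteristic $\mathfrak{sl}_2$ is simple, which trivializes such iterated-commutator questions; here one must instead carry the flag $F\cdot 1\subseteq\mathfrak{sl}_2(F)$ through the composition, and the delicate point is the sub-case $a_1\in[RC,RC]$, where the kernel of $\ad_{a_1}$ on $\mathfrak{sl}_2(F)$ jumps to dimension $2$ and one needs the combination of $F\cdot 1\subseteq F[a_1]$ and $W\cap F\cdot 1=0$ to still conclude $\ad_{a_1}(W)\ne 0$. For the sub-case $a_1\notin[RC,RC]$ one can also argue without $\M_2(F)$: from $\ad_{a_1}\big(\big[a_2,\ldots,a_n,[I,I]\big]\big)=0$ one gets $\big[a_2,\ldots,a_n,[I,I]\big]\subseteq C_{RC}(a_1)\cap[RC,RC]=C$, the centralizer $C_{RC}(a_1)$ of $a_1$ being the $2$-dimensional algebra $C[a_1]$; hence $\big[a_2,\ldots,a_n,[I,I]\big]\subseteq Z(R)$ and Theorem \ref{thm25} applies directly.
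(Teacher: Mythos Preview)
Your proof is correct, and it takes a genuinely different route from the paper's.

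The paper's argument is a three-line duality trick: it uses the involution $*$ on $\mathfrak L(RC)$ (via Lemma~\ref{lem17}) to convert $\big[a_1,\ldots,a_n,[RC,RC]\big]=0$ into $\big[a_n,\ldots,a_1,RC\big]\subseteq C$, and then simply quotes Theorem~\ref{thm28} with the indices reversed (the innermost element is now $a_1\notin Z(R)$, so one of the outer $a_n,\ldots,a_2$ must lie in $[RC,RC]$). There is no passage to $\M_2(F)$ and no linear-algebra bookkeeping; the ``$\Leftarrow$'' direction comes for free from the same chain of equivalences.

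Your approach, by contrast, is a direct structural analysis of $\mathfrak{sl}_2(F)$ in characteristic~$2$: you track the $2$-dimensional image of each $\ad_{a_i}$ (for $i\ge 2$, using that $\text{tr}(a_i)\ne 0$ forces distinct eigenvalues) and verify it stays transverse to the centre $F\cdot 1$, then handle the two possible kernel-dimensions for $\ad_{a_1}$ separately. This is more hands-on but entirely self-contained: it does not rely on the $\phi\mapsto\phi^*$ machinery of Theorem~\ref{thm32}/Lemma~\ref{lem17}, nor on Theorem~\ref{thm28}. The paper's proof is shorter and more elegant once that machinery is in place; yours gives a more explicit picture of \emph{why} the condition on $a_2,\ldots,a_n$ (rather than $a_1$) is the right one, and your closing remark recovers the sub-case $a_1\notin[RC,RC]$ via Theorem~\ref{thm25} without any matrix computation, which is a nice observation in its own right.
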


\begin{proof}
Note that $IC=RC$. We have
\begin{eqnarray*}
\big[a_1, a_2,\ldots,a_n, [I, I]\big]=0 &\Leftrightarrow&\big[a_1, a_2,\ldots,a_n, [RC, RC]\big]=0\\
                                                                   &\Leftrightarrow&\big[a_n, a_{n-1},\ldots,a_1, RC\big]\subseteq C\ (\text{\rm by\ Lemma \ref{lem17}})\\
                                                                   &\Leftrightarrow& a_i\in [RC, RC]\ \text{\rm for some}\ i\geq 2\  (\text{\rm by\ Theorem \ref{thm28}}),
\end{eqnarray*}
as desired.
\end{proof}

Finally, we prove Theorem C in the introduction.

\begin{thm}\label{thm36}
Let $R$ be an exceptional prime ring, and let $L$ be a noncentral Lie ideal of $R$, and $a_i\in R$, $1\leq i\leq n$.

(i)\ If $L$ is either abelian or nonabelian of Type I, then $\big[a_1, a_2,\cdots,a_n, L\big]\subseteq Z(R)$ iff one of $a_1,\ldots,a_n$ lies in $[RC, RC]$.

(ii)\ If $L$ is nonabelian of Type II, then $\big[a_1, a_2,\cdots,a_n, L\big]\subseteq Z(R)$ iff either $a_n\in Z(R)$ or one of $a_1,\ldots,a_{n-1}$ lies in $[RC, RC]$.
\end{thm}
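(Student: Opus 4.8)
The plan is to reduce every case to the ideal-theoretic results already proved, Theorems \ref{thm25} and \ref{thm28}, using the description of noncentral Lie ideals of an exceptional prime ring (Lemma \ref{lem19}) as the bridge. Throughout I will use that, by Lemma \ref{lem20} applied to the central simple algebra $RC$, $\big[[RC,RC],[RC,RC]\big]\subseteq C$; hence whenever $a\in R$ lies in $[RC,RC]$ and $w\in R$ lies in $[RC,RC]$, already $[a,w]\in R\cap C=Z(R)$.

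For the ``$\Leftarrow$'' directions: if $a_n\in Z(R)$ then $[a_n,L]=0$ and we are done; if some $a_i$ with $i\le n-1$ lies in $[RC,RC]$, then the inner nested commutator $[a_{i+1},\ldots,a_n,L]$ is contained in $[R,R]\subseteq[RC,RC]$, so $\big[a_i,[a_{i+1},\ldots,a_n,L]\big]\subseteq Z(R)$ and all outer brackets vanish. The same computation covers $i=n$ in (i) once we know $L\subseteq[RC,RC]$, which holds because $LC=Ca+C=[a,RC]\subseteq[RC,RC]$ when $L$ is abelian (Lemma \ref{lem19}(i)) and $LC=[RC,RC]$ when $L$ is of Type I (Definition \ref{defi2}).

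For ``$\Rightarrow$'' in (i): if $L$ is of Type I, Lemma \ref{lem19}(ii) gives a nonzero ideal $I=R[L,L]R$ with $[I,I]\subseteq L$, so $[a_1,\ldots,a_n,[I,I]]\subseteq Z(R)$ and Theorem \ref{thm25} puts one of $a_1,\ldots,a_n$ into $[RC,RC]$. If $L$ is abelian, choose $a\in L\setminus Z(R)$; since $L$ is a Lie ideal, $[a,R]\subseteq L$, hence $[a_1,\ldots,a_n,a,R]=[a_1,\ldots,a_n,[a,R]]\subseteq Z(R)$ is a commutator of length $n+1\ge2$ in $a_1,\ldots,a_n,a$ with noncentral last entry $a$, and Theorem \ref{thm28} (with the nonzero ideal $R$) forces one of $a_1,\ldots,a_n$ into $[RC,RC]$.

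For ``$\Rightarrow$'' in (ii) --- the crux --- $L$ is of Type II, so $LC=RC$, and we may assume $a_n\notin Z(R)$. The key step is scalar extension: since $C$ is central, $[b,XC]=[b,X]C$ for every $b\in RC$ and every additive subgroup $X$, so an easy induction gives $[a_1,\ldots,a_n,L]C=[a_1,\ldots,a_n,LC]=[a_1,\ldots,a_n,RC]$; as the left side lies in $C$, we get $[a_1,\ldots,a_n,RC]\subseteq C$. Now $RC$ is itself an exceptional (prime, in fact simple) ring with extended centroid $C$, $RC$ is a nonzero ideal of itself, and $a_n\notin C=Z(RC)$. If $n\ge2$, Theorem \ref{thm28} applied inside $RC$ puts one of $a_1,\ldots,a_{n-1}$ into $[RC,RC]$; if $n=1$, then $[a_1,RC]\subseteq C$ forces $a_1\in Z(RC)\cap R=Z(R)$ by Lemma \ref{lem8}(ii). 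I expect the main obstacle to be precisely this part (ii): unlike the abelian and Type I cases, a Type II Lie ideal need not contain any nonzero ideal of $R$ (Example \ref{examp4}), so the ideal-theoretic theorems are not directly applicable, and it is the identity $LC=RC$ together with transporting the question to the central simple algebra $RC$ that circumvents this; a minor bookkeeping point throughout is keeping the commutator length $\ge2$ so that Theorem \ref{thm28} applies, with the length-one subcases handled by hand.
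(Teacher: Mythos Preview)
Your proof is correct and follows essentially the same approach as the paper: both reduce all cases to Theorems \ref{thm25} and \ref{thm28} via the structure of $LC$ from Lemma \ref{lem19}. The only cosmetic difference is that the paper uniformly passes to $LC$ (using $LC=[RC,RC]$, $LC=[w,RC]$, or $LC=RC$) and applies those theorems inside $RC$, whereas for part (i) you instead stay in $R$ by invoking $[I,I]\subseteq L$ (Type I) and $[a,R]\subseteq L$ (abelian); your explicit handling of the $n=1$ boundary case is also a little more careful than the paper's.
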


\begin{proof}

(i)\ Assume that $L$ is nonabelian of Type I. Thus $LC=[RC, RC]$ and so
\begin{eqnarray*}
\big[a_1, a_2,\cdots,a_n, L\big]\subseteq Z(R)&\Leftrightarrow& \big[a_1, a_2,\cdots,a_n, [RC, RC]\big]\subseteq C\\
                                                                                     &\Leftrightarrow& \text{\rm one of}\ a_1,\ldots,a_n\in [RC, RC]\ \ (\text{\rm by Theorem \ref{thm25}}).
\end{eqnarray*}
Assume next that $L$ is abelian It follows from Lemma \ref{lem19} (i)
that $LC=[w, RC]$ for some $w\in L\setminus Z(R)$. Thus
\begin{eqnarray*}
\big[a_1, a_2,\cdots,a_n, L\big]\subseteq Z(R)&\Leftrightarrow&  \big[a_1, a_2,\cdots,a_n, [w, RC]\big]\subseteq C\\
                                                                                     &\Leftrightarrow&  a_i\in [RC, RC]\ \text{\rm for some}\ i\
                                                                                     (\text{\rm by Theorem \ref{thm28}}).
\end{eqnarray*}

(ii)\ Assume that $L$ is nonabelian of Type II. Thus $LC=RC$ and so
\begin{eqnarray*}
\big[a_1, a_2,\cdots,a_n, L\big]\subseteq Z(R)&\Leftrightarrow& \big[a_1, a_2,\cdots,a_n, RC\big]\subseteq C\\
                                                                                     &\Leftrightarrow& \text{\rm either}\ a_n\in Z(R)\ \text{\rm or}\  \exists i, 1\leq i\leq n-1, a_i\in [RC, RC]\\\
                                                                                     && (\text{\rm by Theorem \ref{thm28}}).
\end{eqnarray*}
\end{proof}

For the nilpotency of derivations (in particular, inner derivations) of arbitrary prime rings, we refer the reader to \cite{martindale1983, chung1983, chung1984, chung1984-1, lee1986, chuang2005}.

\section{\bf Theorem D}
Let $x_1,\ldots,x_m, y\in R$, where $m>1$. Clearly, we have
$$
E_m(x_1,\ldots,x_m)=(-1)^{m-1}[x_m, x_{m-1},\ldots,x_1]
$$
and
$$
\big[E_m(x_1,\ldots,x_m), y\big]=\sum_{i=1}^mE_m(x_1,\ldots,[x_i, y],\ldots,x_m).
$$
Thus if $L$ is a Lie ideal of $R$ then so is $E_m(L)^+$. Moreover, $E_m(L)^+=\big[L, E_{m-1}(L)^+\big]$ for $m>1$.

\begin{lem}\label{lem22}
Let $R$ be an exceptional prime ring.

(i)\ $E_m(RC)^+=[RC, RC]$ for $m\geq 2$;

(ii)\ $E_2([RC, RC])^+=C$ and $E_m([RC, RC])^+=0$ for $m\geq 3$.
\end{lem}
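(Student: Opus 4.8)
The plan is to work inside $RC$, which is a central simple algebra of dimension $4$ over $C$ with $\operatorname{char}C=2$, and to reduce everything to concrete computations after passing to the algebraic closure $F$ of $C$, where by Lemma~\ref{lem6}'s proof $RC\otimes_CF\cong\text{\rm M}_2(F)$. So it suffices to prove the two statements for $R=\text{\rm M}_2(F)$ with $F$ a field of characteristic $2$ (noting $C$ plays the role of $Z(R)=F$), and then observe that each claimed identity is preserved under scalar extension since $E_m$ is multilinear and $[RC,RC]$, $C$ become $[\text{\rm M}_2(F),\text{\rm M}_2(F)]$, $F$. Here $[\text{\rm M}_2(F),\text{\rm M}_2(F)]=\mathfrak{sl}_2(F)$, the trace-zero matrices, and $E_2(x_1,x_2)=[x_1,x_2]$.

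For part (i), the containment $E_m(RC)^+\subseteq[RC,RC]$ is immediate since $E_2(x_1,x_2)=[x_1,x_2]\in[RC,RC]$ and each further bracketing stays inside $[RC,RC]$; so the content is the reverse inclusion $[RC,RC]\subseteq E_m(RC)^+$. For $m=2$ this is trivial. For $m\ge 3$, I would first handle $m=3$: one must show every trace-zero matrix is an $F$-linear combination of elements $[[a,b],c]$. Using $e_{12}=[[e_{11},e_{12}],e_{11}]$ (since $[e_{11},e_{12}]=e_{12}$ and $[e_{12},e_{11}]=e_{12}$, char $2$), and symmetrically $e_{21}=[[e_{22},e_{21}],e_{22}]$, and $h:=e_{11}-e_{22}=[e_{12},e_{21}]=[[h,e_{12}],e_{21}]/\text{(unit)}$ — more carefully, $[[e_{12},e_{21}],\cdot]$ or a direct bracket like $[[e_{11},e_{12}],e_{21}]=[e_{12},e_{21}]=h$ — one gets all three basis vectors $e_{12},e_{21},h$ of $\mathfrak{sl}_2$ into $E_3(RC)^+$. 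For general $m\ge 3$ I would induct: since $E_m(RC)^+=[RC,E_{m-1}(RC)^+]=[RC,[RC,RC]]$ by the inductive hypothesis, and $[\text{\rm M}_2(F),\mathfrak{sl}_2(F)]=\mathfrak{sl}_2(F)$ (clear from the same three brackets), we are done.

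For part (ii), set $S:=[RC,RC]=\mathfrak{sl}_2(F)$. First $E_2(S)^+=[S,S]$. By Lemma~\ref{lem20}, $[S,S]\subseteq Z(R)=C$ since $R$ is exceptional; and $[S,S]\ne 0$ there, e.g. $[e_{12},e_{21}]=e_{11}-e_{22}$ — wait, that is not central, so instead one uses that in $\mathfrak{sl}_2$ over a field of characteristic $2$, $[e_{12},e_{21}]=e_{11}+e_{22}=1$ up to the convention, or more safely cite Lemma~\ref{lem20} directly: $0\ne[[R,R],[R,R]]\subseteq Z(R)$, which after extending scalars gives $0\ne[S,S]\subseteq F$, and a one-dimensional subspace of $F$ over $F$ is all of $F$, so $E_2(S)^+=C$. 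Then for $m\ge 3$, $E_m(S)^+=[S,E_{m-1}(S)^+]\subseteq[S,C]=0$ by induction starting from $E_3(S)^+=[S,E_2(S)^+]=[S,C]=0$. The main obstacle is purely bookkeeping in part (i): getting the explicit bracket expressions for $e_{12},e_{21},h$ as triple commutators right in characteristic $2$ (where many signs and the element $e_{11}+e_{22}$ behave differently than one's characteristic-$0$ intuition), and making sure the scalar-extension argument is stated cleanly so that the computation in $\text{\rm M}_2(F)$ legitimately transfers back to $RC$. Everything else is routine induction.
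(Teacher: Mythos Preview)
Your argument is correct. Part (ii) is handled exactly as the paper does it: Lemma~\ref{lem20} gives $0\ne\big[[RC,RC],[RC,RC]\big]\subseteq C$, hence $E_2([RC,RC])^+=C$ since it is a nonzero $C$-subspace of $C$, and then $E_m([RC,RC])^+=0$ for $m\ge 3$ by induction. (Your momentary worry that $e_{11}-e_{22}$ is not central is misplaced: in characteristic $2$ it equals $e_{11}+e_{22}=1$, which \emph{is} central.)

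For part (i), however, the paper's route is shorter and avoids both scalar extension and matrix calculations. The paper observes that for $m>2$,
\[
E_m(RC)^+=\big[RC,\,E_{m-1}(RC)^+\big]=\big[RC,\,[RC,RC]\big]=\big[RC,\,\overline{[RC,RC]}\big]=[RC,RC],
\]
using Lemma~\ref{lem8}\,(iii) (that $[I,A]=[I,\overline A]$ for an ideal $I$) together with Herstein's Theorem~\ref{thm24}, which gives $\overline{[RC,RC]}=RC$ because $RC$ is a noncommutative simple ring. Your approach instead passes to $\text{\rm M}_2(F)$ and checks $[\text{\rm M}_2(F),\mathfrak{sl}_2(F)]=\mathfrak{sl}_2(F)$ by explicit brackets, then descends via a dimension count (legitimate, since $E_m(RC)^+$ is a $C$-subspace by multilinearity). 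This works, but it buys you nothing over the paper's argument: Herstein's theorem plus Lemma~\ref{lem8}\,(iii) handles the inductive step in one line, applies to any noncommutative simple ring, and sidesteps the bookkeeping you yourself flag as the main obstacle.
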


\begin{proof}
(i)\ Clearly, $E_2(RC)^+=[RC, RC]$. By induction on $m$, for $m>2$, we have
\begin{eqnarray*}
E_m(RC)^+&=&\big[RC, E_{m-1}(RC)^+\big]\\
                     &=&\big[RC, [RC, RC]\big]\\
                     &=&\big[RC, \overline{[RC, RC]}\big]\\
                     &=&[RC, RC]\ (\text{\rm by Theorem \ref{thm24}}),
\end{eqnarray*}
as desired.

(ii)\ It follows from Lemma \ref{lem20} that $E_2([RC, RC])^+=C$. Let $m\geq 3$. By induction on $m$, $E_{m-1}([RC, RC])^+=0$ if $m>3$ and $E_{m-1}([RC, RC])^+=C$ if $m=3$. Thus
$
E_m(RC)^+=\big[RC, E_{m-1}(RC)^+\big]=0.
$
\end{proof}

The following is a consequence of \cite[Theorem 4.5]{lee2025}.

\begin{lem}\label{lem23}
Let $R$ be an exceptional prime ring. Then a noncentral Lie ideal $L$ of $R$ is contained in $[RC, RC]$ iff
$L$ is either abelian or nonabelian of Type I.
\end{lem}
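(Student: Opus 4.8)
The plan is to reduce everything to the trichotomy recorded in Lemma \ref{lem19}: a noncentral Lie ideal $L$ of the exceptional prime ring $R$ is (exactly) one of the following three mutually exclusive types --- abelian, in which case $LC=[a,RC]=Ca+C$ for any $a\in L\setminus Z(R)$; nonabelian of Type I, in which case $LC=[RC,RC]$; or nonabelian of Type II, in which case $LC=RC$. Since these cases are exhaustive, the assertion of Lemma \ref{lem23} is equivalent to the statement that $L\subseteq[RC,RC]$ holds precisely when $L$ is not of Type II. So the proof splits into the two obvious implications.

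For the direction ``$\Leftarrow$'': if $L$ is nonabelian of Type I then $L\subseteq LC=[RC,RC]$ by Definition \ref{defi2}, so there is nothing to do. If $L$ is abelian, I would pick $a\in L\setminus Z(R)$ and invoke Lemma \ref{lem19}(i) to get $LC=[a,RC]$; since $a\in R\subseteq RC$ we trivially have $[a,RC]\subseteq[RC,RC]$, and hence $L\subseteq LC\subseteq[RC,RC]$.

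For the direction ``$\Rightarrow$'': assume $L$ is noncentral with $L\subseteq[RC,RC]$. If $L$ is abelian we are already in the allowed list, so suppose $L$ is nonabelian. By Lemma \ref{lem19}(ii), $LC$ is either $[RC,RC]$ or $RC$; I must rule out the second possibility. Since $[RC,RC]$ is a $C$-subspace of $RC$, the inclusion $L\subseteq[RC,RC]$ gives $LC\subseteq[RC,RC]$, so $LC=RC$ would force $[RC,RC]=RC$, and then $\bigl[[RC,RC],[RC,RC]\bigr]=[RC,RC]=RC$, contradicting Lemma \ref{lem20}, which gives $\bigl[[RC,RC],[RC,RC]\bigr]\subseteq C\subsetneq RC$. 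Hence $LC=[RC,RC]$, i.e.\ $L$ is nonabelian of Type I.

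The only point needing a moment's care is this last contradiction, namely that $[RC,RC]\ne RC$; this is immediate from Lemma \ref{lem20}, or alternatively from $\dim_C[RC,RC]=3$, which one reads off after extending scalars to the algebraic closure of $C$ exactly as in the proof of Lemma \ref{lem6} (the commutator space of $\text{\rm M}_2$ being the trace-zero matrices). I therefore expect no genuine obstacle here; equivalently, the whole statement can be quoted directly from \cite[Theorem 4.5]{lee2025} together with the observation that, over an exceptional $R$, the Type II Lie ideals are precisely the noncentral ones not contained in $[RC,RC]$.
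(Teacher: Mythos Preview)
Your argument is correct. The paper's own proof is a single sentence deferring to \cite[Theorem 4.5]{lee2025}, whereas you supply a self-contained proof using only results already established in the present paper (Lemma \ref{lem19} for the trichotomy and Lemma \ref{lem20} to rule out $[RC,RC]=RC$). Both routes are valid; yours has the advantage of not relying on the external reference, while the paper's citation is of course shorter. You even note the citation alternative at the end, so there is no real divergence in spirit---just in level of detail.
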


\begin{lem}\label{lem21}
Let $R$ be a prime ring, and let $L$ be a noncentral Lie ideal of $R$. Then $E_m(L)^+\subseteq Z(R)$ iff $R$ is exceptional, $m>1$, and $L\subseteq [RC, RC]$.
\end{lem}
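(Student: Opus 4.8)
The plan is to treat the two implications separately. The reverse implication is a one-step induction on $m$; the forward implication is obtained by first extracting the exceptionality of $R$ from Lemma \ref{lem8}(v) and then transferring the problem to $RC$ to locate $L$ inside $[RC,RC]$.

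\emph{Reverse implication.} Assume $R$ is exceptional, $m>1$, and $L\subseteq[RC,RC]$. I would begin with $m=2$: then $E_2(L)^+=[L,L]\subseteq\big[[RC,RC],[RC,RC]\big]\subseteq C$, the last inclusion by Lemma \ref{lem20} applied to the exceptional ring $RC$, whose center is $C$; since also $[L,L]\subseteq R$ and $R\cap C=Z(R)$, this gives $E_2(L)^+\subseteq Z(R)$. For $m\geq3$, the identity $E_m(L)^+=[L,E_{m-1}(L)^+]$ recorded before the lemma together with the inductive hypothesis $E_{m-1}(L)^+\subseteq Z(R)$ (legitimate because $m-1\geq2$) yields $E_m(L)^+=[L,Z(R)]=0\subseteq Z(R)$.

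\emph{Forward implication.} Assume $E_m(L)^+\subseteq Z(R)$ with $L$ noncentral. Since $E_1(L)^+=L\not\subseteq Z(R)$, necessarily $m\geq2$. Put $K_j:=E_j(L)^+$ for $j\geq1$; each $K_j$ is a Lie ideal of $R$, $K_1=L$, and $K_{j+1}=[L,K_j]$. Let $j_0$ be minimal with $K_{j_0}\subseteq Z(R)$ — it exists since $K_m\subseteq Z(R)$ — so $j_0\geq2$ and $K_{j_0-1}$ is a noncentral Lie ideal with $[L,K_{j_0-1}]=K_{j_0}\subseteq Z(R)$. As neither $L$ nor $K_{j_0-1}$ is central, Lemma \ref{lem8}(v) forces $R$ to be exceptional. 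It remains to show $L\subseteq[RC,RC]$. Suppose not; then by Lemma \ref{lem23} the Lie ideal $L$ is nonabelian of Type II, i.e.\ $LC=RC$. Since $E_m$ is $C$-multilinear, $E_m(LC)^+=C\,E_m(L)^+$, which is contained in $C$ because $E_m(L)^+\subseteq Z(R)\subseteq C$; but $E_m(LC)^+=E_m(RC)^+=[RC,RC]$ by Lemma \ref{lem22}(i) (using $m\geq2$), and $\dim_C[RC,RC]=3>1$ because $\dim_C RC=4$ and $\dim_C RC/[RC,RC]=1$. This contradiction gives $L\subseteq[RC,RC]$.

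\emph{Main obstacle.} The only step needing care is the transfer to $RC$ at the end of the forward implication: one should verify that $L$ being a Lie ideal of $R$ makes $LC$ a Lie ideal of $RC$, that the $C$-multilinearity of $E_m$ really gives $E_m(LC)^+=C\,E_m(L)^+$, and that a subset of $R$ central in $R$ lies in $C$ (by $R\cap C=Z(R)$). Beyond this, everything is a direct application of Lemmas \ref{lem8}(v), \ref{lem20}, \ref{lem22}(i), and \ref{lem23}.
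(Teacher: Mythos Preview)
Your proof is correct. The reverse implication and the extraction of exceptionality in the forward implication are essentially the paper's argument (the paper phrases the latter as ``if $R$ is nonexceptional then iterate Lemma~\ref{lem8}(v) until $L\subseteq Z(R)$'', which is your minimal-$j_0$ argument in different clothing; for the reverse implication the paper just cites Lemma~\ref{lem22}(ii), whose proof is exactly your induction).

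The one place where you genuinely diverge is the final step, showing $L\subseteq[RC,RC]$ once $R$ is known to be exceptional. The paper specialises $x_2=\cdots=x_m=x$ for an arbitrary $x\in L$, obtaining $[x,x,\ldots,x,L]\subseteq Z(R)$, and then invokes Theorem~\ref{thm36}(ii) (hence Theorem~\ref{thm28}) to force $x\in[RC,RC]$ when $L$ is of Type~II. Your route instead passes globally to $LC=RC$ via Lemma~\ref{lem23}, uses $C$-multilinearity to get $E_m(RC)^+=C\,E_m(L)^+\subseteq C$, and contradicts Lemma~\ref{lem22}(i) by a dimension count. This is a cleaner and more self-contained argument: it avoids the comparatively heavy Theorem~\ref{thm36} and stays within the elementary Lemmas~\ref{lem20}, \ref{lem22}, \ref{lem23}. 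The paper's route, on the other hand, illustrates how the Engel-type condition on $L$ feeds directly into the pointwise criterion of Theorem~C, which is thematically closer to the rest of \S5--6. The verifications you flag in your ``main obstacle'' paragraph (that $LC$ is a Lie ideal of $RC$, that $E_m(LC)^+=C\,E_m(L)^+$, and that $R\cap C=Z(R)$) are all routine.
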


\begin{proof}
``$\Rightarrow$":\ Assume that $E_m(L)^+\subseteq Z(R)$. That is, $\big[E_{m-1}(L)^+, L\big]\subseteq Z(R)$.
If $R$ is nonexceptional, then, by Lemma \ref{lem8} (v), $E_{m-1}(L)^+\subseteq Z(R)$ and finally we get $L\subseteq Z(R)$, a contradiction.
Thus $R$ is exceptional. Clearly, $m>1$. Let $x\in L$ and let $x_j:=x$ for $j\geq 2$. We have
\begin{eqnarray}
E_m(L, x_2,\ldots,x_m) &=&[x_m, x_{m-1},\ldots,x_2, L]\subseteq Z(R).
\label{eq:124}
\end{eqnarray}

Case 1:\ $L$ is either abelian or nonabelian of Type I. It follows from Lemma \ref{lem23} that $L\subseteq [RC, RC]$, as desired.

Case 2:\  $L$ is nonabelian of Type II. In view of Theorem \ref{thm36} (ii), either $x\in C$ or $x\in [RC, RC]$. Since $C\subseteq [RC, RC]$, we get $x\in [RC, RC]$.
That is, $L\subseteq [RC, RC]$, as desired.

``$\Leftarrow$":\ It follows directly from Lemma \ref{lem22} (ii).
\end{proof}

We are now ready to prove Theorem D in the introduction.

\begin{thm}\label{thm37}
Let $R$ be a prime ring, and let $L$ be a noncentral Lie ideal of $R$, and either $m>1$ or $ k>1$. Then $\big[E_m(L)^+, E_k(L)^+\big]=0$ iff $R$ is exceptional and $L\subseteq [RC, RC]$.
\end{thm}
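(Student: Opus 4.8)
The plan is to prove the two implications separately, in each direction reducing the bracket condition to a computation inside the central simple $C$-algebra $RC$ by means of Lemmas \ref{lem22} and \ref{lem19}. I record two standing remarks. Since $\big[E_m(L)^+,E_k(L)^+\big]=0$ is symmetric in $m$ and $k$, I may assume $k>1$. And since each $E_i$ is multilinear and $C$ is central, one has $E_i(L)^+C=E_i(LC)^+$ for every $i$, and moreover $\big[E_m(L)^+,E_k(L)^+\big]=0$ forces $\big[E_m(LC)^+,E_k(LC)^+\big]=0$ by $C$-bilinearity of the bracket. Recall finally, as noted just before Lemma \ref{lem22}, that every $E_i(L)^+$ is a Lie ideal of $R$.

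For the implication ``$\Leftarrow$'', assume $R$ is exceptional and $L\subseteq[RC,RC]$. Then $E_k(L)^+\subseteq E_k([RC,RC])^+$, which by Lemma \ref{lem22}(ii) is $0$ when $k\geq3$ and equals $C$ when $k=2$. In the first case $E_k(L)^+=0$, so the bracket vanishes; in the second case $E_k(L)^+\subseteq C$ lies in the center of $Q_s(R)$, so $\big[E_m(L)^+,E_k(L)^+\big]=0$ for every $m$. This settles one direction.

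For ``$\Rightarrow$'', suppose $\big[E_m(L)^+,E_k(L)^+\big]=0$. First I claim $R$ is exceptional. If it were not, then, $E_m(L)^+$ and $E_k(L)^+$ being Lie ideals with $\big[E_m(L)^+,E_k(L)^+\big]\subseteq Z(R)$, Lemma \ref{lem8}(v) would force one of them to be central; but $E_k(L)^+\subseteq Z(R)$ contradicts Lemma \ref{lem21} because $k>1$, while $E_m(L)^+\subseteq Z(R)$ contradicts Lemma \ref{lem21} if $m>1$ and contradicts the noncentrality of $L$ if $m=1$. Hence $R$ is exceptional. Now suppose, for contradiction, that $L\not\subseteq[RC,RC]$. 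By Lemma \ref{lem23}, $L$ is then neither abelian nor nonabelian of Type I, so by Lemma \ref{lem19}(ii) it is nonabelian of Type II, i.e.\ $LC=RC$. Since $[RC,RC]\subseteq E_i(RC)^+$ for every $i\geq1$ (trivially for $i=1$, and by Lemma \ref{lem22}(i) for $i\geq2$) and $k>1$, it follows that $\big[E_m(LC)^+,E_k(LC)^+\big]=\big[E_m(RC)^+,E_k(RC)^+\big]\supseteq\big[[RC,RC],[RC,RC]\big]=E_2([RC,RC])^+=C\neq0$ by Lemma \ref{lem22}(ii); this contradicts $\big[E_m(LC)^+,E_k(LC)^+\big]=0$. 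Therefore $L\subseteq[RC,RC]$.

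I do not expect a genuine obstacle, since every substantial input is already packaged in a cited lemma. The points that require care are purely bookkeeping: the scalar-extension identities $E_i(L)^+C=E_i(LC)^+$ and $\big[E_m(LC)^+,E_k(LC)^+\big]=0$ (from multilinearity of the $E_i$ and $C$-bilinearity of the bracket), the verification that $LC$ is again a noncentral Lie ideal of $R$ so that Lemmas \ref{lem19} and \ref{lem23} apply to it, and the handling of the degenerate indices ($m=1$, and $k=2$ versus $k\geq3$), which is why I normalize to $k>1$ at the outset and split the ``$\Leftarrow$'' argument on the value of $k$.
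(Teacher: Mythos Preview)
Your proof is correct. The ``$\Leftarrow$'' direction is essentially the paper's argument (both rest on Lemma~\ref{lem22}(ii)), though you phrase it via the inclusion $E_k(L)^+\subseteq E_k([RC,RC])^+$ rather than splitting into the abelian and Type~I subcases.

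In the ``$\Rightarrow$'' direction your route differs from the paper's and is arguably cleaner. The paper establishes exceptionality by invoking two external results (\cite[Theorem~6.3]{lee2025} for abelian $L$ and \cite[Theorem~2]{lanski2010} for nonabelian $L$), then applies Theorem~\ref{thm14} to $E_m(LC)^+$ and runs a three-case analysis. You instead obtain exceptionality from the internal Lemma~\ref{lem8}(v) combined with Lemma~\ref{lem21}, and then bypass the case split entirely: Lemma~\ref{lem23} tells you that $L\not\subseteq[RC,RC]$ forces $LC=RC$, and Lemma~\ref{lem22}(i) immediately gives the contradiction $\big[E_m(RC)^+,E_k(RC)^+\big]\supseteq\big[[RC,RC],[RC,RC]\big]=C\neq0$. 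This is more self-contained (no external citations) and shorter; the only cost is that Lemma~\ref{lem21} itself leans on Theorem~\ref{thm36}, so the dependency chain is slightly longer internally, but there is no circularity.
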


\begin{proof}
``$\Rightarrow$":\ Assume that $\big[E_m(L)^+, E_k(L)^+\big]=0$, that is, Eq.\eqref{eq:123} holds
for all $x_i, y_j\in L$.
We claim that $R$ is exceptional. Indeed , if $L$ is abelian, it follows from \cite[Theorem 6.3]{lee2025} that $R$ is exceptional.
Suppose next that $L$ is nonabelian. Then Eq.\eqref{eq:123} implies that $R$ is exceptional (see \cite[Theorem 2]{lanski2010}).

Note that $E_m(LC)^+$ is a Lie ideal of $RC$.
By Eq.\eqref{eq:123}, $\big[E_m(LC)^+, E_k(LC)^+\big]=0$.
In view of Theorem \ref{thm14}, $E_m(LC)^+\subseteq C$, $E_m(LC)^+=Cw+C$ for some $w\in E_m(L)^+\setminus Z(R)$, or $[RC, RC]\subseteq E_m(LC)^+$.

Case 1:\ $E_m(LC)^+\subseteq C$. By Lemma \ref{lem21}, $LC\subseteq [RC, RC]$.

Case 2:\ $E_m(LC)^+=Cw+C$ for some $w\in E_m(L)^+\setminus Z(R)$. In particular, $Cw+C\subseteq LC$. In view of Theorem \ref{thm14}, $LC$ is equal to $Cw+C$, $[RC, RC]$, or $RC$. If $LC=RC$,  it follows from Lemma \ref{lem22} (i) that $E_m(LC)^+=[RC, RC]$, a contradiction. Hence $L$ is either abelian or nonabelian of Type I. Hence $LC\subseteq [RC, RC]$.

Case 3:\ $[RC, RC]\subseteq E_m(LC)^+$. Then $[RC, RC]\subseteq LC$, implying that either $LC=[RC, RC]$ or $RC$. Suppose on the contrary that
$LC=RC$. In view of Lemma \ref{lem22}, $E_s(RC)^+=[RC, RC]$ for $s\geq 2$. Clearly, $E_1(RC)^+=RC$.
Since $m>1$ or $ k>1$, we get
$$
C=\big[[RC, RC], [RC, RC]\big]\subseteq \big[E_m(L)^+, E_k(L)^+\big]=0,
$$
a contradiction.

``$\Leftarrow$":\ Case 1: $L$ is abelian. Then, by Lemma \ref{lem22} (iii), $E_s(L)^+=0$ for $s\geq 2$. Thus Eq.\eqref{eq:123} holds
for all $x_i, y_j\in L$.

Case 2:\ $L$ is nonabelian of Type I. Then $LC=[RC, RC]$. By Lemma \ref{lem22} (ii), $E_2([RC, RC])^+=C$ and $E_s([RC, RC])^+=0$ for $s\geq 3$.
Since $m>1$ or $k>1$, we get  $\big[E_m(LC)^+, E_k(LC)^+\big]=0$, as desired.
\end{proof}

\section{Problem \ref{problem1}: The abelian case}
In this section, {\it $R$ is always an exceptional prime ring with extended centroid $C$.}

\begin{lem}\label{lem14}
A derivation $d$ of $R$ is of the form
$\text{\rm ad}_g$ for some $g\in [RC, RC]$ iff $d([R, R])\subseteq Z(R)$.
\end{lem}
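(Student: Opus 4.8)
The plan is to prove both implications by passing to the central closure $RC$, which is a $4$-dimensional central simple algebra over $C$, and then invoking the structure of $RC\otimes_C F$ over the algebraic closure $F$ of $C$ (as in the proof of Lemma~\ref{lem6}), together with the Skolem--Noether Theorem. First I would handle the easy direction ``$\Rightarrow$'': if $d=\text{\rm ad}_g$ with $g\in[RC,RC]$, then for $x,y\in R$ we have $d([x,y])=\big[g,[x,y]\big]\in\big[[RC,RC],[RC,RC]\big]\subseteq C$ by Lemma~\ref{lem20}, since $g\in[RC,RC]$ and $[x,y]\in[RC,RC]$; but $d([x,y])\in R$ as well, and $R\cap C=Z(R)$, so $d([R,R])\subseteq Z(R)$.

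For the converse, suppose $d([R,R])\subseteq Z(R)$. Since $R$ and $RC$ satisfy the same differential identities (see \cite[Theorem 2]{lee1992}, as already cited in Remark~\ref{remark1}), and $[R,R]$ generates $[RC,RC]$ as a $C$-space, one gets $d([RC,RC])\subseteq C$, where $d$ is the canonical extension of $d$ to $RC$ with $d(C)\subseteq C$. Next I would separate $d$ according to whether it is X-inner or X-outer. If $d$ were X-outer, I expect a contradiction: the relation $d([RC,RC])\subseteq C$ is a nontrivial differential identity, and by Kharchenko's theorem (Proposition~\ref{pro3}(i), or rather the reduction of differential identities with an X-outer derivation to generalized polynomial identities in which $d(x)$ is treated as an independent variable) it would force the associated generalized identity $[RC,RC]$-worth of relations, namely that $[u,[x,y]]\in C$ identically in the ``free'' variable $u$ replacing $d$ of something — more carefully, writing $d([x,y])=[d(x),y]+[x,d(y)]$ and using X-outerness to free $d(x),d(y)$, one obtains $[[u,y]+[x,v],\ast]=0$-type identities that $RC=\text{\rm M}_2(F)$ after scalar extension does not satisfy. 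So $d$ is X-inner, i.e. (since $RC$ is central simple of dimension $4$, equivalently by Skolem--Noether applied to the derivation $d|_{RC}$, noting $d$ need not vanish on $C$ a priori — here I would first argue $d(C)=0$: if $c\in C$ and $r\in R$ with $[r,s]\notin Z(R)$ for suitable $s$, then... actually $d(C)\subseteq C$ automatically, and one checks $d$ restricted to $RC$ is $C$-linear modulo inner, then Skolem--Noether gives it is inner) there is $g\in RC$ with $d=\text{\rm ad}_g$ on $RC$.

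It remains to show $g$ can be chosen in $[RC,RC]$. From $d=\text{\rm ad}_g$ and $d([RC,RC])\subseteq C$ we get $\big[g,[RC,RC]\big]\subseteq C$. Passing to $F$ via the isomorphism $\phi\colon RC\otimes_CF\to\text{\rm M}_2(F)$, we get $\big[\phi(g),[\text{\rm M}_2(F),\text{\rm M}_2(F)]\big]\subseteq F$, and since $[\text{\rm M}_2(F),\text{\rm M}_2(F)]$ is the trace-zero matrices which generate $\text{\rm M}_2(F)$ as an algebra (Theorem~\ref{thm24}), this says $\big[\phi(g),\text{\rm M}_2(F)\big]\subseteq F$, hence $\big[\phi(g),\text{\rm M}_2(F)\big]\subseteq F\cap[\text{\rm M}_2(F),\text{\rm M}_2(F)]=0$ in characteristic $2$; wait, more simply $[\phi(g),x]\in F$ for all $x$ forces $\phi(g)\in F$ by Lemma~\ref{lem8}(ii), so $g\in C$ and $d=0$, which is fine (and $0=\text{\rm ad}_0$ with $0\in[RC,RC]$). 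Hmm — so in fact $d([R,R])\subseteq Z(R)$ with $d$ inner already forces $d$ to have a central-closure-inner implementer, but the implementer need not a priori be a commutator: any $g$ differs from a trace-zero element by a scalar. Write $\text{\rm tr}(\phi(g))=c\in F$; choose $\beta\in F$ with $\beta\cdot I$ having trace $c$ (possible since $\text{\rm char}=2$ means $\text{\rm tr}(\beta I)=2\beta=0$ — so this fails!). So instead I use: in $\text{\rm M}_2(F)$, $\text{\rm char}=2$, the trace-zero elements are exactly the $x$ with $x^2\in F$ (Lemma~\ref{lem6}'s proof), and \emph{every} scalar matrix has trace zero; hence $[\text{\rm M}_2(F),\text{\rm M}_2(F)]$ consists of all trace-zero matrices, which includes $F\cdot I$, so $F\cdot I\subseteq[\text{\rm M}_2(F),\text{\rm M}_2(F)]$, and therefore the decomposition ``$g = (\text{trace-zero part}) + (\text{scalar})$'' already lies entirely in $[RC,RC]$ once we know $\text{\rm tr}(\phi(g))=0$. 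So the real content is: $d=\text{\rm ad}_g$ and $\big[g,[RC,RC]\big]\subseteq C$ imply $\text{\rm tr}(\phi(g))=0$, equivalently $g^2\in Z(R)$, equivalently $g\in[RC,RC]$ by Lemma~\ref{lem6}. And indeed if $\text{\rm tr}(\phi(g))\neq0$ then $\phi(g)\notin[\text{\rm M}_2(F),\text{\rm M}_2(F)]$, but $\big[\phi(g),[\text{\rm M}_2(F),\text{\rm M}_2(F)]\big]\subseteq F$ together with Lemma~\ref{lem8}(ii) (since trace-zero matrices generate $\text{\rm M}_2(F)$) forces $\phi(g)\in F$, contradicting $\text{\rm tr}(\phi(g))\neq0$ in characteristic $2$ (as $\text{\rm tr}$ of a scalar is $0$). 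Hence $g\in[RC,RC]$, completing the proof. The main obstacle is the X-outer case — ruling it out cleanly via Kharchenko's reduction and a direct check that $\text{\rm M}_2(F)$ admits no such nontrivial differential identity — and the careful bookkeeping with traces and scalars in characteristic $2$; everything else is routine once one is in $\text{\rm M}_2(F)$.
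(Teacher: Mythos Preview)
Your overall strategy is workable but has two real gaps, and it differs substantially from the paper's much shorter argument.

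For ``$\Leftarrow$'', the paper avoids Kharchenko entirely. From $d([R,R])\subseteq Z(R)$ it proves $d(Z(R))=0$ directly: for $\beta\in Z(R)$ and $x\in[R,R]$ one has $\beta x\in[R,R]$, so $d(\beta x)=\beta d(x)+d(\beta)x\in Z(R)$; since $\beta d(x)\in Z(R)$ this gives $d(\beta)[R,R]\subseteq Z(R)$, and as $d(\beta)\in Z(R)$ with $R$ noncommutative, $d(\beta)=0$. The Skolem--Noether Theorem then yields $d=\text{\rm ad}_g$ for some $g\in RC$. This two-line computation replaces your entire ``rule out X-outer via Kharchenko'' step, which you leave at the level of ``I expect a contradiction''. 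Your approach can be made to work --- fixing $w,x$ and applying Proposition~\ref{pro3}(i) to $\text{\rm ad}_w\text{\rm ad}_{d(x)}+\text{\rm ad}_w\text{\rm ad}_x\,d=0$ gives $\text{\rm ad}_w\text{\rm ad}_x=0$, hence $[R,[R,R]]=0$, a contradiction --- but you never actually carry this out.

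For the final step $g\in[RC,RC]$, the paper simply invokes Theorem~\ref{thm25} (case $n=1$). Your argument via scalar extension has a genuine flaw: you claim that because $[\text{\rm M}_2(F),\text{\rm M}_2(F)]$ generates $\text{\rm M}_2(F)$ as an algebra, the inclusion $\big[\phi(g),[\text{\rm M}_2(F),\text{\rm M}_2(F)]\big]\subseteq F$ yields $\big[\phi(g),\text{\rm M}_2(F)\big]\subseteq F$. This inference is invalid, since $[\phi(g),ab]=[\phi(g),a]b+a[\phi(g),b]$ need not lie in $F$ when $[\phi(g),a],[\phi(g),b]\in F$. The argument that actually works (and is precisely the $n=1$ case of Theorem~\ref{thm25}) uses codimension, not generation: if $\phi(g)\notin[\text{\rm M}_2(F),\text{\rm M}_2(F)]$ then $\text{\rm M}_2(F)=F\phi(g)+[\text{\rm M}_2(F),\text{\rm M}_2(F)]$, whence $\big[\phi(g),\text{\rm M}_2(F)\big]\subseteq F$ and $\phi(g)\in F$ by Lemma~\ref{lem8}(ii), contradicting $\text{\rm tr}(\phi(g))\neq 0$.
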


\begin{proof}
By Lemma \ref{lem20}, the part ``$\Rightarrow$" is clear. For the part ``$\Leftarrow$", assume that $d([R, R])\subseteq Z(R)$.
Let $\beta\in Z(R)$ and $x\in [R, R]$.
Then $\beta x\in [R, R]$ and so
$
d(\beta x)=\beta d(x)+d(\beta)x\in Z(R),
$
implying that $d(\beta)x\in C$. That is, $d(\beta)[R, R]\subseteq Z(R)$. Since $R$ is not commutative, we get $d(\beta)=0$.
That is, $d(Z(R))=0$. By the Skolem-Noether theorem, $d$ is X-inner, i.e., $d=\text{\rm ad}_g$ for some $g\in RC$. Then
$\big[g, [RC, RC]\big]\subseteq C$. In view of Theorem \ref{thm25}, $g\in [RC, RC]$.
\end{proof}

\begin{lem}\label{lem15}
Let $\delta, d$ be X-outer derivations of $R$. Then there exists $\beta\in Z(R)$ such that neither $\delta(\beta)=0$ nor $d(\beta)=0$.
\end{lem}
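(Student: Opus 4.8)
The plan is to reduce everything to the following claim about the centre alone: if $e$ is an X-outer derivation of $R$, then $e(Z(R))\ne 0$. Granting this for both $\delta$ and $d$, the sets $A:=\{z\in Z(R):\delta(z)=0\}$ and $B:=\{z\in Z(R):d(z)=0\}$ are proper additive subgroups of $(Z(R),+)$. Since a group is never the union of two of its proper subgroups, there is some $\beta\in Z(R)\setminus(A\cup B)$, and this $\beta$ has the required property. So the real content is the claim.

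To prove the claim, suppose for contradiction that $e$ is X-outer but $e(Z(R))=0$; first I would upgrade this to $e(C)=0$. Recall $e$ extends uniquely to a derivation of $Q_s(R)$ with $e(C)\subseteq C$. Given $q\in C$, choose a nonzero ideal $J$ of $R$ with $qJ\subseteq R$. Because $R$ is exceptional it is PI, and $J$ then contains a nonzero central element of $R$: for $a,b\in J$ one has $[a,b]\in[RC,RC]$, so $[a,b]^2\in Z(R)$ by Lemma \ref{lem6}, while $[a,b]^2\in J$; and $[a,b]^2\ne 0$ for a suitable choice, since otherwise $[x,y]^2=0$ would be an identity of $J$ and hence of $RC$ (\cite[Theorem 2]{chuang1988}), impossible as $\dim_CRC=4$. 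Call such a central element $z$. Then $qz\in R$, and $qz$ is central in $R$ because $q$ is central in $Q_s(R)$ and $z\in Z(R)$; thus $qz\in Z(R)$. Applying $e$ and using $e(Z(R))=0$ gives $0=e(qz)=qe(z)+e(q)z=e(q)z$, whence $e(q)=0$ since $C$ is a field and $z\ne 0$. Therefore $e(C)=0$.

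Now $RC$ is a finite-dimensional central simple algebra over $C$, and the extended derivation $e$ of $RC$ vanishes on its centre $C$; by the Skolem--Noether theorem $e=\ad_g$ on $RC$ for some $g\in RC\subseteq Q_s(R)$, so $e$ is X-inner on $R$ — contradicting the hypothesis. This proves the claim, and with it the lemma.

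I expect the only genuinely non-formal point to be the passage $e(Z(R))=0\Rightarrow e(C)=0$, which is in essence the fact that for an exceptional (PI) prime ring $C$ is the field of fractions of $Z(R)$, i.e. that every nonzero ideal contains a nonzero central element; the remaining ingredients — that $M_2$ over the algebraic closure does not satisfy $[x,y]^2=0$, the Skolem--Noether theorem, and the elementary fact about unions of two proper subgroups — are short and standard.
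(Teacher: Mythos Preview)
Your argument is correct and follows exactly the paper's route: both proofs use the fact that a group is not the union of two proper subgroups to reduce to the claim that an X-outer derivation cannot vanish on $Z(R)$, and both finish via Skolem--Noether on $RC$. The paper compresses the implication $e(Z(R))=0\Rightarrow e$ is X-inner into a single appeal to Skolem--Noether, whereas you spell out the intermediate step $e(Z(R))=0\Rightarrow e(C)=0$ explicitly; this extra care is justified and your argument for it (via a nonzero central element in a nonzero ideal of a PI prime ring) is sound.
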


\begin{proof}
Suppose not, that is, given any $\beta\in Z(R)$, either $\delta(\beta)=0$ or $d(\beta)=0$. Then
$Z(R)$ is the union of its two additive subgroups:
$$
\{\beta\in Z(R)\mid \delta(\beta)=0\}\ \text{\rm and}\ \ \{\beta\in Z(R)\mid d(\beta)=0\}.
$$
Since $Z(R)$ cannot be the union of its two proper subgroups,
this implies that either $\delta(Z(R))=0$ or $d(Z(R))=0$. By the Skolem-Noether theorem, one of $\delta$ and $d$ is X-inner, a contradiction.
\end{proof}

The following theorem solves Problem \ref{problem1} when $L$ is noncentral abelian.

\begin{thm}\label{thm34}
Let $\delta, d$ be nonzero derivations of $R$, and let $L$ be a noncentral abelian Lie ideal of $R$.

(i)\ If $d$ is X-inner and $\delta$ is X-outer, then $\delta d(L)\subseteq Z(R)$ iff $d([R, R])\subseteq Z(R)$;

(ii)\ If both $d$ and $\delta$ are X-inner, then $\delta d(L)\subseteq Z(R)$ iff either $d([R, R])\subseteq Z(R)$ or $\delta([R, R])\subseteq Z(R)$;

(iii)\ If $d$ is X-outer and $\delta$ is X-inner, then $\delta d(L)\subseteq Z(R)$ iff $\delta([R, R])\subseteq Z(R)$;

(iv)\ If both $\delta$ and $d$ are X-outer, then $\delta d(L)\subseteq Z(R)$ iff there exists $\beta\in Z(R)$ such that neither $d(\beta)=0$ nor $\delta(\beta)=0$, and there exist $g\in RC$, $\mu\in C$, and $h\in [RC, RC]$ such that
\begin{eqnarray}
\delta(\beta)d+d(\beta)\delta=\text{\rm ad}_g,\ d^2=\mu d+\text{\rm ad}_h\ \text{\rm and}\ \ \delta(\beta)\mu z+[g, z]\in C\ \forall z\in LC,
\label{eq:121}
\end{eqnarray}
and, moreover, if neither $d(LC)\subseteq LC$ nor $g\in [RC, RC]$, then $g^2+\delta(\beta)\mu g\in LC$.
\end{thm}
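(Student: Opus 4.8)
Since $L$ is noncentral abelian, Lemma~\ref{lem23} gives $L\subseteq[RC,RC]$ and Lemma~\ref{lem19}(i) gives $LC=[a,RC]=Ca+C$ for any $a\in L\setminus Z(R)$; in particular $a\in[RC,RC]$, so $a^2\in C$ by Lemma~\ref{lem6}. Because $R$ and $RC$ satisfy the same differential identities (\cite{lee1992}), because $LC=[a,RC]$ with $\delta d$ additive, and because $\delta d(R)\subseteq R$, the condition $\delta d(L)\subseteq Z(R)$ is equivalent to: $\delta d(x)\in C$ for all $x\in LC$, equivalently the differential identity $\delta d([a,x])\in C$ for all $x\in RC$. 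Since $a\in[a,RC]$ this forces $\delta d(a)\in C$, and expanding by the second-order Leibniz rule the identity then reduces to
\[
[d(a),\delta(x)]+[\delta(a),d(x)]+[a,\delta d(x)]\in C\qquad(x\in RC),
\]
which I call $(\ast)$. I will also use the following fact, obtained by extending scalars to the algebraic closure $F$ of $C$ and conjugating to $e_{12}$ exactly as in the proof of Lemma~\ref{lem6}: if $b\in[RC,RC]\setminus C$, then $\{c\in RC:[c,b]\in C\}=[RC,RC]$.

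In cases (i)--(iii) at most one of $\delta,d$ is X-outer, so $(\ast)$ (or directly $\delta d([a,x])\in C$) carries at most one outer derivation, which Proposition~\ref{pro3}(i) lets us isolate. In case (i), $d=\ad_b$ with $b\in RC$, and the ``coefficient of $\delta(x)$'' in $(\ast)$ is $[b,[a,RC]]\subseteq C$; since $[a,RC]=Ca+C$ this says $[b,a]\in C$, so $b\in[RC,RC]$ by the fact above, i.e.\ $d([R,R])\subseteq Z(R)$ by Lemma~\ref{lem14}. In case (iii), $\delta=\ad_g$, and isolating the X-outer $d$ leaves $[g,[a,RC]]\subseteq C$, i.e.\ $[g,a]\in C$, whence $g\in[RC,RC]$ and $\delta([R,R])\subseteq Z(R)$. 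In case (ii), $\delta d=\ad_g\ad_b$ is generalized linear and $\delta d([a,RC])\subseteq C$ collapses to $[g,[b,a]]\in C$; the fact above then forces one of $b,g$ into $[RC,RC]$ according to whether $[b,a]\in C$. The converses are immediate: if $d([R,R])\subseteq Z(R)$ then $d=\ad_b$ with $b\in[RC,RC]$, so $d(LC)\subseteq[[RC,RC],[RC,RC]]\subseteq C$ by Lemma~\ref{lem20} and $\delta d(L)\subseteq\delta(C)\subseteq Z(R)$; and if $\delta([R,R])\subseteq Z(R)$ then $\delta=\ad_g$ with $g\in[RC,RC]$, while $d(LC)\subseteq[RC,RC]$ for any derivation $d$, so $\delta d(L)\subseteq[[RC,RC],[RC,RC]]\subseteq C\cap R=Z(R)$.

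For case (iv) both $\delta,d$ are X-outer. First, $\delta$ and $d$ must be $C$-dependent modulo X-inner derivations: otherwise $\delta d$ is a genuine length-two reduced word and the coefficient of $x^{\delta d}$ in $(\ast)$, by Kharchenko's theorem \cite{kharchenko1978}, gives $[a,RC]\subseteq C$, contradicting $a\notin Z(R)$. Hence $\delta=\lambda d+\ad_f$ with $\lambda\in C^{\times}$, $f\in RC$; evaluating at $\beta$ gives $\lambda=\delta(\beta)/d(\beta)$, and with $g:=d(\beta)f$ this is the first relation of \eqref{eq:121}. Substituting into $(\ast)$ the two $[d(a),d(x)]$ terms cancel in characteristic $2$, leaving a differential identity in $x,d(x),d^2(x)$; since $d$ is X-outer, the coefficient of $x^{d^2}$ must vanish unless $d^2=\mu d+\ad_h$ for some $\mu\in C$, $h\in RC$, and the vanishing alternative again yields $a\in Z(R)$ — this is the second relation. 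Feeding $d^2(x)=\mu d(x)+[h,x]$ back and separating the X-outer $d$ once more produces two residual identities: the coefficient of $d(x)$, after a Jacobi rearrangement, reduces to $[f,v]+\lambda\mu v\in C$ for all $v\in[a,RC]=LC$, which (multiply by $d(\beta)$) is the third relation of \eqref{eq:121}; and the $x$-constant part $[d(a),[f,x]]+\lambda[a,[h,x]]\in C$, from which — writing $h=h_0+\sigma e$ with $h_0\in[RC,RC]$ and $e\notin[RC,RC]$ fixed, so that $[a,[h_0,x]]\in C$ by Lemma~\ref{lem20} — one extracts $\sigma=0$ (hence $h\in[RC,RC]$) and, in the case $d(a)\notin LC$ and $g\notin[RC,RC]$, the remaining condition $g^2+\delta(\beta)\mu g\in LC$.

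For the converse of (iv), \eqref{eq:121} returns $\delta=\nu d+\ad_f$ with $\nu=\delta(\beta)/d(\beta)$, $f=d(\beta)^{-1}g$, so for $x\in LC$ one has $\delta d(x)=\nu\mu d(x)+\nu[h,x]+[f,d(x)]$ with $\nu[h,x]\in C$ (as $h,x\in[RC,RC]$), and it remains to prove $\nu\mu d(x)+[f,d(x)]\in C$. If $d(LC)\subseteq LC$ this is the third relation applied to $d(x)$. Otherwise $d(a)\notin LC$, and $a^2\in C$ gives $[d(a),a]=d(a^2)\in C$, so $d(a)\in[RC,RC]$ by the fact above; by $C$-linearity it suffices to treat $x=a$, and passing to $\M_2(F)$ with $a$ conjugated to $e_{12}$ one checks that $g^2+\delta(\beta)\mu g\in LC$ is exactly $\mathrm{trd}(f)=\nu\mu$, which forces the off-diagonal part of $\nu\mu d(a)+[f,d(a)]$ to vanish; while if $g\in[RC,RC]$ then evaluating the third relation at $v=a$ forces $\mu=0$, so no extra hypothesis is needed and the computation still goes through. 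I expect the main obstacle to be case (iv): running the two successive Kharchenko separations cleanly — isolating the $x^{\delta d}$ and $x^{d^2}$ coefficients and confirming that their vanishing alternatives are genuinely impossible — and then matching the intrinsic relations of \eqref{eq:121} against the $2\times2$ computations over $F$, especially the extraction of $\sigma=0$ and of the condition $g^2+\delta(\beta)\mu g\in LC$ from the $x$-constant residual identity.
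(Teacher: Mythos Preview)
Your treatment of (i)--(iii) is correct and close to the paper's: reducing to $\delta d([a,x])\in C$, expanding by Leibniz, and peeling off the single X-outer derivation via Proposition~\ref{pro3} is exactly the mechanism the paper uses (though the paper phrases it through $[z,x]$ with $z\in LC$ rather than a fixed $a$).

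The gap is in part (iv), forward direction. After substituting $\delta=\lambda d+\ad_f$ and $d^2=\mu d+\ad_h$ into $(\ast)$ and separating off the $d(x)$-coefficient, your ``$x$-constant part'' is
\[
[d(a),[f,x]]+\lambda[a,[h,x]]\in C\qquad(x\in RC).
\]
But this identity is \emph{vacuous}: since $a\in[RC,RC]$ and any derivation preserves $[RC,RC]$, we have $d(a)\in[RC,RC]$; and $[f,x],[h,x]\in[RC,RC]$ for every $x$; so each summand already lies in $\big[[RC,RC],[RC,RC]\big]\subseteq C$ by Lemma~\ref{lem20}. Your proposed decomposition $h=h_0+\sigma e$ therefore yields no constraint on $\sigma$, and neither $h\in[RC,RC]$ nor $g^2+\delta(\beta)\mu g\in LC$ can be extracted from this line. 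The paper avoids this collapse by first rewriting $d(\beta)\delta d=\delta(\beta)d^2+\ad_g\,d$ and applying it to $[z,y]$ with $z\in LC$ a \emph{parameter} and $y\in RC$ the Kharchenko variable; after separating $d(y)$ one is left with $\delta(\beta)[d^2(z),y]+[g,[d(z),y]]\in C$, and only \emph{then} expanding $d^2(z)=\mu d(z)+[h,z]$ produces the surviving term $\delta(\beta)\big[[h,z],y\big]$. It is this term, combined with the third relation and a case split on $d(LC)\subseteq LC$ and $g\in[RC,RC]$, that pins down $h\in[RC,RC]$ and the quadratic condition on $g$.

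A smaller issue in your converse for (iv): ``by $C$-linearity it suffices to treat $x=a$'' is not valid, since $\delta d$ is not $C$-linear and $d(\alpha a)=d(\alpha)a+\alpha d(a)$ mixes the two pieces. The paper handles this by commuting $d(\beta)\delta d(z)$ with $g$ and using $g^2+\delta(\beta)\mu g\in LC$ to land in $\big[[RC,RC],[RC,RC]\big]\subseteq C$, then invoking $RC=Cg+[RC,RC]$; your $\M_2(F)$ computation can be made to work too, but you must treat the full element $d(\alpha)a+\alpha d(a)$, not just $d(a)$.
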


\begin{proof}
Since $L$ is a noncentral abelian Lie ideal of $R$, it follows from Remark \ref{remark1} (i) that $L$ and $LC$ satisfy the same differential identities. Thus $\delta d(L)\subseteq Z(R)$ iff $\delta d(LC)\subseteq C$, and $d(L)\subseteq Z(R)$
iff $d(LC)\subseteq C$.

(i)\ Assume that $d$ is X-inner and $\delta$ is X-outer.
We can write $d=\text{\rm ad}_a$ for some $a\in RC$.

``$\Rightarrow$":\  Let $z\in LC$ and $x\in RC$. Since $\delta d(L)\subseteq Z(R)$, we get
$
\delta(\big[a, [z, x]\big])\in C
$
and hence
$$
\big[\delta(a), [z, x]\big]+\big[a, [\delta(z), x]\big]+\big[a, [z, \delta(x)]\big]\in C.
$$
In view of Proposition \ref{pro3}, $\big[a, [z, RC]\big]\subseteq C$ for all $z\in LC$. By Theorem \ref{thm28}, we get
$a\in [RC, RC]$ and so $d([R, R])\subseteq Z(R)$, as desired.

``$\Leftarrow$": Since $d([R, R])\subseteq Z(R)$, it follows from Lemma \ref{lem14} that $a\in [RC, RC]$. Note that
$LC\subseteq [RC, RC]$. Thus
$$
\delta d(L)=\delta \big([a, L]\big)\subseteq \delta \big(\big[[RC, RC], [RC, RC]\big]\big)\subseteq \delta(C)\subseteq C.
$$
Hence $\delta d(L)\subseteq Z(R)$.

(ii)\ Write $d=\text{\rm ad}_a$ and $\delta=\text{\rm ad}_b$ for some $a, b\in RC$

``$\Rightarrow$":\  Since $\delta d(L)\subseteq Z(R)$, we get
$
\big[b, \big[a, [z, RC]\big]\big]\subseteq C
$
for all $z\in LC$. Note that $LC\nsubseteq C$.  It follows from  Theorem \ref{thm28} that either $a\in [RC, RC]$ or $b\in [RC, RC]$.
That is, either $d([R, R])\subseteq Z(R)$ or $\delta([R, R])\subseteq Z(R)$ (see Lemma \ref{lem14}).

The proof of ``$\Leftarrow$" is similar to that of ``$\Leftarrow$" of (i).

(iii)\ Assume that $d$ is X-outer and $\delta$ is X-inner. Write $\delta=\text{\rm ad}_b$ for some $b\in RC$.

``$\Rightarrow$":\ Let $z\in LC$ and $x\in RC$. Then $[z, x]\in LC$. Since $\delta d(L)\subseteq Z(R)$, we get
$
\big[b, d([z, x])\big]\in C
$
and so
$
\big[b, [d(z), x]\big]+\big[b,[z, d(x)]\big]\in C.
$
It follows from Proposition \ref{pro3} that $\big[b, [z, RC]\big]\in C$ for all $z\in LC$ and $x\in RC$. By Theorem \ref{thm28}, $b\in [RC, RC]$
and so $\delta([R, R])\subseteq Z(R)$ (see Lemma \ref{lem14}).

The proof of ``$\Leftarrow$" is similar to that of ``$\Leftarrow$" of (i).

(iv)\ Assume that both $\delta$ and $d$ are X-outer.
In view of Lemma \ref{lem15}, there exists $\beta\in Z(R)$ such that neither $d(\beta)=0$ nor $\delta(\beta)=0$.

``$\Rightarrow$":\ Let $z\in LC$ and $\beta\in C$. Then
$$
\delta d(\beta z)=\delta d(\beta)z + \big(\delta(\beta)d+d(\beta)\delta\big)(z)+\beta\delta d(z)\in C,
$$
and so
 \begin{eqnarray}
\eta(z)+\delta d(\beta)z\in C,
\label{eq:2}
\end{eqnarray}
where $\eta:=\delta(\beta)d+d(\beta)\delta$.
Suppose on the contrary that $\eta$ is X-outer. By the Skolem-Noether theorem, $\eta(\alpha)\ne 0$ for some $\alpha\in C$.
Let $z\in LC$. Then, by Eq.\eqref{eq:2}, $\eta(\alpha z)+\delta d(\beta)\alpha z\in C$, implying that $\eta(\alpha)z\in C$. Thus $LC\subseteq C$, a contradiction. Thus $\eta$ is X-inner.
 Write $\eta=\delta(\beta)d+d(\beta)\delta=\text{\rm ad}_g$ for some $g\in RC$.

Let $z\in LC$ and $y\in RC$. Then $[z, y]\in LC$ and so
\begin{eqnarray*}
\delta(\beta)d^2([z, y])+[g, d([z, y])]=d(\beta)\delta d([z, y])\in C.
\end{eqnarray*}
Note $d^2$ is a derivation of $RC$. Thus
\begin{eqnarray}
\delta(\beta)\big([d^2(z), y]+[z, d^2(y)]\big)+\big[g,[d(z), y]+[z, d(y)]\big]\in C.
\label{eq:113}
\end{eqnarray}
Suppose on the contrary that $d^2$ and $d$ are $C$-independent modulo X-inner derivations.
Applying Proposition \ref{pro3} (ii) to Eq.\eqref{eq:113}, we get
$$
\delta(\beta)\big([d^2(z), y]+[z, y_2]\big)+\big[g,[d(z), y]+[z, y_1]\big]\in C
$$
for all $z\in LC$ and $y, y_1, y_2\in RC$.
In particular,
$\delta(\beta)[z, y_2]\in C$ for all $z\in LC$ and $y_2\in RC$, that is,
$\delta(\beta)\big[RC, [LC, RC]\big]=0$, implying that $LC\subseteq C$ (see Lemma \ref{lem8} (ii)), a contradiction.

Thus $d^2=\mu d+\text{\rm ad}_h$ for some $\mu\in C$ and $h\in RC$.
  Let
 $$
 \varphi(z):=\delta(\beta)d^2(z)+[g, d(z)]=d(\beta)\delta d(z)\in C
 $$
 for $z\in LC$. Note that $d^2$ is a derivation of $RC$. Moreover, $d^2(\alpha)=\mu d(\alpha)$ for $\alpha\in C$.
Let $z\in LC$. Since $d$ is X-outer, $d(\alpha)\ne 0$ for some $\alpha\in C$.
Then
\begin{eqnarray*}
\varphi(\alpha z)+\alpha\varphi(z)&=&\delta(\beta)d^2(\alpha z)+[g, d(\alpha z)]+\alpha\delta(\beta)d^2(z)+\alpha[g, d(z)]\\
                                                               &=&\delta(\beta)d^2(\alpha)z+d(\alpha)[g, z]\\
                                                               &=&d(\alpha)\big(\delta(\beta)\mu z+[g, z]\big)\in C.
\end{eqnarray*}
Hence
\begin{eqnarray}
\delta(\beta)\mu z+[g, z]\in C
\label{eq:117}
\end{eqnarray}
for all $z\in LC$.  By Eq.\eqref{eq:113} we get
\begin{eqnarray}
\delta(\beta)\big([d^2(z), y]+\big[z, \mu\,d(y)+[h, y]\big]\big)+\big[g,[d(z), y]+[z, d(y)]\big]\in C
\label{eq:115}
\end{eqnarray}
for all $z\in LC$ and $y\in RC$. Applying Proposition \ref{pro3} to Eq.\eqref{eq:115}, we get
\begin{eqnarray*}
\delta(\beta)\big([d^2(z), y]+\big[z, \mu\,y_1+[h, y]\big]\big)+\big[g, [d(z), y]+[z, y_1]\big]\in C
\end{eqnarray*}
for all $z\in LC$ and $y, y_1\in RC$.  Clearly, $\big[z, [h, y]\big]\in C$.
This implies that
\begin{eqnarray*}
\delta(\beta)[d^2(z), y]+\big[g, [d(z), y]\big]\in C
\end{eqnarray*}
for all $z\in LC$ and $y\in RC$.
Thus
\begin{eqnarray}
\delta(\beta)[d^2(z), y]+\big[g, [d(z), y]\big]&=&\delta(\beta)\big[\mu d(z)+[h, z], y\big]+\big[g, [d(z), y]\big]\nonumber\\
                                                                                   &=&\delta(\beta)\mu[d(z), y]+\big[g, [d(z), y]\big]+\delta(\beta)\big[[h, z], y\big]\nonumber\\
                                                                                   &&\in C
                                                                                   \label{eq:116}
\end{eqnarray}
for all $z\in LC$ and $y\in RC$.

{\bf Case 1}:\ $d(LC)\subseteq LC$. Then $[d(z), y]\in LC$ for $z\in LC$ and $y\in RC$. In view of Eq.\eqref{eq:117}, $\delta(\beta)\mu[d(z), y]+\big[g, [d(z), y]\big]\in C$. Thus, by Eq.\eqref{eq:116}, we have $\big[[h, LC], RC\big]\subseteq C$ and so $[h, LC]\subseteq C$ (see Lemma \ref{lem8} (ii)).
If $[h, LC]\ne 0$, it follows from \cite[Lemma 2.5]{lee2025} that $h\in [RC, RC]$. Suppose that $[h, LC]=0$. It follows from \cite[Lemma 5.1]{lee2025} that $h\in LC\subseteq [RC, RC]$.

{\bf Case 2}:\ $d(LC)\nsubseteq LC$. Note that $d(LC)+LC$ is a Lie ideal of $RC$ and is a $C$-subspace of $RC$, implying that
$[RC, RC]=d(LC)+LC$.
In view of Eq.\eqref{eq:116} and Eq.\eqref{eq:117}, we have
\begin{eqnarray*}
&&\delta(\beta)\mu[d(z)+w, y]+\big[g, [d(z)+w, y]\big]+\delta(\beta)\big[[h, z], y\big]\\
&=&\big(\delta(\beta)\mu[d(z), y]+\big[g, [d(z), y]\big]+\delta(\beta)\big[[h, z], y\big]\big)\\
&&\ \ \ \ \ \ \ \ \ \ \       + \big(\delta(\beta)\mu[w, y] +\big[g, [w, y]\big]\big)\in C
\end{eqnarray*}
for all $z, w\in LC$ and $y\in RC$. Since  $[RC, RC]=d(LC)+LC$, we get
\begin{eqnarray}
\delta(\beta)\mu[[x_1, x_2], y]+\big[g, [[x_1, x_2], y]\big]\in \delta(\beta)\big[[h, LC], RC\big]+C\subseteq LC
       \label{eq:122}
\end{eqnarray}
for all $x_1, x_2, y\in RC$. By Theorem \ref{thm24}, we have
$$
\big[[RC, RC], RC\big]=\big[\overline{[RC, RC]}, RC\big]=[RC, RC].
$$
By Eq.\eqref{eq:122}, we get
\begin{eqnarray}
\delta(\beta)\mu x+[g, x]\in LC
       \label{eq:119}
\end{eqnarray}
for all $x\in [RC, RC]$.

{\bf Subcase 1}:\ $g\in [RC, RC]$. Then $[g, x]\in C\subseteq  LC$ for all $x\in [RC, RC]$.
By Eq.\eqref{eq:119}, we have $\delta(\beta)\mu [RC, RC]\subseteq LC$, implying that $\mu=0$. Thus $d^2=\ad_h$.
We claim that $h\in [RC, RC]$. Indeed, let $z\in LC$. Then
$$
d(\beta)\delta d(z)=\delta(\beta)d^2(z)+[g, d(z)]=\delta(\beta)[h, z]+[g, d(z)]\in C.
$$
Since $[g, d(z)]\in \big[[RC, RC], [RC, RC]\big]\subseteq C$, we get $\delta(\beta)[h, z]\in C$. Hence $[h, LC]\subseteq C$.
If $[h, LC]\ne 0$, it follows from \cite[Lemma 4.5]{lee2025} that $h\in [RC, RC]$. Otherwise, $[h, LC]=0$. Then $\big[h, [LC, RC]\big]=0$.
In view of Lemma \ref{lem2}, $h\in LC\subseteq [RC, RC]$, as desired.

{\bf Subcase 2}:\ $g\notin [RC, RC]$. We claim that $g^2+\delta(\beta)\mu g\in LC$ and $h\in [RC, RC]$.
In this case, $RC=Cg+[RC, RC]$.
Let $y\in RC$. Write $y=x+\alpha g$ for some $x\in [RC, RC]$ and $\alpha \in C$.
Thus
$$
\delta(\beta)\mu y+[g, y]=\big(\delta(\beta)\mu x+[g, x]\big)+\delta(\beta)\mu\alpha g\in LC+Cg.
$$
Commuting it with $g$, we get
$$
\big[g^2+\delta(\beta)\mu g, y\big]=\big[g, \delta(\beta)\mu y+[g, y]\big]\in [g, LC+Cg]\subseteq LC
$$
for all $y\in RC$. This implies that
$
\big[LC, \big[g^2+\delta(\beta)\mu g, RC\big]\big]=0
$
and so
$$
\big[g^2+\delta(\beta)\mu g, \big[LC, RC\big]\big]=0,
$$
since $(\ad_a\ad_b)^*=\ad_b\ad_a$ for $a, b\in RC$.
In view of Lemma \ref{lem2}, we get $g^2+\delta(\beta)\mu g\in LC$, as desired.
The final step is to prove that $h\in [RC, RC]$.
Let $z\in LC$. Then $d(z)\in [RC, RC]$ and so
\begin{eqnarray*}
d(\beta)\delta d(z)&=&\delta(\beta)d^2(z)+[g, d(z)]\\
                                  &=&\delta(\beta)\big(\mu d(z)+[h, z]\big)+[g, d(z)]\\
                                   &=&\big(\delta(\beta)\mu d(z)+[g, d(z)]\big)+\delta(\beta)[h, z]\in C.
\end{eqnarray*}
Then
\begin{eqnarray*}
0&=&\big[g, d(\beta)\delta d(z)\big]\\
  &=&\big[g, \big(\delta(\beta)\mu d(z)+[g, d(z)]\big)+\delta(\beta)[h, z]\big]\\
  &=&\big[g^2+\delta(\beta)\mu g, d(z)\big]+\delta(\beta)\big[g, [h, z]\big].
\end{eqnarray*}
Since $\big[g^2+\delta(\beta)\mu g, d(z)\big]\in [LC, d(LC)]\subseteq \big[[RC, RC], [RC, RC]\big]\subseteq C$, we get
$\big[g, [h, z]\big]\in C$. That is,
$
\big[g, [h, LC]\big]\subseteq  C
$
and so $\big[g, [h, [LC, RC]]\big]\subseteq  C$. Recall that $LC\nsubseteq C$. In view of Theorem \ref{thm28}, $h\in [RC, RC]$, as claimed.

``$\Leftarrow$": For $a, b\in RC$, we let $a\equiv b$ stand for $a-b\in C$. Let $z\in LC$.
Then $d(z)\in [RC, RC]$ and $[h, z]\in C$ as $h\in [RC, RC]$. In view of Eq.\eqref{eq:121}, we have
\begin{eqnarray}
d(\beta)\delta d(z)&=&\big(\delta(\beta)\mu d(z)+[g, d(z)]\big)+\delta(\beta)[h, z]\nonumber\\
                                   &\equiv&\delta(\beta)\mu d(z)+[g, d(z)].
\label{eq:120}
\end{eqnarray}
for all $z\in LC$. If $d(LC)\subseteq LC$, then $d(z)\in LC$ and it follows from Eq.\eqref{eq:121} that $\delta(\beta)\big(\mu d(z)+[g, d(z)]\big)\in C$, that is, $\delta d(z)\in C$, as desired. Suppose that $g\in [RC, RC]$. Then $[g, z]\in C$. Since $\delta(\beta)\mu z+[g, z]\in C$ (see Eq.\eqref{eq:121}), we get $\delta(\beta)\mu LC\subseteq C$ and hence $\mu=0$. By Eq.\eqref{eq:120}, it follows that $\delta d(z)\in C$, as desired.

Thus we may assume next that neither $d(LC)\subseteq LC$ nor $g\in [RC, RC]$. Then, by assumption, $g^2+\delta(\beta)\mu g\in LC$. By Eq.\eqref{eq:120}, we have
\begin{eqnarray*}
\big[g, d(\beta)\delta d(z)\big]&\equiv& \big[g, \delta(\beta)\mu d(z)+[g, d(z)]\big]\\
                                                         &\equiv& \big[g^2+\delta(\beta)\mu g, d(z)\big]\\
                                                         &\in& \big[LC, d(LC)\big]\subseteq \big[[RC, RC], [RC, RC]\big]\subseteq C
\end{eqnarray*}
for all $z\in LC$. Since $g\notin [RC, RC]$, it follows that $RC=Cg+[RC, RC]$. Thus, let $z\in LC$, we have
\begin{eqnarray*}
\big[RC, d(\beta)\delta d(z)\big]&=&\big[Cg+[RC, RC], d(\beta)\delta d(z)\big]\\
                                                            &\subseteq & \big[Cg, d(\beta)\delta d(z)\big]+\big[[RC, RC], d(\beta)\delta d(z)\big]\\
                                                             &\subseteq &C,
\end{eqnarray*}
since $\big[[RC, RC], d(\beta)\delta d(z)\big]\subseteq \big[[RC, RC], [RC, RC]\big]\subseteq C$.
Hence $d(\beta)\delta d(z)\in C$, completing the proof.
\end{proof}

\section{Problem \ref{problem1}: The nonabelian case}
In this section, {\it $R$ is always an exceptional prime ring with extended centroid $C$.}

Let $K$ be a noncentral abelian Lie ideal of $RC$. We claim that $KC=K$. Indeed, it follows from \cite[Lemma 4.1]{lee2025} that $KC=[a, RC]$ for some $a\in K\setminus C$. Since $K$ is a Lie ideal of $RC$, we get $KC=[a, RC]\subseteq K$ and so $K=KC$.

\begin{lem}\label{lem18}
Let  $d$ be an X-outer derivation of $R$. Then there exist infinitely many distinct noncentral abelian Lie ideals $L_i$ of $RC$ satisfying $d(L_i)\nsubseteq L_i$ for all $i$.
\end{lem}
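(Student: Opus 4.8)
The plan is to combine the classification of noncentral abelian Lie ideals of $RC$ with a one-parameter ``pencil'' argument and an elementary fact about derivations of fields.

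\emph{Set-up.} Since $d$ is X-outer it cannot vanish on $C$: otherwise the Skolem--Noether theorem, applied to the $4$-dimensional central simple $C$-algebra $RC$, would force $d$ to be X-inner. Fix $\alpha\in C$ with $d(\alpha)\ne0$; note that $C$ is then infinite, since a finite field is a separable algebraic extension of its prime field and hence admits no nonzero derivation. By \cite[Lemma 4.1]{lee2025} (as used in the paragraph preceding this lemma) together with Lemma \ref{lem6}, the noncentral abelian Lie ideals of $RC$ are exactly the $2$-dimensional subspaces $L_a:=Ca+C$ with $a\in[RC,RC]\setminus C$; here the fact that each such $L_a$ is a Lie ideal also follows directly from $\ad_a^2=0$ on $RC$ (because $a^2\in Z(R)$), which gives $[a,RC]\subseteq Ca+C$. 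Since $C\subseteq[RC,RC]$ and $\dim_C[RC,RC]=3$, these ideals are in bijection with the lines of the $2$-dimensional space $[RC,RC]/C$, i.e. with $\mathbb P^1(C)$; as $C$ is infinite there are infinitely many of them. Finally, because $d(C)\subseteq C$, $d(1)=0$, and $L_a$ is a $C$-subspace, one has $d(L_a)\subseteq L_a$ iff $d(a)\in Ca+C$.

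\emph{Main step.} Argue by contradiction: suppose $d(L)\not\subseteq L$ for only finitely many noncentral abelian Lie ideals $L$, so $d(L)\subseteq L$ for all but finitely many of them. Pick $a,b\in[RC,RC]$ whose images in $[RC,RC]/C$ are linearly independent and with $d(L_a)\subseteq L_a$, $d(L_b)\subseteq L_b$; then $\{1,a,b\}$ is $C$-linearly independent, and we may write $d(a)=\phi_a a+\mu_a$, $d(b)=\phi_b b+\mu_b$ with $\phi_a,\phi_b,\mu_a,\mu_b\in C$. For all but finitely many $\lambda\in C$ the element $a+\lambda b$ spans (mod $C$) one of the ``good'' lines, so $d(a+\lambda b)\in C(a+\lambda b)+C$. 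Expanding the left-hand side as $\phi_a a+\bigl(d(\lambda)+\lambda\phi_b\bigr)b+(\mu_a+\lambda\mu_b)$ and comparing coefficients against the independent set $\{1,a,b\}$ forces $d(\lambda)=\lambda(\phi_a+\phi_b)$ for all such $\lambda$. Putting $c:=\phi_a+\phi_b\in C$, the Leibniz rule applied to a product $\lambda\mu$ of two generic elements gives $c\lambda\mu=d(\lambda\mu)=\lambda d(\mu)+\mu d(\lambda)=2c\lambda\mu=0$, whence $c=0$; thus $d$ vanishes on a cofinite subset of $C$, hence on all of $C$ by additivity, contradicting $d(\alpha)\ne0$.

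I expect the only delicate point to be the passage from ``cofinitely many $L_a$ are $d$-invariant'' to the single scalar identity $d(\lambda)=c\lambda$ on $C$: this is exactly where one must vary $a$ within the pencil $a+\lambda b$ and exploit the $C$-independence of $\{1,a,b\}$, together with the fact that only finitely many $\lambda$ are excluded. Everything else — the description of the $L_a$, the reduction $d(L_a)\subseteq L_a\iff d(a)\in Ca+C$, and the final characteristic-$2$ Leibniz computation — is routine.
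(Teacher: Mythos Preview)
Your argument is correct, and it takes a genuinely different route from the paper's.

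The paper proceeds \emph{constructively}: it first reformulates $d(L_w)\nsubseteq L_w$ as $d(w^2)\ne0$ (via $[d(w),w]$), finds one $u\in[RC,RC]$ with $d(u^2)\ne0$, completes to a basis $[RC,RC]=Cu\oplus Cv\oplus C$ with $[u,v]=1$, chooses $\beta\in C$ with $d(\beta)\ne0$, and then computes
\[
d\bigl((u+\beta\alpha v)^2\bigr)=d(u^2)+\alpha\,d(\beta)+\beta^2\alpha^2 d(v^2)
\]
for $\alpha$ ranging over the infinite field $C^{(2)}$ of squares; this is a nonzero polynomial in $\alpha$, so it is nonzero for infinitely many $\alpha$, and the resulting $L_\alpha=C(u+\beta\alpha v)+C$ are checked pairwise distinct by computing $[L_i,L_j]$.

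Your approach is by contradiction and more linear-algebraic: you parametrize all noncentral abelian Lie ideals by $\mathbb P^1(C)$, observe that $d$-invariance of $L_a$ reads $d(a)\in Ca+C$, and then show that if cofinitely many points of $\mathbb P^1(C)$ were $d$-invariant, matching coefficients along the pencil $a+\lambda b$ forces $d(\lambda)=c\lambda$ on a cofinite set, hence on all of $C$ by additivity, and then $c=0$ by Leibniz in characteristic~$2$, contradicting $d(C)\ne0$. This avoids the $d(w^2)$ criterion and the $C^{(2)}$ trick entirely; the paper's version, on the other hand, actually exhibits the infinite family and never needs to know how derivations of $C$ behave globally. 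One small point worth making explicit in your write-up: when you invoke Leibniz on ``generic'' $\lambda,\mu$, you need $\lambda,\mu,\lambda\mu$ all in the cofinite set with $\lambda\mu\ne0$, which is immediate since for fixed nonzero $\lambda$ the set $\lambda^{-1}S\cap S$ is again cofinite; alternatively, note first that $\{\lambda:d(\lambda)=c\lambda\}$ is an additive subgroup, hence already equals $C$.
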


\begin{proof}
Note that every noncentral abelian Lie ideal of $RC$ is of the form $Cw+C$ for some $w\in [RC, RC]\setminus C$ (see \cite[Theorem 4.4]{lee2025}).
Moreover, two noncentral abelian Lie ideals $K, L$ of $RC$ are distinct iff $[K, L]\ne 0$ (see \cite[Theorem 6.3]{lee2025}).
We also notice that a noncentral abelian Lie ideal $K=Cw+C$, where $w\in [RC, RC]\setminus C$, satisfies $d(K)\nsubseteq K$ iff $d(w^2)\ne 0$. Indeed, if $d(K)\subseteq K$, then $d(w)\in Cw+C$ and so $[d(w), w]=0$ and so $d(w^2)=0$. Conversely, if $[d(w), w]=0$, then, by \cite[Lemma 5.1]{lee2025}, $d(w)\in Cw+C$ and so $d(K)\subseteq K$, as desired.

Since $d$ is an X-outer derivation of $R$, it follows from the Skolem-Noether theorem that $d(C)\ne 0$. Note that $C=\big[[RC, RC], [RC, RC]\big]$ and so
$
d(\big[[RC, RC], [RC, RC]\big])\ne 0.
$
Then there exists $u\in  [RC, RC]$ such that $d(u^2)\ne 0$. Otherwise, $d(u^2)=0$ for all $u\in [RC, RC]$. Linearizing it, we get $d([u, v])=0$ for all
$u, v\in [RC, RC]$, that is, $d(\big[[RC, RC], [RC, RC]\big])=0$, a contradiction.

Choose $u\in [RC, RC]\setminus C$ such that $d(u^2)\ne 0$ and let $K:=Cu+C$. Then $K$ is a noncentral abelian Lie ideal of $RC$ such that $d(K)\nsubseteq K$.

Write $[RC, RC]=Cu\oplus Cv\oplus C$. Note that $[u, v]\ne0$, otherwise $v\in Cu+C$ (see \cite[Lemma 5.1]{lee2025}). We may assume that $[u, v]=1$.
Since $d$ is X-outer, $d(\beta)\ne 0$ for some $\beta\in C$. In particular, $C$ is an infinite field and
$$
C^{(2)}:=\{\alpha^2\mid \alpha\in C\}
$$
is also an infinite field. We claim that there exist infinitely many $\alpha_j\in C^{(2)}$ such that $d((u+\beta\alpha_jv)^2)\ne 0$.
Indeed, $d(\alpha_j)=0$ for all $j$ and so
\begin{eqnarray*}
 &&d((u+\beta\alpha_jv)^2)\\
  &=&d(u^2)+d(\beta\alpha_j[u, v])+\beta^2\alpha_j^2d(v^2)\\
  &=&d(u^2)+\beta\alpha_jd([u, v])+\alpha_jd(\beta)[u, v]+\beta^2\alpha_j^2d(v^2)\\
  &=&d(u^2)+\alpha_jd(\beta)+\beta^2\alpha_j^2d(v^2)\ne 0
  \end{eqnarray*}
for infinitely many $\alpha_j\in C^{(2)}$. This proves that $d((u+\beta\alpha_jv)^2)\ne 0$ for some $0\ne \alpha_j\in C^{(2)}$.
Let $L_j:=C(u+\beta\alpha_jv)+C$ for all $j$. Then every $L_j$ is a noncentral abelian Lie ideal of $R$ satisfying $d(L_j)\nsubseteq L_j$ for any $j$.
Moreover, for $i\ne j$, we have
$$
[L_i, L_j]=\big[C(u+\beta\alpha_iv)+C, C(u+\beta\alpha_jv)+C\big]=C\beta(\alpha_i+\alpha_j)\ne 0.
$$
Thus all $L_i$'s are distinct (see \cite[Theorem 6.3]{lee2025}).
\end{proof}

We now prove the following theorem, i.e., Theorem E in the introduction.

\begin{thm}\label{thm29}
Let $\delta, d$ be nonzero derivations of $R$, and let $I$ be a nonzero ideal of $R$.

(i)\ If $d$ is X-inner and $\delta$ is X-outer, then $\delta d([I, I])\subseteq Z(R)$ iff $d([R, R])\subseteq Z(R)$;

(ii)\ If both $d$ and $\delta$ are X-inner, then $\delta d([I, I])\subseteq Z(R)$ iff either $d([R, R])\subseteq Z(R)$ or $\delta([R, R])\subseteq Z(R)$;

(iii)\ If $d$ is X-outer and $\delta$ is X-inner, then $\delta d([I, I])\subseteq Z(R)$ iff $\delta([R, R])\subseteq Z(R)$;

(iv)\ If both $\delta$ and $d$ are X-outer, then $\delta d([I, I])\subseteq Z(R)$ iff there exists $\beta\in Z(R)$ such that neither $d(\beta)=0$ nor $\delta(\beta)=0$, and
\begin{eqnarray}
\delta(\beta)d+d(\beta)\delta=\text{\rm ad}_g\ \text{\rm and}\ \ d^2=\mu d+ \text{\rm ad}_h
\label{eq:11}
\end{eqnarray}
for some $g\in RC$, $h\in [RC, RC]$ and $\mu\in C$ such that $\mu=0$ iff $g\in [RC, RC]$, and if $\mu\ne 0$ then
$
g^2+\delta(\beta)\mu g\in C.
$
\end{thm}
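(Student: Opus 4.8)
The plan is to transfer the question to the central closure $RC$. Since $R$ is exceptional, $RC$ is a $4$-dimensional central simple algebra over $C$ of characteristic $2$, hence an exceptional simple ring (in particular an exceptional prime ring) with centre $C$, and $\delta,d$ extend to derivations of $RC$ preserving $C$, staying X-inner resp. X-outer. Because $IC=RC$, the Lie ideal $[I,I]$ of $R$ satisfies $[I,I]C=[RC,RC]$ and is nonabelian of Type I, so by Remark \ref{remark1}(i) it satisfies the same differential identities as $[RC,RC]$; hence $\delta d([I,I])\subseteq Z(R)$ iff $\delta d([RC,RC])\subseteq C$. Moreover every noncentral abelian Lie ideal of $RC$ lies in $[RC,RC]$, while $[RC,RC]$ (of $C$-dimension $3$) is the sum of any two distinct noncentral abelian Lie ideals of $RC$ (each of $C$-dimension $2$, with pairwise intersection $C$); so the last condition is in turn equivalent to $\delta d(K)\subseteq C$ for every noncentral abelian Lie ideal $K$ of $RC$.

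For (i)--(iii) I would repeat the proofs of the corresponding parts of Theorem \ref{thm34}, with $[RC,RC]$ in the role of the abelian Lie ideal: writing the X-inner derivation(s) as $\ad_a$ (and $\ad_b$), expanding $\delta d([z,y])$ for $z\in[RC,RC]$, $y\in RC$, and applying Kharchenko's Proposition \ref{pro3} to remove the X-outer derivation together with the term involving $d(y)$, one obtains $\big[a,[z,RC]\big]\subseteq C$ (resp. $\big[b,[a,[z,RC]]\big]\subseteq C$, resp. $\big[b,[z,RC]\big]\subseteq C$) for every $z\in[RC,RC]\setminus C$. Theorem \ref{thm28} then forces $a$ (resp. $a$ or $b$, resp. $b$) into $[RC,RC]$, and Lemma \ref{lem14} restates this as $d([R,R])\subseteq Z(R)$ or $\delta([R,R])\subseteq Z(R)$ as required. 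The reverse implications are immediate: the relevant derivation is then $\ad$ of an element of $[RC,RC]$, so it maps $[I,I]\subseteq[RC,RC]$ either into $\big[[RC,RC],[RC,RC]\big]\subseteq C$ (with $\delta$ of that staying in $C$) or into $[RC,RC]$ (with the other X-inner derivation then landing in $\big[[RC,RC],[RC,RC]\big]\subseteq C$), and an element of $C$ lying in $R$ lies in $Z(R)$.

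Part (iv) is the heart. With both derivations X-outer, Lemma \ref{lem15} fixes once and for all a $\beta\in Z(R)$ with $d(\beta)\ne0\ne\delta(\beta)$, and Lemma \ref{lem18} supplies distinct noncentral abelian Lie ideals $L_1,L_2$ (indeed infinitely many $L_i$) of $RC$ with $d(L_i)\nsubseteq L_i$; two distinct such meet in $C$ and sum to $[RC,RC]$. For ``$\Rightarrow$'', $\delta d([RC,RC])\subseteq C$ gives $\delta d(L_i)\subseteq C$ for each $i$, so I apply Theorem \ref{thm34}(iv) inside $RC$ to each $L_i$, using the fixed $\beta$ throughout. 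This yields $g\in RC$ with $\delta(\beta)d+d(\beta)\delta=\ad_g$ and $\mu\in C$, $h\in[RC,RC]$ with $d^2=\mu d+\ad_h$ (all depending only on $\beta,\delta,d$, hence not on $i$), together with $\delta(\beta)\mu z+[g,z]\in C$ for $z\in L_i$ and, since $d(L_i)\nsubseteq L_i$, with $g^2+\delta(\beta)\mu g\in L_i$ whenever $g\notin[RC,RC]$. Summing over $L_1,L_2$ upgrades the first relation to all $z\in[RC,RC]$; then $g\in[RC,RC]$ forces $\delta(\beta)\mu z\in C$ on $[RC,RC]$, hence $\mu=0$ (as $\dim_C[RC,RC]=3$), while $\mu=0$ gives $\big[g,[RC,RC]\big]\subseteq C$ and so $g\in[RC,RC]$ by Theorem \ref{thm25}; thus $\mu=0$ iff $g\in[RC,RC]$. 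Finally, when $g\notin[RC,RC]$ we get $g^2+\delta(\beta)\mu g\in L_1\cap L_2=C$. This is exactly the asserted conclusion.

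For ``$\Leftarrow$'' I would first recover $\delta(\beta)\mu z+[g,z]\in C$ for all $z\in[RC,RC]$ from the hypotheses: if $\mu=0$ then $g\in[RC,RC]$ and $[g,z]\in\big[[RC,RC],[RC,RC]\big]\subseteq C$; if $\mu\ne0$ then $g^2+\delta(\beta)\mu g\in C$ gives $\ad_g\big(\delta(\beta)\mu z+[g,z]\big)=\big[g^2+\delta(\beta)\mu g,z\big]=0$, so $\delta(\beta)\mu z+[g,z]\in C_{RC}(g)=C+Cg$, and since it also lies in $[RC,RC]$ (a commutator plus a $C$-multiple of $z$) it lies in $(C+Cg)\cap[RC,RC]=C$ because $g\notin[RC,RC]$. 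With this in hand every hypothesis of Theorem \ref{thm34}(iv) holds for each $L_i$ (its clause on $g^2+\delta(\beta)\mu g$ being met by membership in $C\subseteq L_i$), so $\delta d(L_i)\subseteq C$; summing $L_1,L_2$ gives $\delta d([RC,RC])\subseteq C$, hence $\delta d([I,I])\subseteq Z(R)$ by the first paragraph. The main difficulty I anticipate is the bookkeeping in (iv): checking that $\beta,g,\mu,h$ are genuinely uniform in $i$ (which rests on fixing $\beta$ via Lemma \ref{lem15} and noting $\ad_g=\delta(\beta)d+d(\beta)\delta$ and $d^2=\mu d+\ad_h$ are then pinned down by the derivations alone), together with the quaternion-algebra identity $C_{RC}(g)=C+Cg$ used in ``$\Leftarrow$'' --- essentially the only place where $\dim_CRC=4$ enters structurally rather than by dimension counting.
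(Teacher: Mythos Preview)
Your proposal is correct and follows essentially the same strategy as the paper: reduce to $RC$ via Remark \ref{remark1}(i), decompose $[RC,RC]$ as a sum of two noncentral abelian Lie ideals $K,L$ (with $K\cap L=C$) supplied by Lemma \ref{lem18} with $d(K)\nsubseteq K$ and $d(L)\nsubseteq L$, and then transfer the work to Theorem \ref{thm34}.

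The only differences are cosmetic. For (i)--(iii) the paper simply cites Theorem \ref{thm34}(i)--(iii) applied to $K$ and $L$, whereas you sketch a direct rerun of those proofs with $[RC,RC]$ in place of the abelian Lie ideal; both work. For (iv) ``$\Leftarrow$'' the paper does a short self-contained computation: from $d(\beta)\delta d(z)\equiv \delta(\beta)\mu d(z)+[g,d(z)]$ it commutes with $g$ (using $g^2+\delta(\beta)\mu g\in C$ when $\mu\ne0$) to get $[g,\delta d(z)]=0$, and then uses $RC=Cg+[RC,RC]$ together with $\big[[RC,RC],\delta d(z)\big]\subseteq C$ to conclude $\delta d(z)\in C$. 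Your route --- recovering $\delta(\beta)\mu z+[g,z]\in C$ for all $z\in[RC,RC]$ via the centralizer identity $C_{RC}(g)=Cg+C$ and the intersection $(Cg+C)\cap[RC,RC]=C$, then feeding this back into Theorem \ref{thm34}(iv) for each $L_i$ --- is a legitimate alternative that uses exactly the same structural facts. Your observation that $\beta,g,\mu,h$ are uniform in $i$ (since $\beta$ is fixed by Lemma \ref{lem15} and the relations $\ad_g=\delta(\beta)d+d(\beta)\delta$, $d^2=\mu d+\ad_h$ depend only on $\delta,d,\beta$) is precisely what the paper uses implicitly.
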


\begin{proof}
Since $I$ and $RC$ satisfy the same differential identities
(see \cite[Theorem 2]{lee1992}), it follows that $\delta d([I, I])\subseteq Z(R)$ iff $\delta d([RC, RC])\subseteq C$, and $d([R, R])\subseteq Z(R)$
iff $d([RC, RC])\subseteq C$.

In view of Lemma \ref{lem18}, there exist noncentral abelian Lie ideals $K, L$ of $RC$ such that
$$
[RC, RC]=K+L\ \text{\rm and}\ K\cap L=C.
$$
Recall that $KC=K$ and $LC=L$. Moreover, we can choose $K, L$ such that neither $d(K)\subseteq K$ nor $d(L)\subseteq L$.
In this case, $\delta d([RC, RC])\subseteq C$ iff both $\delta d(K)\subseteq C$ and $\delta d(L)\subseteq C$.

By Lemma \ref{lem18}, (i), (ii) and (iii) follow directly from (i), (ii) and (iii) of Theorem \ref{thm34}, respectively.

Finally, we prove (iv).
For ``$\Rightarrow$", since $\delta d([I, I])\subseteq Z(R)$, we get $\delta d([RC, RC])\subseteq C$. Thus both $\delta d(K)\subseteq C$ and $\delta d(L)\subseteq C$.
 In view of Theorem \ref{thm34}, there exists $\beta\in Z(R)$ such that neither $d(\beta)=0$ nor $\delta(\beta)=0$, and there exist $g\in RC$, $\mu\in C$, and $h\in [RC, RC]$ such that
$$
\delta(\beta)d+d(\beta)\delta=\text{\rm ad}_g,\ d^2=\mu d+\text{\rm ad}_h
$$
and
\begin{eqnarray}
\delta(\beta)\mu z+[g, z]\in C\ \ \forall z\in K+L=[RC, RC].
\label{eq:221}
\end{eqnarray}
Suppose that $\mu=0$. Then, by Eq.\eqref{eq:221}, $\big[g, [RC, RC]\big]\subseteq C$, implying that $g\in  [RC, RC]$. Conversely, assume that $g\in  [RC, RC]$. Then $[g, z]\in C$ for $z\in [RC, RC]$. Hence, by Eq.\eqref{eq:221} again, $\delta(\beta)\mu [RC, RC]\subseteq C$ and so $\mu=0$. This proves that $\mu=0$ iff $g\in [RC, RC]$.

Suppose that $\mu\ne 0$. Then $g\notin [RC, RC]$.
Since neither $d(L)\subseteq L$ nor $g\in [RC, RC]$, it follows from Theorem \ref{thm34} that $g^2+\delta(\beta)\mu g\in L$. Similarly, since $d(K)\nsubseteq K$, we get $g^2+\delta(\beta)\mu g\in K$. By the fact that $K\cap L=C$, we conclude that $g^2+\delta(\beta)\mu g\in C$, as desired.

For ``$\Leftarrow$",  for $a, b\in RC$, we let $a\equiv b$ stand for $a-b\in C$. Let $z\in [RC, RC]$.
Then $d(z)\in [RC, RC]$ and $[h, z]\in C$ as $h\in [RC, RC]$. In view of Eq.\eqref{eq:11}, we have
\begin{eqnarray}
d(\beta)\delta d(z)&=&\delta(\beta)d^2(z)+[g, d(z)]\nonumber\\
                                   &=&\delta(\beta)\big(\mu d(z)+[h, z]\big)+[g, d(z)]\nonumber\\
                                   &\equiv& \delta(\beta)\mu d(z)+[g, d(z)].
\label{eq:320}
\end{eqnarray}
for all $z\in  [RC, RC]$.

Case 1: $\mu=0$. Then $g\in [RC, RC]$ and so $[g, d(z)]\in C$ for $z\in [RC, RC]$.  By Eq.\eqref{eq:320}, $\delta d(z)\in C$ for all $z\in [RC, RC]$,
as desired.

Case 2: $\mu\ne 0$. Then $g\notin [RC, RC]$ and $g^2+\delta(\beta)\mu g\in C$. Commuting Eq.\eqref{eq:320} with $g$, we get
\begin{eqnarray}
d(\beta)[g, \delta d(z)]&=&\big[g, \delta(\beta)\mu d(z)+[g, d(z)]\big]\nonumber\\
                                          &=&\big[g^2+\delta(\beta)\mu g, d(z)\big]\nonumber\\
                                          &=&0
\label{eq:322}
\end{eqnarray}
for all $z\in [RC, RC]$. Since $g\notin [RC, RC]$, we get $RC=Cg+[RC, RC]$. Note that
$
\big[[RC, RC], \delta d(z)\big]\in C
$
for $z\in [RC, RC]$. Then
$$
\big[RC, \delta d(z)\big]\subseteq \big[Cg, \delta d(z)\big]+\big[[RC, RC], \delta d(z)\big]\subseteq C
$$
for all $z\in [RC, RC]$. So $\delta d(z)\in C$ for all $z\in [RC, RC]$ (see Lemma \ref{lem8} (ii)).
\end{proof}

\begin{thm}\label{thm35}
Let $\delta, d$ be derivations of $R$.

(i)\ If $d$ is X-inner and $\delta$ is X-outer, then $\delta d(R)\subseteq Z(R)$ iff $d=0$;

(ii)\ If $d$ and $\delta$ are X-inner, then $\delta d(R)\subseteq Z(R)$ iff either $d=0$ or $\delta([R, R])\subseteq Z(R)$;

(iii)\ If $d$ is X-outer and $\delta$ is X-inner, then $\delta d(R)\subseteq Z(R)$ iff $\delta=0$;

(iv)\ If both $\delta$ and $d$ are X-outer, then $\delta d(R)\subseteq Z(R)$ iff there exists $\beta\in Z(R)$ such that neither $d(\beta)=0$ nor $\delta(\beta)=0$, $\delta(\beta)d=d(\beta)\delta$, and  $d^2=0$.
\end{thm}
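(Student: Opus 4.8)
The plan is to reduce everything to the $4$-dimensional central simple $C$-algebra $RC$. Since $R$ and $RC$ satisfy the same differential identities (\cite[Theorem 2]{lee1992}), the hypothesis $\delta d(R)\subseteq Z(R)$ is equivalent to $\delta d(RC)\subseteq C$, and each candidate conclusion ($d=0$, $\delta=0$, $d([R,R])\subseteq Z(R)$, $\delta([R,R])\subseteq Z(R)$, or the relations in (iv)) can likewise be read off inside $RC$; moreover every X-inner derivation is of the form $\ad_a$ with $a\in RC$. I shall use throughout that $C=\big[[RC,RC],[RC,RC]\big]\subseteq[RC,RC]$ (Lemma \ref{lem22}(ii)) while $RC\ne[RC,RC]$, since $\dim_C RC/[RC,RC]=1$.

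For (i) and (iii) the idea is to strip off the X-outer derivation by Kharchenko's theorem. In (iii), write $\delta=\ad_b$ with $b\in RC$; then $\delta d(RC)\subseteq C$ says $\big[y,[b,d(x)]\big]=0$ for all $x,y$, and for fixed $y$ the map $u\mapsto[y,[b,u]]$ lies in $\mathfrak{L}(RC)$, so Proposition \ref{pro3}(i) (with $d$ X-outer) forces it to vanish identically; hence $[b,RC]\subseteq C$ and $b\in C$ by Lemma \ref{lem8}(ii), i.e.\ $\delta=0$. In (i), write $d=\ad_a$ and use $\delta\ad_a=\ad_{\delta(a)}+\ad_a\delta$, so the hypothesis becomes $\big[y,[\delta(a),x]\big]+\big[y,[a,\delta(x)]\big]=0$; Proposition \ref{pro3}(i) (now with $\delta$ X-outer) kills both summands, giving $[a,RC]\subseteq C$, $a\in C$, $d=0$. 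In (ii), write $d=\ad_a$, $\delta=\ad_b$; if $d\ne0$ then $a\notin Z(RC)=C$, and $\delta d(RC)\subseteq C$ reads $\big[b,[a,RC]\big]\subseteq C$, so applying Theorem \ref{thm28} to the exceptional prime ring $RC$ (with $n=2$, $I=RC$) gives $b\in[RC,RC]$, equivalently $\delta([R,R])\subseteq Z(R)$ by Lemma \ref{lem14}. All converses are immediate: $d=0$ or $\delta=0$ trivially yields $\delta d=0$, and if $b\in[RC,RC]$ then $\delta d(RC)=\big[b,[a,RC]\big]\subseteq\big[[RC,RC],[RC,RC]\big]\subseteq C$ by Lemma \ref{lem20}.

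The substance of the theorem is (iv), where the plan mirrors the proofs of Theorems \ref{thm34}(iv) and \ref{thm29}(iv) but, because the relevant identities now hold on all of $RC$ rather than on a proper Lie ideal, the normal form collapses. Assuming $\delta d(RC)\subseteq C$, I would first use Lemma \ref{lem15} to fix $\beta\in Z(R)$ with $d(\beta)\ne0\ne\delta(\beta)$, and then expand $\delta d(\beta z)$: since $\beta\in C$, $d(\beta),\delta(\beta),\delta d(\beta)\in C$ and $\delta d(z)\in C$, subtracting the central element $\beta\delta d(z)$ from the central value $\delta d(\beta z)$ leaves
\[
\eta(z)+\delta d(\beta)z\in C\quad(z\in RC),\qquad \eta:=\delta(\beta)d+d(\beta)\delta .
\]
If $\eta$ were X-outer, Skolem--Noether would give $\alpha\in C$ with $\eta(\alpha)\ne0$; substituting $\alpha z$ and subtracting $\alpha$ times the displayed relation yields $\eta(\alpha)z\in C$ for all $z$, hence $RC=C$, absurd. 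So $\eta=\ad_g$ with $g\in RC$ and $[g,z]+\delta d(\beta)z\in C$ for all $z$. Substituting $z^2$ and using that in characteristic $2$ one has $[g,z^2]=[g,z]z+z[g,z]=0$ (as $[g,z]\in\delta d(\beta)z+C$) gives $\delta d(\beta)z^2\in C$ for all $z$; were $\delta d(\beta)\ne0$ this forces $z^2\in C$ for all $z\in RC$, hence, linearising, $[z,w]=zw+wz\in C$ for all $z,w$; thus $[RC,RC]\subseteq C$, so $[RC,RC]=C$ and $\dim_C RC=2$, a contradiction. Therefore $\delta d(\beta)=0$, so $[g,RC]\subseteq C$ and $g\in C$ by Lemma \ref{lem8}(ii), whence $\eta=0$, i.e.\ $\delta(\beta)d=d(\beta)\delta$; since $d(\beta)\in C^{\times}$ this means $\delta=\lambda d$ with $\lambda=\delta(\beta)d(\beta)^{-1}$, so $\delta d(RC)\subseteq C$ becomes $d^2(RC)\subseteq C$. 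Finally $d^2$ is a derivation of $RC$, and $d^2(RC)\subseteq C$ forces $d^2([x,y])=[d^2(x),y]+[x,d^2(y)]=0$, hence $d^2([RC,RC])=0$ and in particular $d^2(C)=0$; so $d^2$ is inner by Skolem--Noether, say $d^2=\ad_h$, and since $[h,[RC,RC]]=0$ we get $h\in C$ by Lemma \ref{lem8}(i), i.e.\ $d^2=0$. The converse is immediate: if $\beta\in Z(R)$ has $d(\beta)\ne0\ne\delta(\beta)$, $\delta(\beta)d=d(\beta)\delta$ and $d^2=0$, then $\delta=\delta(\beta)d(\beta)^{-1}d$, so $\delta d=\delta(\beta)d(\beta)^{-1}d^2=0\subseteq Z(R)$.

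I expect the main obstacle to be part (iv): pinning down which pieces of the Theorem \ref{thm29}(iv) normal form survive when the invariant set is all of $RC$. Concretely, the delicate points are (a) showing $\ad_g$ degenerates — that is $\delta d(\beta)=0$, hence $g\in C$, hence $\eta=0$ and $\delta,d$ are $C$-dependent — which rests on the characteristic-$2$ identity $[g,z^2]=0$ together with $\dim_C RC=4$; and (b) upgrading $d^2(RC)\subseteq C$ to $d^2=0$, which uses $C\subseteq[RC,RC]$ (valid since $\operatorname{char}R=2$) and the Skolem--Noether theorem.
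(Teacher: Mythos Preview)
Your proof is correct. Parts (i)--(iii) are essentially identical to the paper's arguments: Kharchenko's separation (Proposition~\ref{pro3}(i)) in (i) and (iii), and Theorem~\ref{thm28} with Lemma~\ref{lem14} in (ii).

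Part (iv) is where you diverge from the paper, and your route is genuinely different and more self-contained. The paper first invokes Theorem~\ref{thm29}(iv) (which in turn rests on Theorem~\ref{thm34}(iv) and Lemma~\ref{lem18}) to obtain the full normal form $\delta(\beta)d+d(\beta)\delta=\ad_g$, $d^2=\mu d+\ad_h$, and then eliminates $g,h,\mu$ by a case split on $\mu$, each case handled by another application of Kharchenko's theorem. You bypass Theorem~\ref{thm29} entirely: after reaching $[g,z]+\delta d(\beta)z\in C$ on all of $RC$, your substitution $z\mapsto z^2$ together with the characteristic-$2$ identity $[g,z^2]=[g,z]z+z[g,z]=0$ forces $\delta d(\beta)=0$ directly, whence $g\in C$ and $\delta=\lambda d$; the endgame $d^2(RC)\subseteq C\Rightarrow d^2=0$ via Skolem--Noether is clean. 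What your approach buys is independence from the abelian-Lie-ideal machinery of \S7--8; what the paper's approach buys is a uniform picture, showing explicitly how the Type~II case arises as the degenerate limit ($g\in C$, $h\in C$, $\mu=0$) of the normal form already established for Type~I.
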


\begin{proof}
(i)\ Assume that $\delta d(R)\subseteq Z(R)$.
 Write $d=\text{\rm ad}_a$ for some $a\in RC$. Then
$$
\delta d(x)=\delta([a, x])=[\delta(a), x]+[a, \delta(x)]\in C
$$
for all $x\in R$. That is, $\ad_z\,\ad_{\delta(a)}+\ad_z\,\ad_a\, \delta=0$ for all $z\in RC$.
Since $\delta$ is X-outer, applying Proposition \ref{pro3} (i) we get $\ad_z\,\ad_a=0$ for all $z\in RC$, i.e.,
$\big[RC, [a, RC]\big]=0$. By Lemma \ref{lem8} (ii), $a\in C$, that is, $d=0$ follows.

(ii)\ Write $\delta=\text{\rm ad}_b$ and $d=\text{\rm ad}_a$ for some $a, b\in RC$.
Then
\begin{eqnarray*}
\delta d(R)\subseteq Z(R) &\Leftrightarrow& \big[b, [a, RC]\big]\subseteq C\\
                                                 &\Leftrightarrow&  \text{\rm either}\ a\in C\ \text{\rm or}\ b\in [RC, RC]\ (\text{\rm by Theorem \ref{thm28}})\\
                                                  &\Leftrightarrow& \text{\rm either}\ d=0\ \text{\rm or}\ \delta([R, R])\subseteq Z(R)\ (\text{\rm by Lemma \ref{lem14}}).
 \end{eqnarray*}

(iii)\ Write $\delta=\text{\rm ad}_b$ for some $b\in RC$.

``$\Rightarrow$":\ Assume that $\delta d(R)\subseteq Z(R)$, that is, $\big[b, d(x)\big]\in C$ for all $x\in R$ and hence for all $x\in RC$.
That is, $\ad_y\,\ad_bd=0$ for all $y\in RC$. In view of Proposition \ref{pro3} (i), $\ad_y\,\ad_b=0$ for all $y\in RC$. That is, $[y, [b, RC]]=0$ for all $y\in RC$, implying that $b\in C$ (see Lemma \ref{lem8} (ii)), i.e., $\delta=0$.
The part ``$\Leftarrow$" is trivial.

(iv)\ ``$\Rightarrow$":\ Assume that $\delta d(R)\subseteq Z(R)$. In particular, $\delta d([R, R])\subseteq Z(R)$. In view of Theorem \ref{thm29} (iv),
there exists $\beta\in Z(R)$ such that neither $d(\beta)=0$ nor $\delta(\beta)=0$, and
\begin{eqnarray}
\delta(\beta)d+d(\beta)\delta=\text{\rm ad}_g\ \text{\rm and}\ \ d^2=\mu d+ \text{\rm ad}_h
\label{eq:22}
\end{eqnarray}
for some $g\in RC$, $h\in [RC, RC]$ and $\mu\in C$ such that $\mu=0$ iff $g\in [RC, RC]$, and if $\mu\ne 0$ then
$g^2+\delta(\beta)\mu g\in C$.

Case 1:\ $\mu=0$. Then $d^2=\text{\rm ad}_h$ and $g\in [RC, RC]$. Let $x\in R$. Applying Eq.\eqref{eq:22}, we have
$
\delta(\beta)d^2(x)+d(\beta)\delta d(x)=[g, d(x)],
$
that is,
$
\delta(\beta)[h, x]+d(\beta)\delta d(x)=[g, d(x)].
$
Hence
$
[g, d(x)]+[\delta(\beta)h, x]=d(\beta)\delta d(x)\in C.
$
That is,
$$
\ad_z\ad_gd+\ad_z\,\ad_{\delta(\beta)h}=0
$$
for all $x, z\in RC$.
Since $d$ is X-outer, applying Proposition \ref{pro3} (i) we get
\begin{eqnarray}
\ad_z\ad_g=0\ \text{\rm and}\ \ \ad_z\,\ad_{\delta(\beta)h}=0
\label{eq:25}
\end{eqnarray}
for all $z\in RC$. The second part of Eq.\eqref{eq:25} implies that $h\in C$. Thus $d^2=\mu d+ \text{\rm ad}_h=0$.
The first part of Eq.\eqref{eq:25} implies that $[g, RC]\subseteq C$ and so $g\in C$. By Eq.\eqref{eq:22}, we get $\delta(\beta)d=d(\beta)\delta$.

Case 2:\ $\mu\ne 0$. We have
$
g^2+\delta(\beta)\mu g\in C.
$
Let $x\in RC$. By Eq.\eqref{eq:22}, we have
$$
\delta(\beta)d^2(x)+d(\beta)\delta d(x)=\big[g, d(x)\big],
$$
that is,
$
\delta(\beta)\big(\mu d(x)+[h, x]\big)+\big[g, d(x)\big]=d(\beta)\delta d(x)\in C,
$
i.e.,
\begin{eqnarray}
\ad_z \big(\delta(\beta)\mu+\ad_g\big)d+\ad_z\,\ad_{\delta(\beta)h}=0
\label{eq:29}
\end{eqnarray}
for all $z\in RC$.
Since $d$ is X-outer,
applying Proposition \ref{pro3} (i) to Eq.\eqref{eq:29}, we have
 \begin{eqnarray}
\ad_z \big(\delta(\beta)\mu+\ad_g\big)=0\ \text{\rm and}\ \ \ad_z\,\ad_{\delta(\beta)h}=0
\label{eq:27}
\end{eqnarray}
for all $z\in RC$. By Eq.\eqref{eq:27} we have $\delta(\beta)[h, RC]\subseteq C$, implying $h\in C$.
So, by Eq.\eqref{eq:22}, $d^2=\mu d$. By Eq.\eqref{eq:27} again, we have
\begin{eqnarray}
\delta(\beta)\mu x+\big[g, x\big]\in C.
\label{eq:28}
\end{eqnarray}
for all $x\in RC$.  Replacing $x$ by $x+g$ in Eq.\eqref{eq:28}, we have $\delta(\beta)\mu (x+g)+\big[g, x\big]\in C$ for all $x\in RC$.
Together with Eq.\eqref{eq:28} we conclude that $\delta(\beta)\mu g\in C$ and so $g\in C$. By  Eq.\eqref{eq:22}, we have
$\delta(\beta)d=d(\beta)\delta$. Hence, let $x\in RC$, we have
$$
\delta(\beta)\mu d(x)=\delta(\beta)d^2(x)=d(\beta)\delta d(x)\in C
$$
and so $\delta(\beta)\mu d(x)\in C$. That is, $\delta(\beta)\mu d(RC)\subseteq C$. This implies that $\mu=0$, a contradiction.

The part ``$\Leftarrow$" is trivial.
\end{proof}

\end{document}